        \newcommand{\blx@nowarnpolyglossia}{}
\pgfplotsset{compat=newest}
\newtheorem{theorem}{Theorem}[section]
\newtheorem{lemma}[theorem]{Lemma}
\theoremstyle{definition}
\newtheorem{definition}[theorem]{Definition}
\newtheorem{remark}[theorem]{Remark}
\DeclareMathOperator{\dom}{dom}
\newcommand\DOI[1]{\href{https://doi.org/#1}{\tt DOI:#1}}
\title{\Large\bfseries Approximate Sampling of Graphs with
Near-$P$-stable Degree Intervals\footnote{PLE, TRM, and IM were supported in part by the National Research, Development and
Innovation Office --- NKFIH grants SNN 135643 and K 132696.}}
\author[1]{Péter L.\ Erdős}
\author[1]{Tamás Róbert Mezei}
\author[1,2]{István Miklós}
\affil[1]{\small Alfréd Rényi Institute of Mathematics (LERN)\protect\\ Reáltanoda utca 13--15, H-1053 Budapest,
Hungary.\protect\\\texttt{<erdos.peter,
mezei.tamas.robert,miklos.istvan>@renyi.hu}\protect\\\ }
\affil[2]{\small Institute for Computer Science and Control, SZTAKI (LERN)\protect\\ Kende utca 13--17, H-1111 Budapest,
Hungary.}
\begin{document}
\maketitle

\begin{abstract}
    The approximate uniform sampling of graph realizations with a given degree
    sequence is an everyday task in several social science, computer science,
    engineering etc.\ projects. One approach is using Markov chains. The best
    available current result about the well-studied switch Markov chain is that
    it is rapidly mixing on $P$-stable degree sequences
    (see~\DOI{10.1016/j.ejc.2021.103421}). The switch Markov chain does not
    change any degree sequence. However, there are cases where degree intervals
    are specified rather than a single degree sequence. (A natural scenario
    where this problem arises is in hypothesis testing on social networks that
    are only partially observed.) \citeauthor{rechner_uniform_2018} introduced
    in \citeyear{rechner_uniform_2018} the notion of \emph{degree interval
    Markov chain} which uses three (separately well-studied) local
    operations (switch, hinge-flip and toggle), and employing on degree
    sequence realizations where any two sequences under scrutiny have very
    small coordinate-wise distance. Recently
    \citeauthor{amanatidis_approximate_2021} published a beautiful paper
    (\href{https://arxiv.org/abs/2110.09068}{\texttt{arXiv:2110.09068}}),
    showing that the degree interval Markov chain is rapidly mixing if the
    sequences are coming from a system of very thin intervals which are
    centered not far from a regular degree sequence. In this paper we extend
    substantially their result, showing that the degree interval Markov
    chain is rapidly mixing if the intervals are centred at $P$-stable
    degree sequences.

    \bigskip

    \noindent\textbf{Keywords}
		{
			\small
			degree sequences,
			realizations,
			switch Markov chain,
			rapidly mixing,
			Sinclair's multi-commodity flow method,
			P-stability, weak P-stability
		}
\end{abstract}

\section{Introduction}\label{sec:intro}

In this relatively short, highly technical paper we prove a substantial extension of a
recent result of \citet[Theorem 1.3]{amanatidis_approximate_2021}. Our proof is
based on the unified approach that was developed in~\cite{erdos_mixing_2022} for
$P$-stable degree sequences. For sake of brevity in this section we concisely describe
the problem itself, but we will not give a detailed description of the
background. For further details, the diligent reader is referred
to~\cite{amanatidis_approximate_2021,erdos_mixing_2022}.

\medskip

Approximate sampling graphs with given degree sequences play increasingly
important role in modelling different real-life dynamics. One basic way to study
them is the switch Markov chain method, made popular by
\citet{kannan_simple_1999}. The currently best result via this method
is~\cite{erdos_mixing_2022} where it is proved that the switch Markov chain is
rapidly mixing on $P$-stable degree sequences. The notion of $P$-stability was
introduced by \citet{jerrum_fast_1990} and studied for its own sake at first by
\citet{jerrum_when_1992}.

\medskip

In real-life applications it is not always possible to know the exact degree
sequence of the targeted network. For example a natural scenario where this
problem arises is in hypothesis testing on social networks that are only
partially observed. Therefore it can happen that we have to sample networks with
slightly different degree sequences. It is possible to study the situation via
Markov chain decompositions, where there is another Markov chain to move among
the component chains. A good example for this approach is the proof
of~\cite[Theorem 1.1]{amanatidis_approximate_2021}.

\medskip

Another possibility is to introduce further local operations, since the switch
operation itself does not change the degree sequence. Such operations are the
\emph{hinge flip} and the \emph{toggle} (the deletion-insertion) operations.
These two latter operations was introduced by Jerrum and Sinclair in their
seminal work about approximate 0{-}1
permanents~\cite{jerrum_approximating_1989}. (The number of perfect matchings of a bipartite
graph is equal to the permanent of the bipartite adjacency matrix.)
These three operations together are often applied in network building
applications in \emph{practice} (as it was pointed out
in~\cite{coolen_generating_2017}) but without any theoretical insurance for the
correct result.

\medskip

In~\citeyear{rechner_uniform_2018} \citet{rechner_uniform_2018} defined a Markov
chain with these three local operations for bipartite graphs. Amanatidis and
Kleer recognized in their important, recent
preprint~\cite{amanatidis_approximate_2021} the following very interesting fact:
assume that the inconsistencies in the degree sequences are never bigger than
one (the degrees can be $i$ or $i+1$) coordinate-wise, and the degree
intervals are placed close to a given constant $r$ (the interval placements can
vary between $[r-r^\alpha, r+r^\alpha]$ where alpha is at most $1/2$). The
authors coined the name \emph{near-regular degree intervals} for this degree
sequence property and the name  \emph{degree interval Markov chain} for this
whole setup. Their result is that the degree interval
Markov chain for near-regular degree intervals is rapidly mixing.

\medskip

Our main result (\Cref{thm:main}) is that this Markov chain is rapidly mixing
for such tight degree intervals where they are placed at $P$-stable degree
sequences. Since all degree sequences close to some constant are $P$-stable, but
$P$-stable degree sequences can be very far from regular sequences, our result
is clearly a very extensive generalization of the theorem of Amanatidis and
Kleer.

\medskip

To our great surprise, it turned out that this result can be derived from the
proof of the main theorem of~\cite{erdos_mixing_2022}. For that end we had to
analyse in detail the auxiliary structures of the proof and to extend to cover
this setup. The result of this analysis is the notion of \emph{precursor}
(\Cref{sec:precur}). In turn this notion is conducive to a rather short
proof of the rapidly mixing property. Therefore the main task in this paper is
to define the appropriate precursor.

\section{Definitions and Notation}\label{sec:def}

Many of the definitions in this section are extensions or generalizations of
notions introduced in~\cite{erdos_mixing_2022}. We will alert the reader
whenever this is the case.

\medskip

We consider $\mathbb{N}$ the set of non-negative integers. Let
$[n]=\{1,\ldots,n\}$ denote the integers from 1 to $n$, and let $\binom{[n]}{k}$
denote the set of $k$-element subsets of $[n]$. Given a subset $S\subseteq [n]$,
let $\mathds{1}_S: [n]\to \{0,1\}$ be the characteristic function of $S$, that
is, $\mathds{1}_S(s)=1\Leftrightarrow s\in S$. We often use $\uplus$ to
emphasize that a union of pairwise disjoint sets is taken. The graphs in this
paper are vertex-labelled and finite. Parallel edges and loops are forbidden,
and unless otherwise stated, the labelled vertex set of an $n$-vertex graph is
$[n]$. The line graph $L(G)$ of a graph $G$ is a graph on the vertex set $E(G)$
(so the vertices of $L(G)$ are taken from $\binom{[n]}{2}$), where any two edges
$e,f\in E(G)$ that are adjacent are joined (by an edge). The line graph is also
free of parallel edges and loops. A trail is a walk that does not visit any edge
twice. An open trail starts and ends on two distinct vertices. A closed trail
does not have a start nor an end vertex. Given a matrix $M\in
\mathbb{Z}^{n\times n}$, its $\ell_1$-norm is $\|M\|_1=\sum_{ij}|M_{ij}|$.

\begin{definition}
Given two graphs on $[n]$ as vertices, say, $X=([n],E(X))$ and $Y=([n],E(Y))$,
we define their symmetric difference graph
\begin{equation*}
	X\triangle Y=([n],E(X)\triangle E(Y)).
\end{equation*}
\end{definition}

\begin{definition}
Given a set of edges $R\subseteq \binom{[n]}{2}$, we may treat $R$ as a graph.
If $X$ is a graph on the vertex set $[n]$, let
\begin{equation*}
	X\triangle R=([n],E(X)\triangle R).
\end{equation*}
\end{definition}

\begin{definition}
    A \emph{degree sequence} on $n$ vertices is a vector $d\in \mathbb{N}^n$
    which is coordinate-wise at most $n-1$. The set
    of \emph{realizations} of $d$ denotes the following set of graphs:
    \begin{equation*}
        \mathcal{G}(d)=\left\{ G\ \Big|\ V(G)=[n],\ \deg_G(i)=d_i\ \forall
        i\in[n]\right\}.
    \end{equation*}
    The degree sequence $d$ is \emph{graphic} if $\mathcal{G}(d)$ is non-empty.
    A set of degree sequences $\mathcal{D}$ may contain graphic as well as
    non-graphic degree sequences.
\end{definition}

\begin{definition}
For a pair of vectors $\ell,u\in \mathbb{N}^n$ we write $\ell\le u$ if and only
if $\ell$ is coordinate-wise less than or equal to $u$, that is,
$\ell_i\le u_i$ for all $i\in [n]$. Furthermore, let
\begin{equation*}
    [\ell,u]=\left\{ d\in \mathbb{N}^n\ |\ \ell\le d\le u\right\}.
\end{equation*}
\end{definition}

\begin{definition}
    If $\ell\le u$ are both degree sequences of length $n$, then $[\ell,u]$ is a
    \emph{degree sequence interval}.
    A degree sequence interval $[\ell,u]$ is called \emph{thin} if $u_i\le
    \ell_i+1$ for all $i\in [n]$. We denote the set of \emph{realizations} of the degree
    sequence interval $[\ell,u]$ by
    \begin{equation*}
	    \mathcal{G}(\ell,u)=\bigcup_{d\in [\ell,u]}\mathcal{G}(d).
    \end{equation*}
\end{definition}
\begin{remark}
    Not every degree sequence in $[\ell,u]$ is necessarily graphic, even if both
    $\ell$ and $u$ are graphic.
\end{remark}

\begin{definition}\label{def:pstable}
    Given a polynomial $p\in\mathbb{R}[x]$, we say that a degree sequence $d\in \mathbb{N}^n$ is
    \emph{$p$-stable} if
    \begin{equation*}
        \Big|\mathcal{G}(d)\cup\bigcup_{\{i,j\}\in
	\binom{[n]}{2}}\mathcal{G}(d+\mathds{1}_{\{i,j\}})\Big|\le p(n)\cdot
        |\mathcal{G}(d)|.
    \end{equation*}
\end{definition}

\begin{definition}
    A set of degree sequences $\mathcal{D}$ is $p$-stable if every
    degree sequence $d\in \mathcal{D}$ is $p$-stable.
\end{definition}
\begin{definition}
    A set of degree sequences $\mathcal{D}$ is $P$-stable if there exists $p\in
    \mathbb{R}[x]$ such that $\mathcal{D}$ is $p$-stable.
\end{definition}
In~\cite{erdos_mixing_2022}, only $P$-stability is defined, but in this paper it is more
convenient to also define $p$-stability.
\begin{remark}
A finite set of degree sequences $\mathcal{D}$ is always $P$-stable.
\end{remark}

Let us introduce a weaker stability notion for degree sequence intervals.
\begin{definition}\label{def:wpstable}
    Given $p\in\mathbb{R}[x]$, we say that a degree sequence interval
    $[\ell,u]\subseteq \mathbb{N}^n$ is \emph{weakly $p$-stable} if
    \begin{equation}\label{eq:wpstable}
        \Big|\bigcup_{\{i,j\}\in
        \binom{[n]}{2}}\mathcal{G}(\ell,u+\mathds{1}_{\{i,j\}})\Big|\le p(n)\cdot
        |\mathcal{G}(\ell,u)|.
    \end{equation}
\end{definition}
\begin{definition}
    A set $\mathcal{I}$ of degree sequence intervals is weakly $P$-stable if there exists $p\in
    \mathbb{R}[x]$ such that every $[\ell,u]\in\mathcal{I}$ is weakly
    $p$-stable. (Any finite $\mathcal{I}$ is weakly $P$-stable.)
\end{definition}
\begin{remark}\label{rem:pstable}
    If the set of degree sequences $[\ell,u]$ is $p$-stable, then $[\ell,u]$ is weakly $p$-stable.
\end{remark}
\begin{remark}\label{rem:notpstable}
    It is possible indeed that $[\ell,u]$ is
    weakly $p$-stable, but $[\ell,u]$ (as a set of degree sequences) is not
    $p$-stable. For example, take $\ell=(0)_{i=1}^n$ and $u={(n-1)}_{i=1}^n$: the
    interval $(\ell,u)$ is clearly $1$-stable, but most of the degree
    sequences on $n$ vertices are not $1$-stable.
\end{remark}

\begin{definition}[Degree interval Markov chain]
    Let us define the degree interval Markov chain $\mathbb{G}(\ell,u)$. The state
    space of the Markov-chain is $\mathcal{G}(\ell,u)$.  In the following we
    define three types of transitions: \emph{switches}, \emph{hinge-flips}, and
    \emph{edge-toggles}. If the current state of the Markov chain is
    $G\in\mathcal{G}(\ell,u)$, then
    \begin{itemize}
	    \item with probability $1/2$, the chain stays in $G$ (the Markov chain
            is lazy),
        \item with probability $1/6$, pick 4 vertices $a,b,c,d$ (uniformly and
            randomly), and the Markov chain changes its state to $G'=G\triangle
            \{ab,cd,ac,bd\}$ if $\deg_{G'}=\deg_G$ (then this is a \emph{switch}),
            otherwise the chain stays in $G$,
        \item with probability $1/6$, pick 3 vertices $a,b,c$ (uniformly and
            randomly), and the Markov chain changes its state to $G''=G\triangle
            \{ab,bc\}$ if $e(G'')=e(G)$ and $G''\in\mathcal{G}(\ell,u)$ (a
            \emph{hinge-flip}), otherwise the chain stays in $G$,
        \item with probability $1/6$, pick a pair of vertices $a,b$ (uniformly
            and randomly), and the Markov chain changes its state to $G'''=G\triangle
            \{ab\}$ if $G'''\in\mathcal{G}(\ell,u)$ (an \emph{edge-toggle}),
            otherwise the chain stays in $G$.
    \end{itemize}
\end{definition}

\begin{figure}[H]
     \centering
     \begin{subfigure}[b]{0.32\textwidth}
         \centering
	 \begin{tikzpicture}
		 \begin{scope}[local bounding box=G1]
			\fill[black!15, very thin] (0.5,-0.5) circle (1.1cm);
			\graph [math nodes,nodes={draw, circle, inner sep=0pt, minimum size=14pt}, grid placement]
			{ subgraph I_n [V={a,b,c,d}, wrap after=2],
				a--b, c--d, a--[densely dashed] c, b--[densely dashed] d};
		\end{scope}
		\begin{scope}[shift={(2.7,0)},local bounding box=G2]
			\fill[black!15, very thin] (0.5,-0.5) circle (1.1cm);
			\graph [math nodes,nodes={draw, circle, inner sep=0pt, minimum size=14pt}, grid placement] {
			subgraph I_n [V={a,b,c,d},wrap after=2], a--[densely dashed]b,
		c--[densely dashed]d, a--c, b--d };
		\end{scope}
		\draw[<->] (G1)--(G2);
	 \end{tikzpicture}
         \caption{Switch}
	 \label{fig:switch}
     \end{subfigure}
     \hfill
     \begin{subfigure}[b]{0.32\textwidth}
         \centering
	 \begin{tikzpicture}
		 \begin{scope}[local bounding box=G1]
			\fill[black!15, very thin] (0.5,-0.5) circle (1.1cm);
			\graph [math nodes,nodes={draw, circle, inner sep=0pt, minimum size=14pt}]
			{ b--a, b--[densely dashed] c };
		\end{scope}
		\begin{scope}[shift={(2.7,0)},local bounding box=G2]
			\fill[black!15, very thin] (0.5,-0.5) circle (1.1cm);
			\graph [math nodes,nodes={draw, circle, inner sep=0pt, minimum size=14pt}]
			{ b--[densely dashed] a, b--c };
		\end{scope}
		\draw[<->] (G1)--(G2);
	 \end{tikzpicture}
         \caption{Hinge-flip}
	 \label{fig:hingeflip}
     \end{subfigure}
     \hfill
     \begin{subfigure}[b]{0.32\textwidth}
         \centering
	 \begin{tikzpicture}
		 \begin{scope}[local bounding box=G1]
			\fill[black!15, very thin] (0.0,-0.5) circle (1.1cm);
			\graph [math nodes,nodes={draw, circle, inner sep=0pt, minimum
			size=14pt}, grid placement] { a--b };
		\end{scope}
		\begin{scope}[shift={(2.7,0)},local bounding box=G2]
			\fill[black!15, very thin] (0.0,-0.5) circle (1.1cm);
			\graph [math nodes,nodes={draw, circle, inner sep=0pt, minimum
			size=14pt}, grid placement] { a--[densely dashed] b };
		\end{scope}
		\draw[<->] (G1)--(G2);
	 \end{tikzpicture}
         \caption{Edge-toggle}
	 \label{fig:edgetoggle}
     \end{subfigure}
     \caption{The three types of operations employed by the degree interval Markov chain.
	Solid~(\raisebox{0.3em}{\protect\tikz{\protect\draw(0,0) -- (0.5,0);}}) and
	dashed~(\raisebox{0.3em}{\protect\tikz{\protect\draw[densely dashed](0,0) -- (0.5,0);}})
	line segments represent edges and non-edges,
	respectively.}\label{fig:operations}
\end{figure}
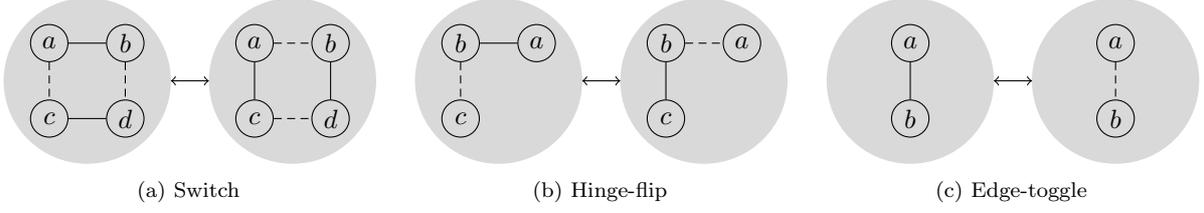

\medskip

We will use the following seminal result of \citeauthor{sinclair_improved_1992}.
Let $\Pr_\mathbb{G}(x\to y)$ denote the transition probability from state $x$ to
$y$ in the Markov chain $\mathbb{G}$.
\begin{theorem}[adapted from {\citet[Proposition 1 and
    Corollary~6']{sinclair_improved_1992}}]\label{thm:sinclair}
	Let $\mathbb{G}$ be an irreducible, symmetric, reversible,
	and lazy Markov chain. Let $f$ be a multicommodity-flow on $\mathbb{G}$
    which sends $\sigma(X)\sigma(Y)$ commodity between any ordered pair $X,Y\in V(\mathbb{G})$,
    where $\sigma\equiv |V(\mathbb{G})|^{-1}$ is the unique stationary distribution on $\mathbb{G}$.
	Then the mixing time of the Markov chain in which it converges
	$\varepsilon$ close in $\ell_1$-norm to $\sigma$ started from any $V(\mathbb{G})$ is
	\begin{equation}\label{eq:sinclair}
		\tau_\mathbb{G}(\varepsilon)\le \rho(f)\cdot\ell(f)\cdot
		\left( \log |\mathbb{G}|-\log \varepsilon\right),
	\end{equation}
	where $\ell(f)$ is the length of the longest path with positive flow and
    $\rho(f)$ is the maximum loading through an oriented edge of the Markov graph
    \begin{equation}\label{eq:rho}
        \rho(f)=\max_{xy\in E(\mathbb{G})}\frac{1}{\sigma(x)\Pr_\mathbb{G}(x\to
        y)}\sum_{xy\in \gamma\in
        \Gamma_{X,Y}}f(\gamma),
    \end{equation}
    where $\Gamma_{X,Y}$ is the set of all simple directed paths from $X$ to
    $Y$ in $\mathbb{G}$.
\end{theorem}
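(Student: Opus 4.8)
The plan is to obtain the statement as a direct specialization of the multicommodity-flow method of \citet{sinclair_improved_1992} to symmetric chains, so the real work is checking that our normalizations match his. First I would record the consequences of the hypotheses. Because the transition matrix $P$ of $\mathbb{G}$ is symmetric, the uniform measure $\sigma$ is stationary; since $\mathbb{G}$ is irreducible and lazy it is aperiodic, so $\sigma$ is the unique stationary distribution and $\min_x\sigma(x)^{-1}=|\mathbb{G}|$. Consequently the factor $\log|\mathbb{G}|-\log\varepsilon$ in \eqref{eq:sinclair} is exactly Sinclair's $\log\pi_{\min}^{-1}+\log\varepsilon^{-1}$. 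Laziness also controls the spectrum from below: $2P-I$ is symmetric (as $P$ is), stochastic, and entrywise non-negative — its diagonal entries $2P_{xx}-1$ are non-negative because the holding probability $P_{xx}$ is at least $1/2$ — so its eigenvalues lie in $[-1,1]$, whence those of $P=\tfrac12\bigl(I+(2P-I)\bigr)$ lie in $[0,1]$; in particular the second-largest eigenvalue in absolute value is the second-largest eigenvalue $\lambda_1$.

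Next I would invoke the Poincaré inequality underlying Sinclair's Proposition~1. For every function $\phi$ on $V(\mathbb{G})$ one has
\[
  \operatorname{Var}_\sigma(\phi)\ \le\ \bar\rho(f)\cdot\mathcal{E}(\phi,\phi),
  \qquad
  \bar\rho(f)=\max_{xy\in E(\mathbb{G})}\frac{1}{\sigma(x)\Pr_\mathbb{G}(x\to y)}\sum_{xy\in\gamma\in\Gamma_{X,Y}}f(\gamma)\,|\gamma|,
\]
where $\mathcal{E}$ is the Dirichlet form of $\mathbb{G}$. The argument is the usual telescoping-plus-Cauchy--Schwarz: along each flow path $\gamma$ from $X$ to $Y$ write $\phi(Y)-\phi(X)=\sum_{e\in\gamma}\bigl(\phi(e^{+})-\phi(e^{-})\bigr)$, weight the summands by $|\gamma|$, apply Cauchy--Schwarz, multiply by $f(\gamma)$, and sum over all $\gamma\in\Gamma_{X,Y}$ and all ordered pairs $X,Y$, using that the total commodity routed between $X$ and $Y$ is $\sigma(X)\sigma(Y)$. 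This yields $\lambda_1\le 1-\bar\rho(f)^{-1}$.

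Finally I would combine this with the standard spectral mixing-time estimate $\tau_\mathbb{G}(\varepsilon)\le (1-\lambda_{\max})^{-1}\bigl(\log\pi_{\min}^{-1}+\log\varepsilon^{-1}\bigr)$, replace $\lambda_{\max}$ by $\lambda_1$ using laziness, and substitute $\lambda_1\le 1-\bar\rho(f)^{-1}$ to get $\tau_\mathbb{G}(\varepsilon)\le\bar\rho(f)\bigl(\log|\mathbb{G}|-\log\varepsilon\bigr)$. The passage to the form stated in \eqref{eq:sinclair} is then immediate: every $\gamma$ carrying positive flow has $|\gamma|\le\ell(f)$, so $\bar\rho(f)\le\ell(f)\,\rho(f)$ with $\rho(f)$ as in \eqref{eq:rho} — this is precisely the step that turns Sinclair's Proposition~1 into his Corollary~6'. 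I do not expect any genuine mathematical difficulty here, since the content lives entirely in \cite{sinclair_improved_1992}; the only thing that requires care is bookkeeping — matching our flow normalization (commodity $\sigma(X)\sigma(Y)$ per \emph{ordered} pair), our edge-loading normalization (dividing by the ergodic flow $\sigma(x)\Pr_\mathbb{G}(x\to y)$), and our orientation convention for edges of the Markov graph against Sinclair's. Reversibility, which gives $\sigma(x)\Pr_\mathbb{G}(x\to y)=\sigma(y)\Pr_\mathbb{G}(y\to x)$, makes the directed/undirected ambiguity harmless.
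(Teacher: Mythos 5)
The paper does not actually prove \Cref{thm:sinclair}; it is stated as an adaptation of \citeauthor{sinclair_improved_1992}'s Proposition~1 and Corollary~6$'$ and is used as a black box. So there is no paper proof to compare your argument against — what you have done is reconstruct Sinclair's argument from scratch and verify that the stated normalizations fall out correctly.

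Your reconstruction is mathematically sound and hits all the right points: symmetry forces the uniform stationary $\sigma$; laziness pins the spectrum of $P=\tfrac12\bigl(I+(2P-I)\bigr)$ into $[0,1]$ (your observation that $2P-I$ is a symmetric stochastic matrix with non-negative diagonal is exactly the clean way to see this); the telescoping-plus-Cauchy--Schwarz Poincaré bound gives $\lambda_1\le 1-\bar\rho(f)^{-1}$; the standard spectral estimate and the inequality $\bar\rho(f)\le\ell(f)\,\rho(f)$ then yield \cref{eq:sinclair}. You also correctly flag the only subtle point, namely matching the ordered-pair flow normalization and the ergodic-flow normalization in $\rho(f)$ to Sinclair's conventions, and you correctly note that reversibility makes the orientation ambiguity harmless. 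This is a faithful and complete derivation of the cited result, though of course for the purposes of this paper it would suffice to cite Sinclair directly, as the authors do.
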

One of the most famous applications of this idea is the result of \citet{jerrum_approximating_1989} providing a probabilistic approximation of the permanent. The following result also relies on \Cref{thm:sinclair}, and it
describes the largest known class of degree sequences where the switch Markov
chain is rapidly mixing (that is, the rate of convergence of the Markov chain is
bounded by a polynomial of the length of the degree sequence).
\begin{theorem}[\cite{erdos_mixing_2022}]\label{thm:switch}
	The switch Markov chain is rapidly mixing on the realizations of any degree
    sequence in a set of $P$-stable degree sequences (the rate of convergence
    depends on the set).
\end{theorem}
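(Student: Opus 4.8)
\emph{Proof idea.} The plan is to apply \Cref{thm:sinclair} to the switch Markov chain $\mathbb{G}$ on $\mathcal{G}(d)$. Its hypotheses are easy to check: the chain is lazy and symmetric by construction (a switch is undone by a switch, and the four vertices are chosen uniformly), hence reversible with the uniform stationary distribution $\sigma\equiv|\mathcal{G}(d)|^{-1}$; and it is irreducible because any two realizations of a fixed degree sequence can be transformed into one another by switches, a classical fact. By \eqref{eq:sinclair} and $\log|\mathcal{G}(d)|\le\binom{n}{2}$, it therefore suffices to construct a multicommodity flow $f$ routing $\sigma(X)\sigma(Y)$ units between each ordered pair $X,Y\in\mathcal{G}(d)$ with $\ell(f)$ and $\rho(f)$ both bounded by a polynomial in $n$ that depends only on the $P$-stability polynomial $p$ of the underlying set of degree sequences.

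To build $f$, fix a linear order on $[n]$, inducing an order on $\binom{[n]}{2}$ and hence a deterministic procedure for decomposing symmetric differences. For ordered $X,Y\in\mathcal{G}(d)$ the graph $X\triangle Y$ is an edge-disjoint union of closed trails alternating between $E(X)\setminus E(Y)$ and $E(Y)\setminus E(X)$; decompose it by a fixed rule into circuits $W_1\prec\dots\prec W_k$ (if the construction requires, split the $\sigma(X)\sigma(Y)$ units equally among polynomially many canonical sweep parameters). The canonical path from $X$ to $Y$ processes the circuits one at a time: processing $W_i$ means a \emph{sweep}, a sequence of switches that walks along $W_i$ replacing its $X$-edges by its $Y$-edges while every intermediate graph stays in $\mathcal{G}(d)$; when the prescribed switch is illegal because it would create a parallel edge, one substitutes a bounded-length detour of switches. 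Since $|E(X)\triangle E(Y)|=O(n^2)$ and each switch makes bounded progress along the current circuit, every flow-carrying path has length $O(n^2)$, so $\ell(f)=O(n^2)$.

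The substance of the proof is the congestion bound. Fix an oriented edge $(Z,Z')$ of $\mathbb{G}$. Each flow-carrying path through $(Z,Z')$ belongs to a triple $(X,Y,\psi)$, where $\psi$ records the decomposition and sweep parameters, and carries at most $\sigma(X)\sigma(Y)=|\mathcal{G}(d)|^{-2}$ units; together with $\Pr_\mathbb{G}(Z\to Z')=\Omega(n^{-4})$ and $\sigma(Z)=|\mathcal{G}(d)|^{-1}$ this gives $\rho(f)\le\mathrm{poly}(n)$ as soon as the number of such triples is at most $\mathrm{poly}(n)\cdot|\mathcal{G}(d)|$. To count them, associate to $(X,Y,\psi)$ the graph $W=(X\triangle Y)\triangle Z=(X\triangle Y)\triangle(X\triangle Z)$. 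The processed part $X\triangle Z$ consists of completed circuits together with an initial segment of the circuit currently being swept, so $W$ is the disjoint union of the not-yet-processed circuits and the uncompleted tail of the current one, the latter being an open alternating trail. Hence $\deg_W$ equals $d$ except at the (at most two) endpoints of that trail, where it may differ by one, and after an $O(1)$-edge correction depending on $(Z,Z')$ and the sweep position, $W$ becomes a realization either of $d$ or of $d+\mathds{1}_{\{i,j\}}$ for some $\{i,j\}\in\binom{[n]}{2}$.

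The key combinatorial claim --- and the step I expect to be the main obstacle --- is that the assignment $(X,Y,\psi)\mapsto W$ is, for fixed $(Z,Z')$, at most $\mathrm{poly}(n)$-to-one: from $Z$, $Z'$, $W$, and $O(\log n)$ further bits recording which circuit is being swept, the position within the sweep, the detour branch in effect, and the $O(1)$ correction edges, one must recover $X\triangle Y$ (whose circuit structure must in turn be recoverable from the edge set via the fixed decomposition rule), then the processed part $X\triangle Z$, and finally $X$ and $Y$ themselves. Granting this, the number of triples through $(Z,Z')$ is at most $\mathrm{poly}(n)$ times $\big|\mathcal{G}(d)\cup\bigcup_{\{i,j\}}\mathcal{G}(d+\mathds{1}_{\{i,j\}})\big|$, which by $p$-stability (\Cref{def:pstable}) is at most $\mathrm{poly}(n)\cdot p(n)\cdot|\mathcal{G}(d)|$, giving $\rho(f)=\mathrm{poly}(n)$. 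Designing the canonical decomposition, the sweep order, and the detour rules so that this reconstruction succeeds using only logarithmically many auxiliary bits is where essentially all of the difficulty lies; once it is in place, \Cref{thm:sinclair} yields $\tau_\mathbb{G}(\varepsilon)\le\mathrm{poly}(n)\cdot(n^2+\log\varepsilon^{-1})$, i.e.\ rapid mixing.
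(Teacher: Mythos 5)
Your outline traces the canonical-path-plus-encoding framework that underlies~\cite{erdos_mixing_2022}: decompose $X\triangle Y$ into alternating circuits, route the commodity along \textsc{Sweep} sequences, bound congestion via an almost-injective encoding into an auxiliary graph, and invoke $P$-stability to control the size of the encoding's image. That is also, after repackaging, exactly what the present paper does when it re-derives \Cref{thm:switch} from \Cref{lemma:Cthin}, \Cref{lemma:extension}, \Cref{rem:pstable} and \Cref{thm:precursor}, so the scheme is right. Two remarks, one minor and one major.

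The minor one: the auxiliary structure you want is the matrix $\widehat M(X,Y,Z)=A_X+A_Y-A_Z\in\{-1,0,1,2\}^{n\times n}$ of \Cref{def:widehatM}, not the symmetric-difference graph $(X\triangle Y)\triangle Z$. Your identity ``$(X\triangle Y)\triangle Z=(X\triangle Y)\triangle(X\triangle Z)$'' is false: the right-hand side equals $Y\triangle Z$. The reason the matrix is the correct object is that its row sums are $\deg_X+\deg_Y-\deg_Z=d$ on the nose, while the bounded number of $+2$ and $-1$ entries (coming from the $O(1)$ detour edges $R_Z$ of the sweep, i.e.\ edges of $E(X)\triangle E(Z)$ outside $\nabla_{X,Y}$) is exactly what the notion of $c$-tightness (\Cref{lemma:removeplus2s}) and the counting bound \Cref{lemma:ctight} are built to absorb, so that the number of reachable auxiliary matrices is at most $n^{O(1)}p(n)\,|\mathcal{G}(d)|$.

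The major one is the gap you yourself flag. The claim that $(X,Y,\psi)\mapsto \widehat M(X,Y,Z)$ is $n^{O(1)}$-to-one given the transition $(Z,Z')$ is not a routine consequence of the setup; it is the entire technical content of~\cite{erdos_mixing_2022}. One must design the circuit decomposition, the sweep order, the cornerstone choice, and the detour rules so that simultaneously (i) every intermediate $Z$ keeps $\widehat M$ $O(1)$-tight, (ii) the alternating trail decomposition is reconstructible from $Z$, $\nabla_{X,Y}$, a slightly perturbed pairing function, and a bounded parameter tuple $B$, and (iii) the number of distinct tuples is $n^{O(1)}$. This is precisely what the paper formalizes as the \emph{precursor} (\Cref{def:precursor}, properties~\ref{prop:ctight}--\ref{prop:Psi}) and then verifies in \Cref{lemma:Cthin} by importing the relevant lemmas of~\cite{erdos_mixing_2022}. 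Until a construction with those properties is exhibited, your sketch establishes the reduction but not the theorem.
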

There are several known $P$-stable regions, one of the earliest and most
well-known ones is the following.
\begin{theorem}[\citet{jerrum_when_1992}]\label{thm:JMS}
	The set of degree sequences $d$ satisfying
	\begin{equation}\label{eq:JMS}
		{(\Delta-\delta+1)}^2\le 4\delta(n-\Delta+1),\ d\in \mathbb{N}^n
	\end{equation}
	for any $n$ are $P$-stable. (See \Cref{fig:JMS}.)
\end{theorem}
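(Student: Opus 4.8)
The plan is to reduce \Cref{thm:JMS} to a single structural statement about repairing one ``defective'' realization. Fix a graphic $d$ satisfying~\eqref{eq:JMS}; the statement is to be understood over graphic sequences only, since for a non-graphic member of the family $\mathcal G(d)=\emptyset$ (for instance $(2,2,4,4,4)$ satisfies~\eqref{eq:JMS} and is not graphic, even though $(3,3,4,4,4)=(2,2,4,4,4)+\mathds{1}_{\{1,2\}}$ is realized by $K_5$ minus an edge). It suffices to exhibit one universal polynomial $q$ with $|\mathcal G(d+\mathds{1}_{\{i,j\}})|\le q(n)\,|\mathcal G(d)|$ for every $\{i,j\}\in\binom{[n]}{2}$: summing over the $\binom n2$ pairs then yields $p$-stability with $p(n)=1+\binom n2 q(n)$, uniformly over the whole family. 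Fix $\{i,j\}$ and $G'\in\mathcal G(d+\mathds{1}_{\{i,j\}})$, so that $\deg_{G'}$ exceeds $d$ by exactly one at $i$ and at $j$ and agrees with $d$ elsewhere. Call $i=v_0,v_1,\dots,v_{2m+1}=j$ an \emph{alternating $i$--$j$ trail} if it repeats no edge of $\binom{[n]}{2}$ and $v_{t-1}v_t\in E(G')$ exactly when $t$ is odd. Flipping $G'$ along such a trail (delete its $G'$-edges, insert its $G'$-non-edges) produces a graph in $\mathcal G(d)$: the endpoints $i$ and $j$ each lose one edge, while every interior vertex gains as many edges as it loses. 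From the resulting graph together with the ordered tuple of interior vertices of the trail one recovers $G'$ by flipping back, so if every $G'$ admits such a trail of length at most some absolute constant $L$, this map is injective and $|\mathcal G(d+\mathds{1}_{\{i,j\}})|\le L\,n^{L}\,|\mathcal G(d)|$, which is the desired $q$.

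Everything thus reduces to the structural claim: \emph{there is an absolute constant $L$ such that whenever $d$ is graphic and satisfies~\eqref{eq:JMS}, every realization $G'$ of $d+\mathds{1}_{\{i,j\}}$ has an alternating $i$--$j$ trail of length $\le L$.} I would prove this by cases. If $ij\in E(G')$, take the trail of length $1$. If not, but some $x\in N_{G'}(i)$, $y\in N_{G'}(j)$ satisfy $x\ne y$ and $xy\notin E(G')$, then $i,x,y,j$ is a trail of length $3$ --- equivalently, a single switch that creates the edge $ij$, followed by its deletion. Both options fail only in the \emph{saturated case}: $ij\notin E(G')$ and every distinct pair in $N_{G'}(i)\times N_{G'}(j)$ is a $G'$-edge. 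There one extracts forced degree bounds: every common neighbour of $i$ and $j$ is adjacent to $i$, to $j$, and to all other vertices of $N_{G'}(i)\cup N_{G'}(j)$, hence has degree at least $d_i+d_j+3-|N_{G'}(i)\cap N_{G'}(j)|$; combined with $|N_{G'}(i)\cap N_{G'}(j)|\le\min(d_i,d_j)+1$ this forces $\max(d_i,d_j)\le\Delta-2$, and likewise every vertex of $N_{G'}(i)\triangle N_{G'}(j)$ has degree within $2$ of $\Delta$. Feeding these, the trivial bound $|N_{G'}(i)\cup N_{G'}(j)\cup\{i,j\}|\le n$, and $\delta\le d_i,d_j$ into~\eqref{eq:JMS} --- rewritten as $n-\Delta+1\ge (\Delta-\delta+1)^2/(4\delta)$ --- one finds that either the saturated case cannot occur for graphic $d$ (the forced subgraph would pin so many degrees at $\Delta$ that $d$ could not be graphic, exactly as in the $K_5$-minus-an-edge example above), or there is a reservoir of $\Omega(n-\Delta)$ vertices outside $N_{G'}(i)\cup N_{G'}(j)$ that are far from saturated; routing a trail $i,x,z_1,z_2,y,j$ of length $5$ --- plus one extra detour if a lone reservoir vertex is awkwardly placed --- through this reservoir then completes the proof.

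The main obstacle is precisely this saturated case, and within it the regime where $i$ and $j$ have almost equal, heavily overlapping $G'$-neighbourhoods while $\Delta$ sits near the top of its admissible band $\delta+\Theta(n)$: there the length-$3$ switch is genuinely unavailable, and one must verify carefully that the quadratic margin encoded by~\eqref{eq:JMS} always leaves enough uncommitted vertices and edges to route a trail of \emph{bounded} (not merely finite) length, i.e.\ that no reservoir vertex can be so entangled with $N_{G'}(i)$ and $N_{G'}(j)$ that the trail is forced past a fixed constant. The remaining ingredients --- injectivity of the witness map, the reduction of the first paragraph, and the elementary degree counting --- are routine.
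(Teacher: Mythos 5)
The paper does not prove \Cref{thm:JMS}; it is imported verbatim from \citet{jerrum_when_1992} and used only for the numerical check that the Amanatidis--Kleer regime falls inside \cref{eq:JMS}. So there is no ``paper's own proof'' to compare against, and the question is whether your argument stands on its own.

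Your opening reduction is correct, and the remark that the statement must be read over graphic $d$ is on point: under this paper's \Cref{def:pstable}, a non-graphic $d$ is $p$-stable only if every $d+\mathds{1}_{\{i,j\}}$ is also non-graphic, and your $(2,2,4,4,4)$ example shows this can fail inside the JMS family. The route you then take --- an alternating $i$--$j$ repair trail of universally bounded length for every $G'\in\mathcal G(d+\mathds{1}_{\{i,j\}})$, with an injection into $\mathcal G(d)\times\{\text{short trails}\}$ --- is a proof of \emph{strong stability}, the stronger notion later isolated by Amanatidis and Kleer; it is not the counting argument of~\cite{jerrum_when_1992}. That is a legitimate alternative (the JMS class is indeed strongly stable), but it concentrates the whole burden on the structural claim, and your treatment of the saturated case is where the proposal fails.

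Two concrete problems there. First, an error: for $x\in N_{G'}(i)\setminus N_{G'}(j)$, saturation only gives that $x$ is adjacent to $i$ and to all of $N_{G'}(j)$, hence $\deg_{G'}(x)\ge d_j+2\ge\delta+2$. That is not ``within $2$ of $\Delta$''; in the extremal regime of \cref{eq:JMS}, where $\Delta-\delta$ is on the order of $\sqrt{\delta n}$, it is nowhere near $\Delta$, so the forced-degree picture you invoke (``pin so many degrees at $\Delta$ that $d$ could not be graphic'') does not get off the ground. Second, a genuine gap: the routing step is not argued. For a length-$5$ trail $i,x,z_1,z_2,y,j$ one needs $z_1z_2\in E(G')$ with $xz_1\notin E(G')$ and $z_2y\notin E(G')$; saturation forces $z_1\notin N_{G'}(j)$ and $z_2\notin N_{G'}(i)$, and a short edge count shows \cref{eq:JMS} by itself does not preclude the reservoir $R=[n]\setminus\bigl(N_{G'}(i)\cup N_{G'}(j)\cup\{i,j\}\bigr)$ from being an independent set whose few edges all land in $N_{G'}(i)\cup N_{G'}(j)$, so the ``$\Omega(n-\Delta)$ reservoir vertices'' heuristic does not immediately yield a trail. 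One must instead argue over the several remaining placements of $z_1,z_2$ (inside $N_{G'}(i)\setminus N_{G'}(j)$, inside $N_{G'}(j)\setminus N_{G'}(i)$, or in $R$), and none of those sub-cases is carried out. You flag the saturated case as the ``main obstacle,'' but that is precisely where the theorem lives: as written the proposal reduces one unproved statement to another.
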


\citet{amanatidis_approximate_2021} recently published a surprising new type of
result, a clever \emph{approximate uniform sampler} (see, for
e.g.~\cite{jerrum_approximating_1989}) for $\mathcal{G}(\ell,u)$ where elements
of $[\ell,u]$ are near regular. They achieve this using a composite
Markov-chain. They also provide the first step in the direction of sampling
$\mathcal{G}(\ell,u)$ directly using the degree interval Markov chain.

\medskip

Let us reiterate that \citet{amanatidis_approximate_2021} apply the Markov chain
suggested by~\citet{rechner_uniform_2018}, which is routinely used in practice.
\begin{theorem}[Theorem~1.3 in~\cite{amanatidis_approximate_2021}]\label{thm:ak21}
    Let $0<\alpha<\frac12$ and $0<\rho<1$ be fixed. Let $r=r(n)$ with $2\le r\le
    (1-\rho)n$. If $[\ell_i,u_i]\subseteq [r-r^\alpha,r+r^\alpha]$ and $u_i-1\le
    \ell_i\le u_i$ for all $i\in [n]$, then the degree interval Markov chain
    $\mathbb{G}(\ell,u)$ is rapidly mixing.
\end{theorem}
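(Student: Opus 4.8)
The plan is to deduce \Cref{thm:ak21} from our main theorem, \Cref{thm:main}: once the latter is available, all that remains is to check that a thin near-regular degree interval falls under its hypotheses, i.e.\ that $[\ell,u]$ is weakly $P$-stable in the sense of \Cref{def:wpstable} (and that its endpoints $\ell$ and $u$ — and, if \Cref{thm:main} asks for it, every $d\in[\ell,u]$ — are $P$-stable degree sequences in the sense of \Cref{def:pstable}). By \Cref{rem:pstable} it is enough to show that $[\ell,u]$, viewed as a \emph{set} of degree sequences, is $P$-stable, and the decisive observation is that every degree sequence that enters \Cref{def:pstable,def:wpstable} in this situation — any $d$ with $\ell\le d\le u+\mathds{1}_{\{i,j\}}$, or any $d+\mathds{1}_{\{i,j\}}$ with $d\in[\ell,u]$ — has all of its coordinates in the narrow band $[\,r-r^\alpha,\ r+r^\alpha+1\,]$. (Thinness of $[\ell,u]$ is part of the hypothesis of \Cref{thm:ak21}; if $\mathcal{G}(\ell,u)=\emptyset$ there is nothing to prove.)

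Next I would feed this band into the stability criterion of \Cref{thm:JMS}. For any degree sequence with all degrees in $[\,r-r^\alpha,\ r+r^\alpha+1\,]$ we have $\Delta-\delta+1\le 2r^\alpha+2$ and $\delta\ge 1$, and since $r\le(1-\rho)n$ also $n-\Delta+1\ge \rho n-r^\alpha$. Hence the left-hand side of \eqref{eq:JMS} is $(\Delta-\delta+1)^2\le(2r^\alpha+2)^2=O(n^{2\alpha})=o(n)$ because $\alpha<\frac12$, whereas its right-hand side is $4\delta(n-\Delta+1)\ge 4(\rho n-r^\alpha)=\Omega(n)$ (indeed $\Omega(rn)$ when $r\to\infty$, using $\delta\ge r-r^\alpha\ge r/2$). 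So \eqref{eq:JMS} holds for all $n\ge n_0(\rho,\alpha)$, and the finitely many smaller $n$ contribute only finitely many degree sequences, which are $P$-stable for free; combining the two ranges under a single polynomial, the whole family is $P$-stable. By \Cref{thm:JMS} (and \Cref{rem:pstable}) this gives the weak $P$-stability of $[\ell,u]$ and the $P$-stability of $\ell$, $u$, and of every $d\in[\ell,u]$, so \Cref{thm:main} applies and \Cref{thm:ak21} follows.

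The substance is therefore entirely in \Cref{thm:main}, whose proof is the point of this paper, and I would reach it along the route dictated by \Cref{thm:sinclair}: exhibit a multicommodity flow on $\mathbb{G}(\ell,u)$ with polynomially bounded length and congestion. The device that makes this feasible is the \emph{precursor} of \Cref{sec:precur} — an auxiliary structure obtained by adjoining a bounded number of extra vertices to $[n]$ in which a single switch simulates each of the three moves (switch, hinge-flip, edge-toggle) of $\mathbb{G}(\ell,u)$, the added vertices absorbing the degree slack that hinge-flips and edge-toggles create — so that the $P$-stable switch-chain flow constructed in the proof of \Cref{thm:switch} can be pulled back to $\mathbb{G}(\ell,u)$; irreducibility, symmetry, reversibility and laziness are then either immediate from the definition of $\mathbb{G}(\ell,u)$ or follow from the connectivity of that auxiliary switch chain. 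Within \Cref{thm:ak21} itself there is no real obstacle — the arithmetic with \eqref{eq:JMS} above is all that is needed — but for the paper as a whole the hard part, as we expect, is designing the precursor so that it stays in a $P$-stable family and so that the congestion contributed by the rejected hinge-flips and edge-toggles and by the slack-absorbing vertices remains polynomial.
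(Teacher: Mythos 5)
Your deduction of \Cref{thm:ak21} from \Cref{thm:main} matches the paper exactly: both feed the band $[\,r-r^\alpha,\ r+r^\alpha\,]$ (or, in your slightly looser version, $[\,r-r^\alpha,\ r+r^\alpha+1\,]$) into \cref{eq:JMS}, invoke \Cref{thm:JMS} for $P$-stability and \Cref{rem:pstable} for weak $P$-stability, and conclude by \Cref{thm:main}; the arithmetic is fine and the finitely many small $n$ are absorbed into the polynomial as you say. One tangential caution: your closing paragraph mischaracterizes the precursor of \Cref{sec:precur} --- the paper does \emph{not} adjoin extra slack-absorbing vertices to $[n]$; the precursor is purely a bookkeeping package $(\varUpsilon,B,\pi)$ of paths, pairing functions and parameter tuples on the same vertex set, designed so that \Cref{def:precursor}\ref{prop:Psi} lets one invert the flow --- but this inaccuracy does not affect the derivation of \Cref{thm:ak21}.
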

Let $w_m$ be the number of realizations in $\mathcal{G}(\ell,u)$ with $m$ edges.
The conditions $u_i-1\le \ell_i\le u_i$ for all $i\in [n]$ are sufficient to
prove that $w_m$ is log-concave, i.e., $w_{m-1}w_{m+1}\le w_m^2$,
see~\cite[Theorem~5.4]{amanatidis_approximate_2021}. The main idea for that
proof is a symmetric-difference decomposition, which we also characterize in our
\textbf{key decomposition lemma}, \Cref{lemma:decomp}.

\paragraph{Our contribution.}
The main objective of this paper is to prove the following theorem.
\begin{theorem}\label{thm:main}
    Suppose $\mathcal{I}$ is a set of weakly $P$-stable and thin degree sequence
    intervals.
    Then the degree interval Markov chain $\mathbb{G}(\ell,u)$ is rapidly mixing
    for any $(\ell,u)\in\mathcal{I}$.
\end{theorem}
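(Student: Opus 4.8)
The plan is to deduce rapid mixing from the multicommodity flow framework of \Cref{thm:sinclair}, by reducing the analysis of $\mathbb{G}(\ell,u)$ to the switch Markov chain analysis of \cite{erdos_mixing_2022} referenced in \Cref{thm:switch}. First I would verify that $\mathbb{G}(\ell,u)$ satisfies the structural hypotheses of \Cref{thm:sinclair}: laziness and symmetry are immediate from the definition (each of switch, hinge-flip and edge-toggle is its own inverse and picked with a symmetric rule), reversibility follows from symmetry since the stationary distribution is uniform, and irreducibility on thin intervals should follow from a standard connectivity argument — any realization in $\mathcal{G}(\ell,u)$ can be transformed via edge-toggles and hinge-flips to one of a fixed degree sequence $d\in[\ell,u]$, after which switches connect that fiber $\mathcal{G}(d)$. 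The real work is bounding $\rho(f)$ and $\ell(f)$ for a well-chosen flow $f$.

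Next I would construct the flow. The standard route is to route commodity from $X$ to $Y$ along a sequence of elementary moves obtained by decomposing the symmetric difference $X\triangle Y$. This is exactly where the \textbf{key decomposition lemma}, \Cref{lemma:decomp}, enters: I would decompose $E(X)\triangle E(Y)$ into a collection of alternating closed and open trails (alternating between $E(X)\setminus E(Y)$ and $E(Y)\setminus E(X)$), process them one at a time, and within each trail perform the sequence of switches/hinge-flips/edge-toggles that \cite{erdos_mixing_2022} already analyses for the switch chain, now augmented to handle the open trails (whose endpoints correspond to coordinates where the degree changes within the interval $[\ell,u]$) using hinge-flips and edge-toggles. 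The intermediate graphs along such a path all lie in $\mathcal{G}(\ell,u+\mathds{1}_{\{i,j\}})$ for some pair $\{i,j\}$ — this is the point where the \emph{weak} $P$-stability hypothesis \eqref{eq:wpstable} is precisely what is needed (rather than full $P$-stability of the interval as a set of degree sequences), because when we bound the congestion $\rho(f)$ we count, for a fixed transition edge $xy$, the number of pairs $(X,Y)$ whose flow path traverses $xy$, and this count is controlled by a counting/encoding argument against $|\mathcal{G}(\ell,u+\mathds{1}_{\{i,j\}})|$, which \eqref{eq:wpstable} bounds by $p(n)|\mathcal{G}(\ell,u)|$.

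The heart of the argument — and the step I expect to be the main obstacle — is packaging the encoding/counting bound so that the existing machinery of \cite{erdos_mixing_2022} applies almost verbatim. The paper flags this: it introduces the notion of \emph{precursor} (\Cref{sec:precur}) precisely to abstract the auxiliary structure used in the switch-chain proof. So the real plan is: (i) define the appropriate precursor for the degree-interval setting, capturing the trail decomposition together with the bookkeeping of which coordinates are "open" and by how much the degree deviates; (ii) show that this precursor, together with the current transition edge $xy$ and a bounded amount of extra information, determines the pair $(X,Y)$ up to a polynomial factor — the injectivity/near-injectivity of this encoding is the crux; (iii) conclude that $\rho(f)$ is polynomially bounded, using weak $P$-stability to absorb the ambient count into $|\mathcal{G}(\ell,u)|=|\mathbb{G}(\ell,u)|$; (iv) bound $\ell(f)$ by $O(n^2)$ since $\|E(X)\triangle E(Y)\|_1 \le \binom n2$ and each elementary move reduces it by a bounded amount; (v) plug into \eqref{eq:sinclair}. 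The thinness hypothesis $u_i\le\ell_i+1$ is used throughout to guarantee that the open trails have a simple structure (each deviating coordinate deviates by exactly one, so the "excess degree graph" one works with is a simple graph), which is what makes the decomposition lemma and the log-concavity-type arguments of \cite[Theorem 5.4]{amanatidis_approximate_2021} go through. The subtlety to watch is that a single flow path may pass through several different "enlarged" degree sequences $\ell,u+\mathds{1}_{\{i,j\}}$ as trails are opened and closed; one must ensure the congestion bound holds edge-by-edge with the correct single pair $\{i,j\}$ attached to each transition edge $xy$, which should follow because at any intermediate graph at most one trail is "in progress".
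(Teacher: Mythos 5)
Your plan tracks the paper's own argument closely: Sinclair's multicommodity flow, routed along the alternating-trail decomposition of $E(X)\triangle E(Y)$ given by the key decomposition lemma, with the case-by-case encoding abstracted into the precursor and the congestion bound closed by the weak $P$-stability hypothesis, exactly as you describe. The one part you leave unexamined is the substantial case analysis by which an open trail of odd length is cut and reassembled into at most two closed trails --- this is where \Cref{lemma:CI} and Cases~1--8 do the real work --- and, incidentally, the paper never actually invokes the log-concavity result of \citet{amanatidis_approximate_2021} that you mention, only the underlying symmetric-difference decomposition.
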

It is not hard to see that \Cref{thm:ak21} is a special case of \Cref{thm:main}:
substituting into \cref{eq:JMS}, we get
\begin{equation*}
	{(2r^\alpha+1)}^2 \le 4(r-r^\alpha)(n-r-r^\alpha+1),
\end{equation*}
which holds for any $r$ and $\alpha$ if $n$ is large enough; see \Cref{fig:JMS}.

\medskip

The switch Markov chain can be embedded into the degree interval Markov chain
(the transition probabilities differ by constant factors). Actually, we will use
the proof of \Cref{thm:switch} as a plug-in in the proof of \Cref{thm:main}, so
this paper does not provide a new proof for the switch Markov chain. We will not
consider bipartite and direct degree sequences in this paper, but note that
\Cref{thm:switch} applies to those as well. It is easy to check that the proof
of \Cref{thm:main} works verbatim for bipartite graphs, because the edge-toggles
and hinge-flips are applied on vertices that are joined by paths of odd length
(hence in different classes). In all likelihood, the proof of \Cref{thm:main}
can be probably extended to directed graphs, because directed graphs can be
represented as bipartite graphs endowed with a forbidden 1-factor.

\begin{figure}[H]
	\centering
	\begin{tikzpicture}[scale=0.7]
	\begin{axis}[axis lines=center,
		    ylabel=degree,
	            xlabel=$\frac{\delta+\Delta}{2}$,
	            ytick={0,0.25,0.75,1},
	            yticklabels={$0$,$\frac14n$,$\frac34n$,$n$},
	            xtick={0,0.5,0.8,1},
    		    xticklabels={$0$,{\small $\frac12 n$},{\small $(1-\rho)n$},$n$}]

	\draw[thin,dashed,gray] (0,0.75)-- (0.5,0.75)--(0.5,1);
	\draw[thin,dashed,gray] (0,0.25)-- (0.5,0.25)--(0.5,0);

	\addplot[name path=F,black,domain={0:1},samples=1000] {x^2}
		node[pos=.85, below]{$\delta$};
	\addplot[name path=G,black,domain={0:1},samples=1000] {1-(1-x)^2}
		node[pos=.75, above]{$\Delta$};

	\draw[thin,dashed,gray] (0.8,0)-- (0.8,0.8);

	\addplot[pattern=vertical lines,
		pattern color=black!50]fill
		between[of=F and G, soft clip={domain=0:1}];

	\addplot[name path=H,black!50,domain={0:0.8},samples=1000] {x+0.02*sqrt(x)}
		node[rotate=45,pos=.5, below]{};
	\addplot[name path=I,black!50,domain={0:0.8},samples=1000]
		{x-0.02*sqrt(x)}
		node[pos=.75, above]{};
	\addplot[black!50]fill
		between[of=H and I, soft clip={domain=0:0.9}];

	\end{axis}
	\end{tikzpicture}
    \caption{\Cref{thm:JMS} defines pairs of lower and upper bounds ($\delta$
        and $\Delta$), such that any degree sequence which obeys these bounds is
        $P$-stable; the area between these functions is filled with vertical
        lines. The pairs $(\delta,\Delta)$ of most distant bounds allowed by
        \cref{eq:JMS} are given by intersections with vertical lines. For
        example, any degree sequence which is (element-wise) between
        $\delta=\frac14 n$ and $\Delta=\frac34 n$ is $P$-stable. In comparison,
        the solid gray region represents a $\sqrt{r}$-wide region around the regular
        degree sequences, which corresponds to the domain of
        \Cref{thm:ak21}.}\label{fig:JMS}
\end{figure}
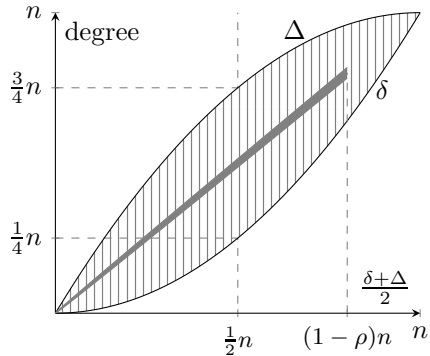

\section{Constructing and bounding the multicommodity-flow}

We will define a number of auxiliary structures. Via these structures, we will
define a multicommodity-flow on the degree-interval Markov chain
$\mathbb{G}(\ell,u)$ and measure its load.

\subsection{Constructing and counting the auxiliary matrices}
\citet{kannan_simple_1999} already introduced an auxiliary matrix to examine the
load of a multicommodity-flow. Our auxiliary matrices will be a little
different. We start with some definitions, then prove two easy statements.
\begin{definition}\label{def:widehatM}
    Let the adjacency matrix of a graph $X$ on vertex set $[n]$ be
    $A_X\in{\{0,1\}}^{n\times n}$. Let $A_{(vw)}$ be the adjacency matrix of the graph $([n],\{vw\})$ with
    exactly one edge. Let us define
    \begin{equation*}
    	\widehat M(X,Y,Z)=A_X+A_Y-A_Z.
    \end{equation*}
\end{definition}
\begin{remark}
If $X,Y,Z$ are graphs on $[n]$, then $\widehat
M(X,Y,Z)\in{\{-1,0,1,2\}}^{n\times n}$.
\end{remark}
Let us define the \emph{matrix switch} operation. (In a previous
paper~\cite{erdos_mixing_2022}, this operation was called a generalized switch.)
\begin{definition}[Switch on a matrix]
    The switch operation on a matrix $M$ on vertices $(a,b,c,d)$ produces the
    matrix
    \begin{equation*}
        M-A_{(ab)}-A_{(cd)}+A_{(ac)}+A_{(bd)}.
    \end{equation*}
\end{definition}
\begin{remark}
    A switch on a graph $X$ corresponds to a switch on its adjacency matrix
    $A_X$.
\end{remark}

\begin{definition}
	Let $M\in {\{-1,0,1,2\}}^{n\times n}$  and let $\deg_M\in \mathbb{Z}^n$ with
    $(\deg_M)_i=\sum_{j=1}^n M_{ij}$ be the sequence of its row sums. We say that $M$ is
	\emph{$c$-tight} (for some $c\in \mathbb{N}$) if $M$ is a symmetric matrix with zero
    diagonal and there exists a graph $W\in \mathcal{G}(\deg_M,\deg_M+\mathds{1}_{\{i,j\}})$ for some
	$\{i,j\}\in\binom{[n]}{2}$ such that $\|M-A_W\|_1\le 2c$.
\end{definition}
Recall the
definition of weak $p$-stability and \cref{eq:wpstable}. We will use the number
of $c$-tight matrices to bound the number of auxiliary matrices.
\begin{lemma}\label{lemma:ctight}
    The number of matrices $M\in {\{-1,0,1,2\}}^{n\times n}$ that are $c$-tight
    and $\deg_M\in [\ell,u]$ for a weakly $p$-stable $[\ell,u]$ is at most
	\begin{equation*}
	\left|\left\{ M\in {\{-1,0,1,2\}}^{n\times n}\ |\ \deg_M\in [\ell,u]\text{\ and $M$ is $c$-tight}\right\}\right|\le n^{2c}\cdot p(n)\cdot
		|\mathcal{G}(\ell,u)|.
	\end{equation*}
\end{lemma}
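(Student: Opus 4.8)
The plan is to count $c$-tight matrices by first projecting each such matrix onto a nearby graph and then controlling how many matrices can project onto the same graph. Concretely, fix a weakly $p$-stable interval $[\ell,u]$ and let $M\in\{-1,0,1,2\}^{n\times n}$ be $c$-tight with $\deg_M\in[\ell,u]$. By definition of $c$-tightness there is a graph $W\in\mathcal{G}(\deg_M,\deg_M+\mathds{1}_{\{i,j\}})$ for some pair $\{i,j\}$ with $\|M-A_W\|_1\le 2c$. Note $\deg_W$ is either $\deg_M$ (so $\deg_W\in[\ell,u]$) or $\deg_M+\mathds{1}_{\{i,j\}}$ (so $\deg_W\in[\ell,u+\mathds{1}_{\{i,j\}}]$); in either case $W\in\bigcup_{\{i,j\}}\mathcal{G}(\ell,u+\mathds{1}_{\{i,j\}})$. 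First I would record that by \Cref{def:wpstable} the number of such ``witness'' graphs $W$ is at most $p(n)\cdot|\mathcal{G}(\ell,u)|$.

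The remaining step is the fibre bound: for a fixed graph $W$, how many symmetric zero-diagonal matrices $M\in\{-1,0,1,2\}^{n\times n}$ satisfy $\|M-A_W\|_1\le 2c$? Writing $D=M-A_W$, this is a symmetric integer matrix with zero diagonal and $\|D\|_1\le 2c$, i.e.\ $\sum_{i<j}|D_{ij}|\le c$. Each nonzero off-diagonal entry $D_{ij}$ lies in a bounded range (since $M_{ij}\in\{-1,0,1,2\}$ and $(A_W)_{ij}\in\{0,1\}$, we have $D_{ij}\in\{-2,-1,0,1,2\}$), so choosing $D$ amounts to choosing at most $c$ positions from $\binom{[n]}{2}$ together with a value in a fixed finite set for each; this gives at most $\bigl(O(n^2)\bigr)^{c}$ choices, and with a small amount of care (each position actually contributes a factor $n^2$ and the value choices contribute only a constant), at most $n^{2c}$ up to the polynomial slack already present. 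So the fibre over each $W$ has size at most $n^{2c}$.

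Multiplying the two bounds gives
\begin{equation*}
    \bigl|\{M\in\{-1,0,1,2\}^{n\times n}\ :\ \deg_M\in[\ell,u],\ M\text{ is }c\text{-tight}\}\bigr|\le n^{2c}\cdot p(n)\cdot|\mathcal{G}(\ell,u)|,
\end{equation*}
as claimed, since every $c$-tight $M$ with $\deg_M\in[\ell,u]$ arises as $A_W+D$ for some witness $W$ counted above and some $D$ in its fibre.

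I expect the only real subtlety to be pinning down the fibre count so that it comes out as exactly $n^{2c}$ rather than $C^c n^{2c}$ for some constant $C$. The clean way is to observe that a symmetric zero-diagonal $D$ with $\|D\|_1\le 2c$ is determined by choosing an ordered sequence of at most $2c$ matrix positions $(i,j)$ (with repetition allowed) and, for each, adding $\pm 1$; since there are $n^2$ ordered positions and at most $2c$ steps, and the sign is absorbed by allowing both $+1$ and $-1$ as ``position types'' or by padding, one gets at most $(n^2)^{2c}$ sequences — this is more generous than needed, so an honest accounting through the $\{-1,0,1,2\}$ restriction and $\sum_{i<j}|D_{ij}|\le c$ tightens it to $n^{2c}$. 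The rest is the direct application of weak $p$-stability, which is immediate once the witness $W$ has been identified and placed in the correct union.
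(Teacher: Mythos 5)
Your proof follows exactly the paper's approach: select a witness graph $W\in\bigcup_{\{i,j\}}\mathcal{G}(\ell,u+\mathds{1}_{\{i,j\}})$ (at most $p(n)\cdot|\mathcal{G}(\ell,u)|$ choices by weak $p$-stability), then bound the fiber of $c$-tight matrices over each fixed $W$ by $n^{2c}$ via the symmetric perturbation $D=M-A_W$ with $\|D\|_1\le 2c$. The paper's fiber count is phrased as choosing at most $c$ symmetric position pairs where the entry of $A_W$ is changed to $-1$ or $+2$, yielding $\binom{n}{2}^c\cdot 2^c\le n^{2c}$, which is the same calculation at essentially the same level of informality as your sketch.
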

\begin{proof}
    We can obtain any $c$-tight $M$ we want to enumerate as follows. First select, an appropriate
    $\{i,j\}\in\binom{n}{2}$ and a realization $W\in
    \mathcal{G}(\ell,u+\mathds{1}_{\{i,j\}})$: by weak $p$-stability, there are
    at most $p(n)\cdot|\mathcal{G}(\ell,u)|$ such choices. Then select $c$
    symmetric pairs of positions where the adjacency matrix $A_W$ is changed to
    $-1$ or $+2$ (while preserving symmetry). The latter selection can be made
    in at most $\binom{n}{2}^c\cdot 2^c$ different ways.
\end{proof}

The following lemma is crucial for proving the tightness of the auxiliary matrices
arising in the multicommodity-flow. A switch on a matrix adds $+1$ and
$-1$ to the two-two diagonally opposed entries located in a $2\times 2$ submatrix so that
the row- and column-sums are preserved.
\begin{lemma}[based on Lemma~7.2 of~\cite{erdos_mixing_2022}]\label{lemma:removeplus2s}
	Suppose $M\in {\{-1,0,1,2\}}^{n\times n}$ is a symmetric matrix whose
	diagonal is zero. Suppose further, that
	\begin{enumerate}[label=(\roman*)]
		\item the number of $+2$ entries of $M$ is at most $4$,
		\item the number of $-1$ entries of $M$ is at most $2$,
		\item there exists $V\subseteq [n]$ where $M|_{V\times V}$
			contains every $+2$ and $-1$ entries of $M$,
		\item there exists $v\in V$ such that the $+2$ and $-1$	entries of $M$ are all located in the row and
			column corresponding to $v$,
        \item the row-sum of $v$ in $M|_{V\times V}$ is minimal, and finally,
		\item every row- and column sum in $M|_{V\times V}$ is at least $1$ and
		at most $|V|-2$.
	\end{enumerate}
	Then $M$ is $5$-tight.
\end{lemma}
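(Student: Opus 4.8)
The goal is to exhibit a single simple graph $W$ on $[n]$, together with a pair $\{i,j\}$, such that $\deg_W\in[\deg_M,\deg_M+\mathds{1}_{\{i,j\}}]$ and $\|M-A_W\|_1\le 10$; equivalently, to turn $M$ into an adjacency matrix by altering at most five symmetric pairs of entries, in such a way that the resulting degree sequence stays in a tiny interval above $\deg_M$.

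First I would translate hypotheses (i)--(vi) into working facts. By (iii), $M$ already coincides with a symmetric $0/1$ matrix off $V\times V$, so all alterations can be confined to $V\times V$ and only the degrees of vertices in $V$ are affected. By (iv), the only entries of $M|_{V\times V}$ outside $\{0,1\}$ lie in the row and column of a single vertex $v$; set $P=\{w:M_{vw}=2\}$ and $N=\{w:M_{vw}=-1\}$, so (i)--(ii) give $|P|\le 2$ and $|N|\le 1$. Writing $r_w$ for the row sum of $w$ in $M|_{V\times V}$, (vi) says $r_w\in[1,|V|-2]$ for every $w\in V$; since a weight-$2$ entry eats two units against the upper bound, a one-line count yields that $v$ has at least $|P|+1$ vertices of $V\setminus\{v\}$ as non-neighbours in $M|_{V\times V}$, and each $w\in P$ has at least two. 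Finally, (v) says $r_v=\min_{w\in V}r_w$, so $w\in P$ forces $r_v\ge 2$ and $|P|=2$ forces $r_v\ge 4$; thus in the delicate case every vertex of $V$ has large row sum inside $V$.

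The graph $W$ is then built by a case analysis on $(|P|,|N|)$, each time by the move \emph{clamp, then repair}: clamp every $2$ to $1$ and every $-1$ to $0$ -- which drops $\deg(v),\deg(w)$ by one for $w\in P$ and raises $\deg(v),\deg(x)$ by one for $x\in N$ -- and then insert a short list of edges to restore the degree sequence into some $[\deg_M,\deg_M+\mathds{1}_{\{i,j\}}]$, with the two coordinates of slack absorbing the unavoidable over-correction and the endpoints of the new edges taken from the non-neighbours guaranteed above. The cases $(0,0),(0,1),(1,0),(1,1)$ close with at most three altered pairs; for instance in $(1,0)$ one clamps $vw$ and then inserts $vw'$ and $ww''$ with $w'\ne w''$ non-neighbours of $v$ and of $w$ respectively, landing in $[\deg_M,\deg_M+\mathds{1}_{\{w',w''\}}]$. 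The load-bearing case is $|P|=2$: after clamping $vw_1,vw_2$ the vertex $v$ is short by two and $w_1,w_2$ short by one each, and the natural repair inserts the edge $w_1w_2$ (fixing $w_1,w_2$) together with two edges $vz_1,vz_2$ for distinct non-neighbours $z_1,z_2$ of $v$ -- available precisely because $r_v\ge 4$ makes the count above effective -- for exactly five altered pairs. The genuine obstacle is the sub-case of $|P|=2$ in which $w_1w_2$ is \emph{already} an edge of $M$: clamping two $+2$'s that meet at $v$ concentrates a degree deficit of four on three mutually adjacent vertices, and squeezing it inside the ``$+\mathds{1}_{\{i,j\}}$'' slack is tight, so the repair has to be rerouted through a fourth vertex of $V$; this is where minimality of $r_v$ together with $r_w\le|V|-2$ has to be pushed hardest (and where, if the bookkeeping resists, one appeals to the way the precursor generates $+2$ entries, cf.\ the discussion preceding the lemma, to see that the configuration does not actually occur). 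Once every case is discharged, tallying the altered symmetric pairs gives $\|M-A_W\|_1\le 10$, i.e.\ $M$ is $5$-tight. A robust fallback, should the entry-by-entry accounting in the last sub-case prove brittle, is to take $W$ minimising $\|M-A_W\|_1$ subject to $\deg_W\in[\deg_M,\deg_M+\mathds{1}_{\{i,j\}}]$ for some $\{i,j\}$ and to argue by an alternating-path (switch-type) exchange on the signed matrix $M-A_W$ that a discrepancy exceeding $10$ would contradict minimality in the presence of (v)--(vi).
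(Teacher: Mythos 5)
Your clamp-and-repair strategy genuinely diverges from the paper's route, and in the one case that matters it does not close. The paper's proof is a two-line delegation: cite Lemma~7.2 of~\cite{erdos_mixing_2022} to obtain at most two matrix switches turning $M$ into a $\{0,1\}$-matrix up to a surviving $-1$-pair, then zero out the $-1$'s. The crucial feature of a matrix switch (in the sense of Definition~3.3) is that it preserves every row and column sum, so the degree-interval constraint $\deg_W\in[\deg_M,\deg_M+\mathds{1}_{\{i,j\}}]$ is handled automatically by the switches and only breaks (by exactly $\mathds{1}_{\{i,j\}}$) at the final $-1\to 0$ step. You discard this degree-preserving mechanism and instead clamp first and repair degrees by raw edge insertions/deletions; that works in the easy sub-cases but pays a price exactly where you flag it.

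The sub-case $|P|=2$ with $w_1w_2\in E(M)$ is a genuine gap, and neither of your two escape hatches is valid. Appealing to ``the way the precursor generates $+2$ entries'' is circular here: \Cref{lemma:removeplus2s} is a standalone statement about arbitrary matrices satisfying (i)--(vi), and it is invoked in \Cref{lemma:Cthin} precisely \emph{as} a black box; you cannot import application-specific structure into its proof. The ``take $W$ minimising $\|M-A_W\|_1$ and argue by alternating paths'' fallback is not an argument at all as written — it is a restatement of the goal. Concretely, after clamping both $+2$'s you have a deficit of $2$ at $v$ and $1$ at each of $w_1,w_2$, and every edge you insert to compensate creates $+1$ surplus at its other endpoint; with $w_1w_2$ unavailable you end up either over-shooting at a third vertex (forcing a compensating deletion, which pushes you past five altered pairs) or needing to place an edge at $w_1$ or $w_2$ that must terminate in one of your already-loaded $z_i$'s, which overflows the $\mathds{1}_{\{i,j\}}$ slack. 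Local ad hoc rerouting does not obviously fit in budget, and hypotheses (v)--(vi) by themselves do not forbid $M_{w_1w_2}=1$ (one can write down a symmetric $\{0,1,2\}$ matrix on $|V|=7$ with $r_v=4$ minimal, all row sums in $[4,5]\subseteq[1,|V|-2]$, and $M_{w_1w_2}=1$). The way out is the one the paper takes: switches move a unit of adjacency around a $4$-cycle without touching any row sum, and the row-sum conditions (v)--(vi) are exactly what guarantee the existence of suitable switch partners (this is the content of Lemma~7.2 in the cited paper). You should read and reuse that lemma rather than re-derive the matrix cleanup from scratch.
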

\begin{proof}
    By Lemma~7.2 of~\cite{erdos_mixing_2022}, there exist
	at most two matrix switches that turn $M$ into a $\{0,1\}$ matrix with the
    possible exception of a symmetric pair of $-1$ entries.
    The $-1$'s remaining after the two matrix switches can be removed by
    adding $+1$ to the pairs of negative entries.
\end{proof}

\subsection{The alternating-trail decomposition}
We consider the set $\binom{[n]}{2}$ in lexicographic order, which
induces an order on the set of edges of any graph defined on $[n]$.
\begin{definition}
    Given a set of edges $\nabla\subseteq \binom{[n]}{2}$ on $[n]$ as vertices,
    let $\nabla_v=\{e\ |\ v\in e\in \nabla\}$. We call $s:\{(v,e)\ |\ v\in e\in
    \nabla\}\to\nabla$ a \emph{pairing function} on $\nabla$ if
    $s(v,\bullet):\nabla_v\to\nabla_v$ defined as $s(v,\bullet):e\mapsto s(v,e)$
    is an involution, i.e., $s(v,\bullet)$ is it own inverse for any $v\in [n]$.
    (The bullet $\bullet$ is the placeholder for the variable $e$ which is the
    second argument of $s$.)
    The set of all pairing functions on $\nabla$ is denoted by $\Pi(\nabla)$.
\end{definition}
\begin{figure}[H]
	\centering
	\begin{tikzpicture}

	\node[draw,circle] (u) at (2,2) {$u$};
	\node[draw,circle] (v) at (4,2) {$v$};
	\draw (u)--(v);
	\foreach \y in {0,...,4}
	{
		\node[draw,circle] (u\y) at ($ (u)+(130+\y*25:2) $) {};
		\draw (u\y)--(u);
	}
	\foreach \y in {0,...,3}
	{
		\node[draw,circle] (v\y) at ($ (v)+(-37.5+\y*25:2) $) {};
		\draw (v\y)--(v);
	}

	\draw[thick,orange,<->] ($ (u)+(180:1) $) arc (180:205:1);
	\draw[thick,orange,<->] ($ (u)+(130:0.75) $) arc (130:230:0.75);
	\draw[thick,orange,<->] ($ (u)+(155:0.5) $) arc (155:0:0.5);
	
	\draw[thick,cyan,<->] ($ (v)+(180:0.5) $) arc (-90:270:0.2);
	\draw[thick,cyan,<->] ($ (v)+(-37.5:0.75) $) arc (-37.5:-12.5:0.75);
	\draw[thick,cyan,<->] ($ (v)+(12.5:0.75) $) arc (12.5:37.5:0.75);

	\end{tikzpicture}
	\caption{The functions $s(u,\bullet)$ and $s(v,\bullet)$ pair the
	edges incident on $u$ and $v$, respectively.
	The orange arcs (\raisebox{0.1em}{\protect\tikz{\protect\draw[thick,orange](0,0)
	edge[<->,bend left=45] (0.5,0);}}) join edges that are pairs in $s(u,\bullet)$. The
	cyan arcs (\raisebox{0.1em}{\protect\tikz{\protect\draw[thick,cyan](0,0)
	edge[<->,bend left=45] (0.5,0);}}) join edges that are pairs in $s(v,\bullet)$. The
	cyan loop (\raisebox{-0.25em}{\protect\tikz{\protect\draw[thick,cyan,<->](0,0)
	arc (-90:270:0.15);}}) corresponds to the relation $s(v,uv)=uv$.}\label{fig:involution}
\end{figure}

\begin{definition}\label{def:Lns}
    Let $L(\nabla,s)$ be the following subgraph of the line graph  of
    $([n],\nabla)$: join $e,f\in \nabla$ if and only if $e\neq f$ and there
    exists a vertex $v\in e\cap f$ such that $s(v,e)=f$ (or equivalently,
    $s(v,f)=e$).
\end{definition}

\begin{lemma}
    Each connected component of $L(\nabla,s)$ is a path or a cycle.
\end{lemma}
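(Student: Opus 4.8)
The plan is to argue that $L(\nabla,s)$ has maximum degree at most $2$, and that any graph with maximum degree at most $2$ is a disjoint union of paths and cycles (isolated vertices being degenerate paths). The second fact is standard, so the real content is the degree bound.

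First I would fix a vertex $e\in\nabla$ of the line graph $L(\nabla,s)$ and count its neighbours. By \Cref{def:Lns}, a neighbour of $e$ is an edge $f\neq e$ for which there is a vertex $v\in e\cap f$ with $s(v,e)=f$. Write $e=\{x,y\}$. Since $e$ and $f$ share a vertex, that vertex is $x$ or $y$. So every neighbour of $e$ is obtained either as $s(x,e)$ or as $s(y,e)$ — at most one choice at $x$ and at most one at $y$. I would then need to check the edge cases: it may happen that $s(x,e)=e$ (a fixed point of the involution $s(x,\bullet)$), in which case that endpoint contributes no neighbour; similarly at $y$. In the extreme case $s(x,e)=s(y,e)$ could coincide, contributing just one neighbour. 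In all cases the number of neighbours of $e$ is at most $2$. Hence $\Delta(L(\nabla,s))\le 2$.

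Then I would invoke the elementary structural fact: a (simple, finite) graph with all degrees at most $2$ is a vertex-disjoint union of paths and cycles, so in particular each connected component is a path or a cycle. One clean way to see this within the paper's framework: pick any component $C$; if $C$ has a vertex of degree $\le 1$, start a maximal trail from it — since every vertex has degree $\le 2$, the trail is forced and cannot revisit a vertex (a revisit would force degree $\ge 3$ at the meeting point, or create a shorter separate cycle), so it terminates at another degree-$\le 1$ vertex, exhausting $C$ as a path; if every vertex of $C$ has degree exactly $2$, the same forced-walk argument closes up into a single cycle covering $C$.

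The main obstacle, such as it is, is purely bookkeeping: carefully handling the possibility that $s(v,e)=e$, i.e.\ that $e$ is a fixed point of the involution at one of its endpoints (the cyan loop in \Cref{fig:involution}), and making sure this is correctly interpreted as "$e$ contributes no edge of $L(\nabla,s)$ through that endpoint" rather than as a loop at $e$ — \Cref{def:Lns} explicitly requires $e\neq f$, so self-loops are excluded and the degree bound of $2$ is genuinely attained only when both involutions move $e$ to two distinct edges. No deeper idea is needed; this is a short lemma whose role is to set up the alternating-trail decomposition used later.
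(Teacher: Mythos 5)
Your proof is correct and follows essentially the same route as the paper: observe that each vertex $e=\{x,y\}$ of $L(\nabla,s)$ has at most two neighbours, namely $s(x,e)$ and $s(y,e)$, so the maximum degree is at most $2$, and then invoke the standard fact that such a graph decomposes into paths and cycles. The paper states this in one line; your version just spells out the fixed-point and coincidence edge cases, which is harmless but not logically necessary.
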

\begin{proof}
    Every edge $e=ij\in \nabla$ has at most two neighbors in $L(\nabla,s)$, the
    edges $s(i,e)$ and $s(j,e)$, thus the maximum degree in $L(\nabla,s)$ is 2.
\end{proof}
\begin{remark}
A cycle in a line graph corresponds to a closed trail in the
original graph. A path in a line graph corresponds to an open trail in the original
graph (which may in theory start and end at the same vertex, but this will never be the
case in our applications, see \Cref{lemma:decomp}). \Cref{def:Lns} generalizes
a concept of~\citet{kannan_simple_1999}, where all of the components are cycles.
\end{remark}
\begin{definition}\label{def:nablapartition}
    Suppose $\nabla\subseteq \binom{[n]}{2}$ and $s$ is a pairing function on
    $\nabla$. Denote by $p_s$ the number of connected components of $L(\nabla,s)$,
    and let us define the unique partition
    \begin{equation}\label{eq:nabla}
        \nabla=W_1^s\uplus W_2^s\uplus \cdots\uplus
        W_{p_s}^s,
    \end{equation}
    where each $W^s_k$ is the vertex set of a component of $L(\nabla,s)$, and
    the sets ${(W^s_k)}_{k=1}^{p_s}$ are listed in the order induced by their
    lexicographically first edges.
\end{definition}

\begin{definition}\label{def:sW}
    For any set of edges $W$ and $s\in \Pi(\nabla)$, let
    \begin{equation*}
        s|_{W}=\left\{ (v,e)\mapsto s(v,e)\ |\ v\in e\in W\text{\ and }s(v,e)\in W\right\}.
    \end{equation*}
    Subsequently, we also define
    \begin{equation*}
        s-e=s|_{\nabla\setminus\{ e\}}.
    \end{equation*}
\end{definition}
\begin{remark}
	If $W$ is the vertex set of a component of $L(\nabla,s)$, then $s|_{W}\in\Pi(W)$.
\end{remark}
\begin{definition}
    If $\nabla=\{u_{i-1}u_{i}\ |\ i=1,\ldots,r\}$ is a set of $r$ distinct edges,
    let $s=u_{0}u_{1}\ldots u_{r-1} u_r\in \Pi(\nabla)$ denote
    \begin{align*}
        s=\bigcup_{1\le i\le r-1}&\Big\{ (u_{i},u_{i-1}u_{i})\mapsto
            u_{i}u_{i+1},\ (u_{i},u_{i}u_{i+1})\mapsto u_{i-1}u_{i}\
        \Big\}\cup \\
         \cup
            &\left\{\begin{array}{ll}
                \left\{(u_0,u_0u_1)\mapsto u_0u_1,(u_r,u_{r-1}u_r)\mapsto
                    u_{r-1}u_r\right\} &\text{if }u_0\neq u_r,\\
                \emptyset &\text{if }u_0=u_r.
            \end{array}\right.
    \end{align*}
\end{definition}
\begin{lemma}
    Let $\nabla=\{u_{i-1}u_{i}\ |\ i=1,\ldots,r\}$ and $s=u_0u_1\ldots u_r$.
    Then
    \begin{itemize}
        \item the walk $u_0u_1\ldots u_r$ is a closed trail if and only if $L(\nabla,s)$ is a cycle, and
        \item the walk $u_0u_1\ldots u_r$ is an open trail if and only if $L(\nabla,s)$ is a path.
    \end{itemize}
    In other words, the Eulerian trails on $\nabla$ can
    be naturally identified with those pairing functions $s\in\Pi(\nabla)$ for
    which $L(\nabla,s)$ is connected.
\end{lemma}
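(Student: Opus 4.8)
The plan is to reduce both equivalences to an explicit description of the graph $L(\nabla,s)$. First I would record the simple but essential observation that, since $\nabla=\{u_{i-1}u_i : i=1,\dots,r\}$ consists of $r$ \emph{distinct} edges, the walk $u_0u_1\cdots u_r$ traverses each edge of $\nabla$ exactly once and hence is always a trail; it is a closed trail precisely when $u_0=u_r$ and an open trail precisely when $u_0\neq u_r$. Since a loopless closed trail on distinct edges forces $r\ge 3$, the degenerate values $r\le 2$ never yield a closed trail and may be dismissed at the outset. It therefore remains to prove: $L(\nabla,s)$ is a cycle if and only if $u_0=u_r$, and $L(\nabla,s)$ is a path if and only if $u_0\neq u_r$.

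Next I would unwind the definition of $s=u_0u_1\cdots u_r$ to determine $E(L(\nabla,s))$ exactly. Writing $e_i=u_{i-1}u_i$, the only pairs $(v,e)$ on which $s$ acts non-trivially are $s(u_i,e_i)=e_{i+1}$ (equivalently $s(u_i,e_{i+1})=e_i$) for $1\le i\le r-1$, together with the pairing of $e_1$ and $e_r$ at the common endpoint in the closed case $u_0=u_r$; on all other pairs $s$ is the identity. (Here one should check that these prescriptions are mutually consistent even when some of the $u_i$ coincide, which they are, because a repetition $u_i=u_j$ with $|i-j|=1$ would produce a loop; so $s$ genuinely extends to a pairing function.) Every edge of $L(\nabla,s)$ arises from a non-trivial value of $s$, and conversely $u_i\in e_i\cap e_{i+1}$ with $e_i\neq e_{i+1}$ (distinctness plus the loopless hypothesis) makes $\{e_i,e_{i+1}\}$ an edge. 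Hence, if $u_0\neq u_r$ then $E(L(\nabla,s))=\{e_ie_{i+1}:1\le i\le r-1\}$ and $L(\nabla,s)$ is exactly the path $e_1e_2\cdots e_r$; if $u_0=u_r$ then $r\ge 3$, $e_1\neq e_r$, the edge $\{e_1,e_r\}$ is also present, and $L(\nabla,s)$ is exactly the cycle $e_1e_2\cdots e_re_1$. Because the $e_i$ are pairwise distinct these are honest paths and cycles, and a path is never a cycle, so both equivalences follow.

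For the final reformulation I would argue in two directions. An Eulerian trail on $\nabla$ is exactly a walk $u_0u_1\cdots u_r$ whose edge set is all of $\nabla$; applying the construction $u_0\cdots u_r\mapsto s$ produces, by the previous paragraph, an $s\in\Pi(\nabla)$ with $L(\nabla,s)$ a single path or cycle, in particular connected. Conversely, if $s\in\Pi(\nabla)$ and $L(\nabla,s)$ is connected, then by the earlier lemma (every component of $L(\nabla,s)$ is a path or a cycle) it is a single path or a single cycle, and reading the vertices of $([n],\nabla)$ off along it recovers an Eulerian trail realizing $s$ --- this is the correspondence already noted in the remark following \Cref{def:Lns}. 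The two assignments are mutually inverse up to reversing an open trail, or rotating/reversing a closed one, which is the sense in which the identification is natural.

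I expect the only genuine friction to be (i) verifying that $s$ has \emph{no} non-trivial values beyond those listed, so that $L(\nabla,s)$ picks up no extra ``chords'' and is literally a path or a cycle rather than merely containing one as a spanning subgraph --- this is exactly where the distinctness of the $e_i$ and the absence of loops are used --- and (ii) the closed-case bookkeeping at $u_0=u_r$, where one must make sure $e_1$ and $e_r$ are $s$-paired so that the path closes into a cycle. The remaining steps are routine.
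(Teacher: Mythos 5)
Your proposal is correct and simply fills in the definitional unwinding that the paper declares ``Trivial'': compute $E(L(\nabla,s))$ explicitly as the path (or cycle, when $u_0=u_r$) on the line-graph vertices $e_1,\dots,e_r$, and observe that the closed/open dichotomy of the trail matches the cycle/path dichotomy of $L(\nabla,s)$. Your extra care about degenerate $r$ and the possible coincidence of some $u_i$ is the right place to check but introduces no new ideas beyond what the paper takes for granted.
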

\begin{proof}
    Trivial.
\end{proof}
\Cref{fig:nabla} shows a closed trail defined by a pairing function.
\begin{figure}[ht]
	\centering
    \begin{tikzpicture}[scale=0.7]
		\foreach \d in {0,...,5}
		{
			\node[draw,circle] (k1\d) at (30+60*\d:3) {};
			\node[draw,circle] (k2\d) at ($ (9,0)+(90+60*\d:3) $) {};
		}

		\foreach \d in {0,...,3}
		{
			\pgfmathtruncatemacro{\e}{Mod(2*\d,6)}
			\pgfmathtruncatemacro{\f}{Mod(2*\d+1,6)}
			\pgfmathtruncatemacro{\g}{Mod(2*\d+2,6)}
			\draw[thick,red] (k1\e)--(k1\f) (k2\e)--(k2\f);
			\draw[thick,blue] (k1\f)--(k1\g) (k2\f)--(k2\g);
		}

		\draw[thick,red] (k10)--(k22) (k15)--(k21);
		\draw[thick,blue] (k15)--(k22) (k10)--(k21);

		\draw[<->] ($ (k10)+(150:0.75) $) arc (150:270:0.75);
		\draw[<->] ($ (k10)+(0:0.75) $) arc (0:-39:0.75);

		\draw[<->] ($ (k15)+(90:0.75) $) arc (90:39:0.75);
		\draw[<->] ($ (k15)+(0:0.75) $) arc (0:-150:0.75);

		\draw[<->] ($ (k21)+(-90:0.75) $) arc (-90:-141:0.75);
		\draw[<->] ($ (k21)+(180:0.75) $) arc (180:30:0.75);

		\draw[<->] ($ (k22)+(90:0.75) $) arc (90:-30:0.75);
		\draw[<->] ($ (k22)+(180:0.75) $) arc (180:141:0.75);

		\draw[<->] ($ (k11)+(-30:0.75) $) arc (-30:-150:0.75);
		\draw[<->] ($ (k20)+(-30:0.75) $) arc (-30:-150:0.75);

		\draw[<->] ($ (k12)+(30:0.75) $) arc (30:-90:0.75);
		\draw[<->] ($ (k25)+(-90:0.75) $) arc (-90:-210:0.75);

		\draw[<->] ($ (k13)+(90:0.75) $) arc (90:-30:0.75);
		\draw[<->] ($ (k24)+(-150:0.75) $) arc (-150:-270:0.75);

		\draw[<->] ($ (k14)+(150:0.75) $) arc (150:30:0.75);
		\draw[<->] ($ (k23)+(-210:0.75) $) arc (-210:-330:0.75);
	\end{tikzpicture}
	\caption{An example for $\nabla=\nabla_{X,Y}$ and an
	$({X,Y})$-alternating $s\in \Pi(\nabla)$ (see~\Cref{def:XYalternating}). Red edges belong to
	$X$ and blue edges belong to $Y$. There are $2^4$ different
	$\pi\in\Pi(\nabla)$ that are $({X,Y})$-alternating. There is one
	such $\pi$ where $L(\nabla,\pi)$ has 3 components (the two
	$C_6$'s and a $C_4$ in the middle), and there are 6 cases where
	$L(\nabla,\pi)$ has 2 components. The black arcs represent an
	$s$ such that $L(\nabla,s)$ has exactly one component, or, in
	other words, $s$ defines a closed Eulerian trail on
	$\nabla$.}\label{fig:nabla}
\end{figure}
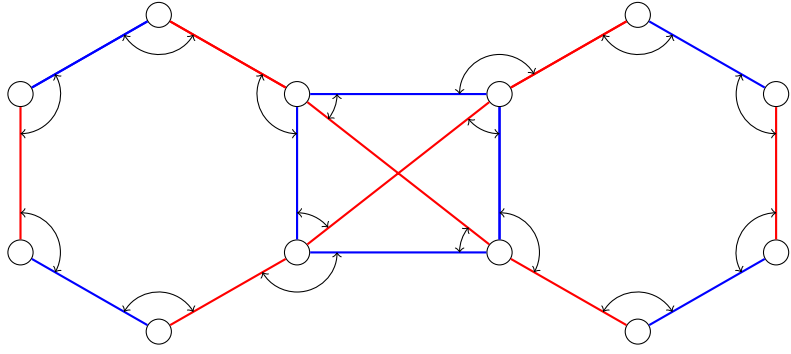

From now on, by slight abuse of notation, we will not distinguish between
$s=u_0u_1\ldots u_r$ as a pairing function and the trail it describes.

\begin{definition}\label{def:alternating}
    Let $Z$ be an arbitrary graph on $n$-vertices and
    let $\nabla\subseteq\binom{[n]}{2}$ be an arbitrary subset of pairs of vertices. A
    pairing-function $s\in \Pi(\nabla)$ is said to be \emph{$Z$-alternating} or
    \emph{alternating in $Z$} if for every $v\in e\in \nabla$ either
    \begin{itemize}
        \item $e$ is a unique solution to $s(v,e)=e$ (the function
            $s(v,\bullet)$ has at most one fixpoint), or
        \item $e\in \nabla\cap E(Z)$ and $s(v,e)\in\nabla\setminus E(Z)$, or
        \item $e\in \nabla\setminus E(Z)$ and $s(v,e)\in\nabla\cap E(Z)$.
    \end{itemize}
    In
    other words, the trail $s|_{W^s_k}$ traverses edges in $Z$ and
    $\overline{Z}$ in turn for any
    $k=1,\ldots,p_s$; furthermore, at any vertex $v\in [n]$, there is at most
    one trail $s|_{W^s_k}$ which starts or ends at $v$. For example, if
    $ij\notin E(Z)$ and $s(i,ij)=ij=s(j,ij)$, then the trail $s|_{\{ij\}}$
    consists of one non-edge of $Z$.

    \medskip

    Furthermore, we say that $s\in \Pi(\nabla)$ is \emph{$Z$-alternating with at most
    $c$ exceptions} if
    \begin{equation}\label{eq:defcexceptions}
        \left|\left\{\{e,s(v,e)\}\ :\ v\in e\in \nabla,\ s(v,e)\neq e\text{ and
            }\Big(\{e,s(v,e)\}\subseteq
    \nabla\cap E(Z)\text{ or }\{e,s(v,e)\}\subseteq\nabla\setminus E(Z)\Big)\right\}\right|\le c
    \end{equation}
    and $s(v,\bullet)$ has at most one fixpoint for every $v\in [n]$. We say
    that $v$ is a site of non-alternation of $s$ in $Z$ if $\{e,s(v,e)\}$ is set
    of size 2 which is a subset of either $\nabla\cap E(Z)$ or $\nabla\setminus
    E(Z)$.
\end{definition}

\begin{definition}\label{def:XYalternating}
    Let $X,Y\in \mathcal{G}(\ell,u)$, where $[\ell,u]$ is a thin degree sequence
    interval. Denote $\nabla_{X,Y}=E(X)\triangle E(Y)$. An $s\in
    \Pi(\nabla_{X,Y})$ which is both {$X$-alternating} and {$Y$-alternating} is
    called \emph{$(X,Y)$-alternating}.
\end{definition}
\begin{lemma}
    Any pairing function $s\in \Pi(\nabla_{X,Y})$ is $X$-alternating if and only
    if $s$ is $Y$-alternating.
\end{lemma}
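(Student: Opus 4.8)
The plan is to reduce both alternation conditions to one and the same combinatorial statement, exploiting the special structure of $\nabla_{X,Y}$. The key observation is that on the symmetric difference $\nabla_{X,Y}=E(X)\triangle E(Y)$ every edge lies in exactly one of $E(X)$ and $E(Y)$. So I would first record the two elementary set identities
\[
\nabla_{X,Y}\cap E(X)=\nabla_{X,Y}\setminus E(Y),\qquad
\nabla_{X,Y}\setminus E(X)=\nabla_{X,Y}\cap E(Y),
\]
both of which are immediate from the definition of symmetric difference: an edge of $\nabla_{X,Y}$ belongs to $X$ precisely when it does not belong to $Y$.

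Next I would unfold \Cref{def:alternating} with $Z=X$ and with $Z=Y$ and substitute the identities above. For $X$-alternation, the requirement at an incidence $v\in e\in\nabla_{X,Y}$ is that either $s(v,\bullet)$ has $e$ as its unique fixpoint, or $e\in\nabla_{X,Y}\cap E(X)$ and $s(v,e)\in\nabla_{X,Y}\setminus E(X)$, or the reverse inclusion holds. Replacing $\nabla_{X,Y}\cap E(X)$ by $\nabla_{X,Y}\setminus E(Y)$ and $\nabla_{X,Y}\setminus E(X)$ by $\nabla_{X,Y}\cap E(Y)$ transforms this into: either $e$ is the unique fixpoint of $s(v,\bullet)$, or $e\in\nabla_{X,Y}\setminus E(Y)$ and $s(v,e)\in\nabla_{X,Y}\cap E(Y)$, or the reverse — which is verbatim the condition for $Y$-alternation (the second and third bullets are merely interchanged, which is irrelevant since they enter as a disjunction). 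Hence $s$ satisfies the local $X$-alternation condition at $(v,e)$ if and only if it satisfies the local $Y$-alternation condition there, and the claimed equivalence follows by quantifying over all $v\in e\in\nabla_{X,Y}$.

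I do not expect any genuine obstacle: the statement is essentially a tautology once one notes that ``being an $X$-edge of $\nabla_{X,Y}$'' and ``being a non-$Y$-edge of $\nabla_{X,Y}$'' are the same predicate (and likewise with the roles of $X$ and $Y$ swapped). The only point to handle with a little care is the fixpoint clause in \Cref{def:alternating}, but it refers solely to $s$ and $\nabla_{X,Y}$ and mentions neither $X$ nor $Y$, so it is shared by the two conditions without modification.
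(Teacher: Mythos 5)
Your proof is correct and is essentially the paper's own argument spelled out in full: the paper simply says \enquote{Trivial, since $\nabla_{X,Y}\setminus E(X)=E(Y)\setminus E(X)$ and $\nabla_{X,Y}\cap E(X)=E(X)\setminus E(Y)$,} which is the same pair of identities you use (in the equivalent forms $\nabla_{X,Y}\setminus E(X)=\nabla_{X,Y}\cap E(Y)$ and $\nabla_{X,Y}\cap E(X)=\nabla_{X,Y}\setminus E(Y)$). Your extra care about the fixpoint clause being $X$- and $Y$-independent is a nice touch but does not change the route.
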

\begin{proof}
    Trivial, since $\nabla_{X,Y}\setminus E(X)=E(Y)\setminus E(X)$ and
    $\nabla_{X,Y}\cap E(X)=E(X)\setminus E(Y)$.
\end{proof}

\begin{definition}
    Given a degree sequence interval $[\ell,u]$, for any $X,Y\in \mathcal{G}(\ell,u)$, define
    \begin{align*}
        S_{X,Y}=\Big\{ s\in \Pi(\nabla_{X,Y})\ \big|\ s\text{\ is
        $(X,Y)$-alternating}\Big\}.
    \end{align*}
\end{definition}

Recall \Cref{def:sW}. The following \textbf{key decomposition lemma} (KD-lemma) will be referred to repeatedly in this paper.
\begin{lemma}[Key decomposition lemma]\label{lemma:decomp}
    Let $[\ell,u]$ be a thin degree sequence interval, and let $X,Y\in
    \mathcal{G}(\ell,u)$, $s\in S_{X,Y}$. Then $s|_{W^s_k}$ is
    {$(X,Y)$-alternating}, and $s|_{W^s_k}$ describes an Eulerian trail on
    $W^s_k$ for any $1\le k\le p_s$. If $s|_{W^s_k}$ describes an open trail,
    then its end-vertices are (by definition) distinct, and the end-vertices of
    the trail $s|_{W^s_k}$ are disjoint from the end-vertices of any other open trail
    $s|_{W^s_j}$ ($j\neq k$).
\end{lemma}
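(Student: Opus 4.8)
The plan is to unpack the definitions and check each claim componentwise. The key object is $L(\nabla_{X,Y},s)$, whose components are paths and cycles (by the lemma already proved), and the partition $\nabla_{X,Y}=W^s_1\uplus\cdots\uplus W^s_{p_s}$ records these components. First I would observe that $s|_{W^s_k}$ is a pairing function on $W^s_k$ (this is the remark after \Cref{def:sW}), and that $L(W^s_k,s|_{W^s_k})$ is exactly the $k$-th connected component of $L(\nabla_{X,Y},s)$, hence connected; by the lemma identifying connected $L(\nabla,s)$ with Eulerian trails, $s|_{W^s_k}$ describes an Eulerian trail on $W^s_k$. The fact that $s|_{W^s_k}$ is again $(X,Y)$-alternating is immediate from \Cref{def:alternating}: the alternation condition at a vertex $v$ is a local condition on the pairs $\{e,s(v,e)\}$, and restricting $s$ to $W^s_k$ only deletes some of these pairs (and possibly turns some of them into fixpoints when exactly one of $e,s(v,e)$ lies in $W^s_k$ — but that cannot happen here, since $s(v,e)$ is a neighbour of $e$ in $L(\nabla_{X,Y},s)$ and hence lies in the same component $W^s_k$), so the at-most-one-fixpoint condition and the edge/non-edge alternation are inherited.

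The substantive part is the disjointness of end-vertices. Suppose $s|_{W^s_k}$ describes an open trail; by the definition of the trail notation $s=u_0u_1\ldots u_r$, an open trail has $u_0\neq u_r$, so its two end-vertices are distinct. For the cross-component disjointness, I would argue as follows: a vertex $v$ is an end-vertex of the trail $s|_{W^s_k}$ precisely when $s|_{W^s_k}(v,\bullet)$ has a fixpoint, i.e.\ there is an edge $e\in W^s_k$ with $v\in e$ and $s(v,e)=e$. Here is the crucial point: if $s(v,e)=e$ as a pair in $\nabla_{X,Y}$, then this fixpoint already exists in the full pairing function $s$, not just in the restriction. Since $s\in S_{X,Y}$ is $(X,Y)$-alternating, \Cref{def:alternating} demands that $s(v,\bullet)$ has at most one fixpoint for every $v\in[n]$. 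Therefore $v$ can be an end-vertex of at most one open trail $s|_{W^s_k}$, and it cannot simultaneously be an end-vertex of a second trail $s|_{W^s_j}$ with $j\neq k$, nor can it be both end-vertices of the same open trail (which re-proves $u_0\neq u_r$ in this language).

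I would then assemble these observations into the statement: the "at most one fixpoint of $s(v,\bullet)$'' clause in the definition of alternating is exactly the mechanism that enforces both (a) each open trail has two distinct endpoints and (b) the endpoint sets of distinct open trails are pairwise disjoint. The Eulerian-trail conclusion for each $W^s_k$ follows from connectedness of the component together with the already-established correspondence between connected line graphs $L(\nabla,s)$ and Eulerian trails.

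The main obstacle — really the only place where care is needed — is making sure the restriction $s|_{W^s_k}$ does not accidentally create new fixpoints that did not exist in $s$. This is where one must invoke that $W^s_k$ is a \emph{component} of $L(\nabla_{X,Y},s)$: for $v\in e\in W^s_k$, the partner $s(v,e)$ is either $e$ itself or a neighbour of $e$ in $L(\nabla_{X,Y},s)$, and in the latter case $s(v,e)\in W^s_k$ as well, so $s|_{W^s_k}(v,e)=s(v,e)$ and no genuinely new fixpoint is introduced. Once that is pinned down, every assertion in the lemma reduces to a direct reading of \Cref{def:alternating,def:nablapartition} and the trail-notation conventions, so the proof is short.
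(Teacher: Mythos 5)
Your proof is correct and follows essentially the same route as the paper: both arguments hinge on the observation that a vertex $v$ is an end-vertex of an open trail $s|_{W^s_k}$ precisely when $s(v,\bullet)$ has a fixpoint at an edge of $W^s_k$, so the at-most-one-fixpoint clause in \Cref{def:alternating} rules out both a trail returning to its start and two trails sharing an end-vertex. Your care about the restriction not introducing new fixpoints (because $s(v,e)$ stays in the component $W^s_k$ when it differs from $e$) is a point the paper leaves implicit, so if anything your write-up is a bit more explicit on that step.
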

\begin{proof}
    We have $|\deg_X(v)-\deg_Y(v)|\le 1$ for any $v\in V$. Thus the involution
    $s(v,\bullet)$ pairs the $X$-edges of $\nabla_{X,Y}$ incident to $v$ to the $Y$-edges of
    $\nabla_{X,Y}$ incident to $v$, with the exception of the at most one
    fixpoint of $s(v,\bullet)$. The closed trails must have even length, because
    $s(v,\bullet)$ pairs $X$-edges to $Y$-edges at any $v$.

    Clearly, if an open trail $s|_{W^s_i}$ both starts and ends at $v$, then $s(v,\bullet)$ has
    at least two fixpoints, which is a contradiction. Similarly, we have a
    contradiction if more than one trail terminates at some vertex $v$. Lastly,
    if $s|_{W^s_k}$ is an open trail, then the degree $\deg_{W^s_k}(v)$ is even, except
    if $v$ is one of the two end-vertices of $s|_{W^s_k}$, in which case
    $\deg_{W^s_k}(v)$ is odd.
\end{proof}

\begin{lemma}\label{lemma:flowamount}
    For any thin degree sequence interval $[\ell,u]$ on $n$ vertices and any two graphs
	$X,Y\in\mathcal{G}(\ell,u)$
    \begin{equation}\label{eq:flowamount}
		|S_{X,Y}|=\prod_{v\in
        [n]}\left\lceil\frac{\deg_{\nabla_{X,Y}}(v)}{2}\right\rceil\text{\Large\rm
    !},
	\end{equation}
    where the right hand side is the product of factorials.
\end{lemma}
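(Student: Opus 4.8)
The plan is to count the $(X,Y)$-alternating pairing functions directly, vertex by vertex, using the fact established in the Key Decomposition Lemma that at each vertex $v$ the involution $s(v,\bullet)$ pairs the $X$-edges of $\nabla_{X,Y}$ incident to $v$ with the $Y$-edges of $\nabla_{X,Y}$ incident to $v$, modulo at most one fixpoint. Write $d_X(v)=|\nabla_{X,Y}\cap E(X)|$ restricted to edges through $v$, and $d_Y(v)$ analogously; since $[\ell,u]$ is thin, $|d_X(v)-d_Y(v)|\le 1$, and $\deg_{\nabla_{X,Y}}(v)=d_X(v)+d_Y(v)$. I would first observe that a choice of $s$ is exactly a choice, independently for each vertex $v\in[n]$, of an involution $s(v,\bullet)$ on $\nabla_v$ that is $Z$-alternating (with $Z=X$, equivalently $Z=Y$ by the earlier lemma) and has at most one fixpoint — that is, a perfect matching between the $X$-edges through $v$ and the $Y$-edges through $v$ when $d_X(v)=d_Y(v)$, or a near-perfect matching leaving exactly one edge (on the larger side) fixed when $d_X(v)\ne d_Y(v)$. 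The global object $s$ is the union of these local choices, and no consistency condition links different vertices because $s(v,\bullet)$ only ever references edges incident to $v$; hence $|S_{X,Y}|=\prod_{v\in[n]} N(v)$ where $N(v)$ is the number of admissible local involutions at $v$.

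It then remains to compute $N(v)$. In the balanced case $d_X(v)=d_Y(v)=:m$, an admissible $s(v,\bullet)$ is a perfect matching between two $m$-element sets, of which there are $m!$; here $\lceil \deg_{\nabla_{X,Y}}(v)/2\rceil = \lceil 2m/2\rceil = m$, so $N(v)=m!=\lceil\deg_{\nabla_{X,Y}}(v)/2\rceil!$. In the unbalanced case, say $d_X(v)=m+1$ and $d_Y(v)=m$ (the other case is symmetric): we must pick which one of the $m+1$ $X$-edges is the fixpoint ($m+1$ choices) and then a perfect matching between the remaining $m$ $X$-edges and the $m$ $Y$-edges ($m!$ choices), giving $N(v)=(m+1)\cdot m!=(m+1)!$; and here $\deg_{\nabla_{X,Y}}(v)=2m+1$, so $\lceil (2m+1)/2\rceil = m+1$ and again $N(v)=\lceil\deg_{\nabla_{X,Y}}(v)/2\rceil!$. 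Multiplying over all $v$ yields \cref{eq:flowamount}.

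I do not anticipate a serious obstacle; the one point that needs care is justifying the independence-across-vertices claim, i.e.\ that \emph{every} choice of admissible local involutions glues to a genuine element of $\Pi(\nabla_{X,Y})$ that is $(X,Y)$-alternating, and conversely that every element of $S_{X,Y}$ arises this way. The forward direction is immediate from the definition of $\Pi(\nabla)$ (a pairing function is by definition just a family of involutions $s(v,\bullet)$, one per vertex, with no coupling) and from the definition of $Z$-alternating, which is a purely local (per-incidence) condition; the ``at most one fixpoint per vertex'' clause is also enforced locally. The converse is the content of the Key Decomposition Lemma together with the thinness hypothesis, which guarantees $|d_X(v)-d_Y(v)|\le 1$ so that a near-perfect matching with a single leftover edge is the only possibility when the two sides are unequal — in particular the fixpoint, when it exists, must lie on the larger side, which is exactly what the count above uses. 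With these two directions in place the product formula follows.
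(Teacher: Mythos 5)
Your proposal is correct and follows essentially the same approach as the paper's proof: both decompose the count vertex-by-vertex using the independence of the local involutions $s(v,\bullet)$, observe that thinness forces $|d_X(v)-d_Y(v)|\le 1$, and then count $m!$ local choices in the balanced case and $(m+1)!$ in the unbalanced case (by choosing the unique fixpoint on the larger side and matching the rest). The only difference is that you make the independence-across-vertices glueing step fully explicit, which the paper leaves implicit in the definition of a pairing function.
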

\begin{proof}
We have
\begin{equation*}
|\deg_{E(X)\setminus E(Y)}(v)-\deg_{E(Y)\setminus
E(X)}(v)|=|\deg_X(v)-\deg_Y(v)|\le 1.
\end{equation*}
If $\deg_X(v)=\deg_Y(v)$, then we have
$(\deg_X(v)-\deg_{X\cap Y}(v))!=(\frac12\deg_{\nabla_{X,Y}}(v))!$ ways to choose
$s(v,\bullet)$ such that it is an involution which maps edges of $X$ to
edges of $Y$: if $s(v,\bullet)$ had a fixpoint, then by parity it must have had another,
too, which contradicts \Cref{def:alternating}.

If $\deg_X(v)=\deg_Y(v)+1$ and $s(v,e)=e$, then $e\in E(X)\setminus E(Y)$ and
$e$ is the only fixpoint of $s(v,\bullet)$. Therefore there are
$\deg_X(v)-\deg_{X\cap Y}(v)=\frac12(\deg_{\nabla_{X,Y}}(v)+1)$ ways to choose the fixpoint, and
$(\deg_X(v)-\deg_{X\cap Y}(v)-1)!$ ways to choose the rest of the map $s(v,\bullet)$.
\end{proof}

\begin{lemma}\label{lemma:flowcexceptions}
    For any graph $Z\in\mathcal{G}(\ell,u)$ for a thin degree sequence interval
    $[\ell,u]$ and any $\nabla\subseteq\binom{[n]}{2}$, we have
    \begin{equation}\label{eq:cexceptions}
        \left|\Big\{ s\in \Pi(\nabla)\ \big|\ s\text{\ is $Z$-alternating with at most
        $c$ exceptions}\Big\}\right|\le n^{3c}\cdot\prod_{v\in
    [n]}\left\lceil\frac{\deg_{\nabla}(v)}{2}\right\rceil\text{\Large\rm !}
	\end{equation}
\end{lemma}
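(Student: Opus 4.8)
The plan is to reduce the count in \cref{eq:cexceptions} to the count of genuine alternating pairing functions (the case $c=0$, which is already handled by \Cref{lemma:flowamount} in the guise $\prod_v\lceil \deg_\nabla(v)/2\rceil!$) by a surgery argument: given an $s$ that is $Z$-alternating with at most $c$ exceptions, I would modify $s$ at each of its at most $c$ sites of non-alternation so as to obtain a genuine $Z$-alternating pairing function, and then control how many $s$ map to the same output. First I would fix the notation: for a $Z$-alternating-with-exceptions $s$, let $v_1,\dots,v_{c'}$ (with $c'\le c$) be its sites of non-alternation, each carrying a monochromatic pair $\{e_t,s(v_t,e_t)\}$ that lies entirely inside $\nabla\cap E(Z)$ or entirely inside $\nabla\setminus E(Z)$. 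At such a site I would ``cut'' the involution $s(v_t,\bullet)$ on this one pair and re-pair the four half-edges at $v_t$ (the two endpoints of the offending pair, plus possibly a fixpoint or another pair) in a way that respects the $Z/\overline Z$ alternation; since $s(v_t,\bullet)$ is an involution on $\nabla_{v_t}$, locally this is just choosing a new involution on a bounded set of half-edges, which can be done in a bounded (constant) number of ways and strictly decreases the number of sites of non-alternation.

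Carrying this out for all $c'$ sites produces a genuine $Z$-alternating pairing function $s'\in\Pi(\nabla)$, so by (the $c=0$ specialization of) the argument of \Cref{lemma:flowamount} the number of possible $s'$ is at most $\prod_{v\in[n]}\lceil\deg_\nabla(v)/2\rceil!$. It remains to bound the multiplicity of the map $s\mapsto s'$. To recover $s$ from $s'$ I must specify: (1) which vertices were the sites of non-alternation — at most $n^{c}$ choices since there are at most $c$ of them; (2) at each such vertex, which half-edges were involved and how they were originally paired — a constant number of choices, absorbable into $n^{c}$ by enlarging the implied constant, or one can be generous and bound each local choice by $n$ as well, giving another factor $n^{c}$; and (3) possibly a further index per site to disambiguate which pair among several monochromatic ones at that vertex was treated, again at most $n$ per site. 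Multiplying, each $s'$ has at most $n^{3c}$ preimages, which yields exactly \cref{eq:cexceptions}.

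The main obstacle I anticipate is bookkeeping rather than conceptual: making the local re-pairing at a site of non-alternation well-defined and checking that it genuinely reduces the number of monochromatic pairs without creating new ones at \emph{other} vertices. The danger is that re-routing two half-edges at $v_t$ changes which edges are paired at the \emph{opposite} endpoints of those edges, possibly spoiling alternation there. To handle this I would either (a) perform the surgery along a whole maximal monochromatic sub-trail so that the perturbation stays confined — replacing a monochromatic excursion by a shortcut that splices the trail back onto an alternating one — or (b) argue that the bound $n^{3c}$ is lossy enough to absorb a constant-bounded ``ripple'': each site of non-alternation is adjacent to $O(1)$ edges, so fixing it touches $O(1)$ vertices, and across $\le c$ sites this is $O(c)$ affected vertices, still recorded within the $n^{3c}$ budget. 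Since the paper only needs the inequality \cref{eq:cexceptions} (not an exact formula), the cleanest route is to be deliberately wasteful: encode the data needed to invert the surgery as a word of length $\le 3c$ over an alphabet of size $\le n$, and appeal to \Cref{lemma:flowamount} for the alternating-function count. I expect the $c$-exception setup was designed precisely so that this crude encoding suffices.
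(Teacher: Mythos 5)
Your overall strategy --- encode the exceptional data in $n^{3c}$ bits and reduce the remainder to the alternating count --- is in the same spirit as the paper's proof, which simply enumerates the (at most $c$) non-alternating pairs first and then bounds the remaining choices of $s(v,\bullet)$ per vertex. But the specific route you propose has a genuine gap: your surgery is supposed to produce a \emph{genuinely} $Z$-alternating pairing function $s'\in\Pi(\nabla)$, and then you appeal to the alternating-function count $\prod_v\lceil\deg_\nabla(v)/2\rceil!$. For arbitrary $\nabla\subseteq\binom{[n]}{2}$, such an $s'$ need not exist: if at some vertex $v$ the number of edges of $\nabla_v$ inside $E(Z)$ differs from the number outside $E(Z)$ by more than one, then no involution on $\nabla_v$ with at most one fixpoint can be alternating, and no local re-pairing at $v$ can repair this. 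In that situation the set of genuine $Z$-alternating $s'$ is empty, yet the left-hand side of~\eqref{eq:cexceptions} can be nonzero, so the map $s\mapsto s'$ you describe does not exist and the multiplicity argument collapses. Relatedly, \Cref{lemma:flowamount} is stated and proved only for $\nabla=\nabla_{X,Y}$ with $X,Y\in\mathcal{G}(\ell,u)$ of a thin interval; its equality relies on $|\deg_X(v)-\deg_Y(v)|\le 1$, which is precisely the parity/balance condition that may fail for arbitrary $\nabla$. The paper avoids both problems by \emph{not} asserting existence of a completed alternating object: having fixed the non-alternating pairs (an encoding of length $\le 3c$ over $[n]$, one vertex plus two edge-endpoints per exception), it observes that whatever edges at $v$ remain to be paired must be matched alternatingly with at most one fixpoint, and this residual count is crudely bounded by $\lceil\deg_\nabla(v)/2\rceil!$ regardless of whether a complete alternating pairing on all of $\nabla_v$ exists. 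You should reframe your argument in that direction --- bound the \emph{residual} choices per vertex after fixing the exceptional pairs --- rather than trying to land on a genuine alternating $s'$.
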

\begin{proof}
    There are at most $n^{3c}$ different choices for the set on the left hand
    side of \cref{eq:defcexceptions}. If we fix the non-alternating pairs, then
    the number of remaining choices at $s(v,\bullet)$ are still upper bounded by
    $\lceil\frac12\deg_{\nabla}(v)\rceil{!}$, thus~\cref{eq:cexceptions} holds.
\end{proof}

\subsection{The precursor}\label{sec:precur}

So far, every proof of rapid mixing for the switch Markov chain which is based
on Sinclair's method contains at its core a counting
lemma~(\citet{greenhill_switch_2014}). The purpose of the counting lemma is to
enumerate the possible auxiliary structures and parameter sets from which the
source and sink of any commodity passing through a realization $Z$ can be
recovered from. The difficult technical parts of the proofs are concerned with
the maintenance and upkeep associated to these structures. To our surprise, for
thin degree sequence intervals, by slightly tweaking these structures, the
arising technicalities can almost entirely be reduced
to~\cite{erdos_mixing_2022}, and a major shortcut is taken by this paper by
reusing these parts. A relatively long, but mostly elementary
\Cref{def:precursor} will specify the properties that we expect from the
auxiliary structures and parameter sets borrowed from~\cite{erdos_mixing_2022}.
In \Cref{sec:construction}, we will use this framework to recombine the borrowed
parts into a proof for thin degree sequence intervals.

\medskip

The decomposition in \Cref{def:nablapartition} is formally very similar to the
decomposition in~\cite[Section~4.1]{erdos_mixing_2022}. Whenever the degree
sequences of $X$ and $Y$ are identical ($\nabla=\nabla_{X,Y}$ and $s\in
S_{X,Y}$), the two decompositions are actually identical. In any other case, for
every two unit differences between the degree sequences of $X$ and $Y$ we will
utilize a hinge-flip or an edge-toggle in the multicommodity-flow between $X$
and $Y$.

\medskip

Let us now turn to defining the framework for the reduction
to~\cite{erdos_mixing_2022}. We need the following structure and in particular
the matrix $M$ to be able to find an appropriate reduction which is compatible
with the processes of~\cite{erdos_mixing_2022}.
\begin{definition}\label{def:DM}
    Let $M\in {\{0,1,2\}}^{n\times n}$ be a symmetric matrix with zero diagonal.
    For technical purposes, let us define the following set of triples:
    \begin{equation}\label{eq:DM}
        \mathfrak{D}_M =\left\{ (X,Y,s)\ \Bigg|\
            \text{\ where\ }
            \begin{cases}
                X,Y\text{ graphs on }[n],\ s\in S_{X,Y},\\
                \left\{vw\ |\ v\neq w,\ M_{vw}=2\right\}\subseteq E(X)\cap E(Y),\\
                \left\{vw\ |\ v\neq w,\ M_{vw}=0\right\}\subseteq E(\overline
                X)\cap E(\overline Y).
           \end{cases} \right\}.
    \end{equation}
\end{definition}
The next definition collects a number of properties (of the multicommodity-flow
and the auxiliary structures designed for the switch Markov chain) that we want
to preserve from~\cite{erdos_mixing_2022}.
\begin{definition}\label{def:precursor}
    We call the ordered triple $(\varUpsilon,B,\pi)$ a \emph{precursor with parameter
    $c\in \mathbb{N}$}, if the following properties hold. The objects $\varUpsilon_M$, $B_M$, and
    $\pi_M$ are functions for any symmetric matrix $M\in
    {\{0,1,2\}}^{n\times n}$ with zero diagonal, where $n\in \mathbb{N}$. We require that the domain of $\varUpsilon_M$
    satisfies
    \begin{equation}
        \dom(\varUpsilon_{M})\subseteq \mathfrak{D}_M,\label{eq:domUpsilon}
    \end{equation}
    Furthermore, for any $(X,Y,s)\in \dom(\varUpsilon_M)$, let us define two
    degree sequences:
    \begin{align*}
        \ell_{X,Y}&={\Big(\min\{\deg_X(i),\deg_Y(i)\}\Big)}_{i=1}^n,\\
        u_{X,Y}&={\Big(\max\{\deg_X(i),\deg_Y(i)\}\Big)}_{i=1}^n.
    \end{align*}
    We require that $\varUpsilon_{M}(X,Y,s)$ is a sequence of
	graphs that forms a path connecting $X$ and $Y$ in the Markov graph
	$\mathbb{G}(\ell_{X,Y},u_{X,Y})$. We require that $\pi_M$ and $B_M$ is
    defined on
    \begin{equation*}
            \dom(\pi_M)=\dom(B_M)=\{ (X,Y,s,Z)\ |\
			Z\in\varUpsilon_M(X,Y,s),\ (X,Y,s)\in\dom(\varUpsilon_M)\}.
    \end{equation*}
    Moreover:
    \begin{enumerate}[label={(\alph*)}]
		\item The length of $\varUpsilon_M(X,Y,s)$ is at most
			$c\cdot|\nabla_{X,Y}|$.\label{prop:length}
		\item The size $|(E(X)\triangle E(Z))\setminus \nabla_{X,Y}|\le c$ for any
            $Z\in\varUpsilon_M(X,Y,s)$.\label{prop:cexceptions}
		\item The matrix $M-A_Z$ is $c$-tight for any $Z\in
			\varUpsilon_{M}(X,Y,s)$.\label{prop:ctight}
		\item The pairing function $\pi_M(X,Y,s,Z)$ is a member of
            $\Pi(\nabla_{X,Y})$ and it is alternating in $Z$
			with at most $c$ exceptions.\label{prop:picexceptions}
        \item $\pi_M(X,Y,s,X)=\pi_M(X,Y,s,Y)=s$.\label{prop:pistartend}
        \item If $L(\nabla_{X,Y},s)$ is connected then
            $L(\nabla_{X,Y},\pi_M(X,Y,s,Z))$ is also
            connected.\label{prop:connected}
        \item The cardinality of
            \begin{equation*}
                \mathfrak{B}_n=\Big\{B_M(X,Y,s,Z)\ \mid\ Z\in\varUpsilon_M(X,Y,s),\
	        	\text{$M$ arbitrary},\ (X,Y,s)\in\dom(\varUpsilon_M),\
            |V(X)|=n\Big\}
            \end{equation*}
        is at most a constant times $n^c$, i.e., $|\mathfrak{B}_n|=\mathcal{O}(n^c)$.\label{prop:Bpoly}
		\item The function\label{prop:Psi}
			\begin{align*}
                \varPsi=\left\{(Z,\nabla_{X,Y},\pi_M(X,Y,s,Z),B_M(X,Y,s,Z))\mapsto
                    (X,Y,s)\ \big|\ \text{$M$ is arbitrary},\ Z\in\varUpsilon_M(X,Y,s) \right\}
            \end{align*}
            is well-defined, i.e., two different images in the co-domain are not
            assigned to the same element from the domain of
            $\varPsi$.\hfill\qedsymbol
    \end{enumerate}
\end{definition}
Typically, the value of $B_M(X,Y,s,Z)$ will be a long tuple (an ordered set of
parameters).
The exact value of $c$ is not important here, the requirements only impose a
lower bound on its value. However, it is important to note that $c$ is a constant,
independent even from the number of vertices $n$. Note also that in applications of
\Cref{def:precursor}, the matrix $M$ will not be completely arbitrary.

\begin{definition}
    A subset $\mathfrak{P}$ is a \emph{precursor domain} if it is a set of
    triples $(X,Y,s)$ such that $X$ and $Y$ have the same vertex set $[n]$ for
    some $n\in\mathbb{N}$ (where $n$ may vary) and $s\in S_{X,Y}$. We say that a
    precursor $(\varUpsilon,B,\pi)$ is defined on a \emph{precursor domain}
    $\mathfrak{P}$ if and only if for any $n\in \mathbb{N}$ and symmetric matrix
    $M\in {\{0,1,2\}}^{n\times n}$ with zero diagonal we have
    \begin{equation*}
        \dom(\varUpsilon_{M})\supseteq \mathfrak{P}\cap \mathfrak{D}_M.
    \end{equation*}
\end{definition}
Let us define two precursor domains:
\begin{align}
    \mathfrak{C}_\mathrm{thin}&=\Big\{ (X,Y,s)\ \Big|\ s\in S_{X,Y},\
L(\nabla_{X,Y},s)\text{\ is connected, and }\|\deg_X-\deg_Y\|_\infty\le
1\Big\},\label{eq:Cthin}\\
    \mathfrak{R}_\mathrm{thin}&=\Big\{ (X,Y,s)\ \Big|\ s\in S_{X,Y}\text{\ and
    }\|\deg_X-\deg_Y\|_\infty\le 1\Big\}.\label{eq:Rthin}%
\end{align}

The set $\mathfrak{C}_\mathrm{thin}$ describes the identifiers of the small parts
from which the whole multicommodity-flow will be built from. In contrast,
the multicommodity-flow was built in~\cite{erdos_mixing_2022} for each
triple in $\mathfrak{R}_\mathrm{thin}$ directly.

\begin{lemma}\label{lemma:extension}
    If there exists a precursor with parameter $c$ which is defined on
    $\mathfrak{C}_\mathrm{thin}$, then there exists a precursor on
    $\mathfrak{R}_\mathrm{thin}$ with parameter $3c$.
\end{lemma}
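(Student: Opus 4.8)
The plan is to build the promised precursor on $\mathfrak{R}_\mathrm{thin}$ by decomposing an arbitrary triple $(X,Y,s)\in\mathfrak{R}_\mathrm{thin}$ into its connected components via the partition $\nabla_{X,Y}=W^s_1\uplus\cdots\uplus W^s_{p_s}$ of \Cref{def:nablapartition}, treating each component separately with the given precursor on $\mathfrak{C}_\mathrm{thin}$, and then concatenating the resulting paths. The key point that makes this work is the KD-lemma (\Cref{lemma:decomp}): for each component $W^s_k$, the restricted trail $s|_{W^s_k}$ is $(X,Y)$-alternating, it describes an Eulerian trail on $W^s_k$ (so $L(W^s_k,s|_{W^s_k})$ is connected), and if it is an open trail, its end-vertices are disjoint from those of every other component. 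Thus if we let $X_k$ be the graph on $[n]$ whose edge set is $\bigl(E(X)\setminus\bigcup_{j>k}W^s_j\bigr)\cup\bigcup_{j>k}(W^s_j\cap E(Y))$ — i.e.\ we rewrite components one at a time from $X$ to $Y$ — then $(X_{k-1},X_k,s|_{W^s_k})$ lies in $\mathfrak{C}_\mathrm{thin}$: indeed $\nabla_{X_{k-1},X_k}=W^s_k$, the infinity-norm condition $\|\deg_{X_{k-1}}-\deg_{X_k}\|_\infty\le 1$ holds because the degree change at $v$ is exactly the parity defect of $\deg_{W^s_k}(v)$, which is at most one by the KD-lemma, and $L(W^s_k,s|_{W^s_k})$ is connected. (One must check the components are processed in their lexicographic order so everything is canonically determined by $(X,Y,s)$ alone.)

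Next I would assemble the structures. Define $\varUpsilon^{\mathrm{new}}_M(X,Y,s)$ to be the concatenation, for $k=1,\ldots,p_s$, of the paths $\varUpsilon_{M_k}(X_{k-1},X_k,s|_{W^s_k})$, where $M_k$ is the matrix obtained from $M$ by recording on the positions of $\bigcup_{j<k}W^s_j$ the edges already switched to their $Y$-state and on $\bigcup_{j>k}W^s_j$ their $X$-state (so that $M_k$ agrees with the ``frozen'' structure that the $\mathfrak{C}_\mathrm{thin}$-precursor expects; one checks $M_k\in\{0,1,2\}^{n\times n}$ is symmetric with zero diagonal, and that $(X_{k-1},X_k,s|_{W^s_k})\in\mathfrak{D}_{M_k}$ whenever $(X,Y,s)\in\mathfrak{D}_M$). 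For a graph $Z$ on the $k$-th sub-path, set $\pi^{\mathrm{new}}_M(X,Y,s,Z)$ to be the pairing function that equals $\pi_{M_k}(X_{k-1},X_k,s|_{W^s_k},Z)$ on $W^s_k$ and equals $s$ on $\nabla_{X,Y}\setminus W^s_k$; this lies in $\Pi(\nabla_{X,Y})$ because $s|_{W^s_j}\in\Pi(W^s_j)$ for each $j$ and the $W^s_j$'s are disjoint as edge sets. Set $B^{\mathrm{new}}_M(X,Y,s,Z)$ to be the pair $(B_{M_k}(X_{k-1},X_k,s|_{W^s_k},Z),k)$ together with just enough bookkeeping to say which component is ``active'' — but crucially this must use only $\mathcal{O}(1)$-sized data per component, not a list over all components, so that $|\mathfrak{B}_n|=\mathcal{O}(n^c)$ is preserved (this is where I would be careful — see below).

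Then I would verify properties \ref{prop:length}--\ref{prop:Psi} of \Cref{def:precursor} with parameter $3c$. Length: each sub-path has length $\le c|W^s_k|$, so the total is $\le c\sum_k|W^s_k|=c|\nabla_{X,Y}|\le 3c|\nabla_{X,Y}|$. Property \ref{prop:cexceptions}: on the $k$-th sub-path, $E(X)\triangle E(Z)$ differs from $E(X)\triangle E(X_{k-1})=\bigcup_{j<k}W^s_j\subseteq\nabla_{X,Y}$ by at most $|W^s_k\cup((E(X_{k-1})\triangle E(Z))\setminus\nabla_{X_{k-1},X_k})|$, and the non-$\nabla$ part is $\le c$; but $W^s_k\subseteq\nabla_{X,Y}$, so the excess over $\nabla_{X,Y}$ is still $\le c$. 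Property \ref{prop:ctight}: $M-A_Z$ differs from $M_k-A_Z$ only on positions inside $\bigcup_{j\ne k}W^s_j$ where $Z$ agrees with $X$ or $Y$; on those positions $M-A_Z\in\{-1,0,1\}$ and $M_k-A_Z=0$, so $\|(M-A_Z)-(M_k-A_Z)\|_1\le\|\deg_X-\deg_Y\|_1$... — here I need the sharper observation that $M$ and $M_k$ themselves differ only by switching the already-/not-yet-processed components between their $X$- and $Y$-states, contributing at most $2|\nabla_{X,Y}\setminus W^s_k|$... so to keep $3c$-tightness I should instead note $M_k-A_Z$ is $c$-tight and argue $M-A_Z$ is $3c$-tight by relating its nearest realization to that of $M_k-A_Z$ plus the bounded correction; I would have to check the constant. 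Properties \ref{prop:picexceptions} (exceptions localize to $W^s_k$, giving $\le c\le 3c$), \ref{prop:pistartend} (at $Z=X$ the active component is $k=1$ and $\pi=s$; at $Z=Y$, $k=p_s$ and $\pi=s$; actually at the junctions one checks consistency), \ref{prop:connected} (if $L(\nabla_{X,Y},s)$ is connected then $p_s=1$, so $\mathfrak{R}_\mathrm{thin}$ reduces to $\mathfrak{C}_\mathrm{thin}$ and there is nothing new), and \ref{prop:Psi} (well-definedness of the inversion map $\varPsi$): from $Z$ we read off which component is active via the extra bit in $B^{\mathrm{new}}$, hence recover $W^s_k$ from $\nabla_{X,Y}$ and the component order, then apply the $\mathfrak{C}_\mathrm{thin}$-precursor's $\varPsi$ on $(Z,W^s_k,\pi_M(X,Y,s,Z)|_{W^s_k},B_{M_k}(\ldots))$ to get $(X_{k-1},X_k,s|_{W^s_k})$, and finally reconstruct $(X,Y)$ because outside $W^s_k$ the graph $Z$ already tells us $E(X)\cap E(Y)$ and $E(\overline X)\cap E(\overline Y)$ on the processed/unprocessed components, while the active component's two endpoints give the remaining edges — and $s$ is reassembled from $\pi_M(X,Y,s,Z)$ outside $W^s_k$ together with $s|_{W^s_k}$.

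The main obstacle I expect is twofold and both parts are about ``localization'' rather than anything deep. First, keeping property \ref{prop:Bpoly}: the naive concatenation wants to store, for each $Z$, data about all $p_s$ components, which would blow up $|\mathfrak{B}_n|$ exponentially; the fix is to observe that once $Z$ is fixed, all processed components are frozen to their $Y$-state and all unprocessed ones to their $X$-state, so $Z$ itself (together with $\nabla_{X,Y}$, which is part of $\varPsi$'s input) encodes the component index $k$ and the split, and $B^{\mathrm{new}}$ only needs the $\mathcal{O}(1)$-bounded value $B_{M_k}(\ldots)$ of the active component — this requires showing $\nabla_{X,Y}$ and $Z$ suffice to recover $k$, which follows because the already-switched edges are exactly $E(X)\triangle E(Z)$ restricted to $\nabla_{X,Y}$ and these form a union of complete components $W^s_1,\dots,W^s_{k-1}$ plus a partial chunk of $W^s_k$. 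Second, the exact accounting of the tightness constant in property \ref{prop:ctight}: the matrices $M$ and $M_k$ disagree on $\nabla_{X,Y}\setminus W^s_k$, which can be large, so I cannot simply inherit $c$-tightness; I must argue that a realization witnessing $c$-tightness of $M_k-A_Z$ can be modified on $\nabla_{X,Y}\setminus W^s_k$ using a bounded number of changes — but the changes there are not bounded in number, so instead the right statement is that $M-A_Z$ restricted appropriately is itself near a realization because $Z$ is a realization of a degree sequence in $[\ell_{X,Y},u_{X,Y}]$-ish range, and the $\pm1$ discrepancies of $M-A_Z$ off $W^s_k$ all sit in the rows/columns of the at most two ``defect'' vertices; a careful but routine count then gives $3c$ (this is exactly the kind of bookkeeping \Cref{lemma:removeplus2s} and \Cref{lemma:ctight} are designed to absorb, and I would invoke them rather than redo it). Everything else is routine verification.
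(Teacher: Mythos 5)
Your overall strategy — restrict $(X,Y,s)$ to the components $W^s_1,\dots,W^s_{p_s}$ of $L(\nabla_{X,Y},s)$, process them one at a time, and concatenate the sub-paths, pairings and bookkeeping — is the same as the paper's. But your execution has two genuine problems, one of which you anticipate without fixing and one of which you introduce by trying to solve a non-problem.

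\textbf{The auxiliary matrix should not change.} You introduce a modified matrix $M_k$ for each stage, and then correctly observe that you can no longer transfer $c$-tightness of $M_k-A_Z$ to $3c$-tightness of $M-A_Z$, because $M$ and $M_k$ can differ on all of $\nabla_{X,Y}\setminus W^s_k$, which is not bounded. This is a real gap, but it is self-inflicted: the paper uses the \emph{same} matrix $M$ at every stage. Recall from \Cref{def:DM} that $\mathfrak{D}_M$ only constrains the positions where $M_{vw}\in\{0,2\}$, and for $(X,Y,s)\in\mathfrak{D}_M$ those positions lie outside $\nabla_{X,Y}$; since every intermediate graph $G^{X,Y}_i=X\triangle\bigcup_{j\le i}W^s_j$ agrees with $X$ and $Y$ outside $\nabla_{X,Y}$, one has $(G^{X,Y}_{k-1},G^{X,Y}_k,s|_{W^s_k})\in\mathfrak{D}_M$ automatically. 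Then the $\mathfrak{C}_\mathrm{thin}$-precursor applied with the original $M$ gives $c$-tightness of $M-A_Z$ directly, with no relay through any $M_k$.

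\textbf{Storing $k$ is harmless, and your proposed substitute does not work.} You worry that recording the active component index in $B$ would ``blow up $|\mathfrak{B}_n|$ exponentially'' and therefore try to recover $k$ from $Z$ and $\nabla_{X,Y}$ instead. The worry is misplaced: $k\in\{1,\dots,p_s\}$ with $p_s\le\binom{n}{2}$, so appending $k-1$ to $B$ multiplies the range by at most $n^2$, which is absorbed by going from parameter $c$ to $3c$. Worse, the substitute fails: $\varPsi$ receives only $(Z,\nabla_{X,Y},\pi,B)$ and \emph{not} $X$, so you cannot compute $E(X)\triangle E(Z)$, which is the very quantity your recovery argument relies on. Even setting that aside, \Cref{def:precursor}\ref{prop:cexceptions} permits up to $c$ exceptional edges of $E(X_{k-1})\triangle E(Z)$ outside $W^s_k$, and those can land inside the other components $W^s_i$ ($i\neq k$), so processed/unprocessed components are not cleanly identifiable from $Z$ and $\nabla_{X,Y}$ alone. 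The paper simply stores $k-1$ in $B_M(X,Y,s,Z)$ (\cref{eq:Bextend}), which makes $\varPsi$ well-defined without any circularity.
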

\begin{proof}
	We will show that the precursor can be extended so that it is also defined
    on $\mathfrak{R}_\mathrm{thin}$ without violating \Cref{def:precursor}.
    For any $(X,Y,s)\in \mathfrak{R}_\mathrm{thin}\cap \mathfrak{D}_M$, we construct a path in the
    Markov graph of $\mathbb{G}(\ell_{X,Y},u_{X,Y})$, where $[\ell_{X,Y},u_{X,Y}]$ is the smallest
    degree sequence interval that contains both $\deg_X$ and $\deg_Y$.
    By the thinness of $\mathcal{I}$, we have $|\ell(i)-u(i)|\le 1$ for every
    $i\in [n]$.
    According to \Cref{def:nablapartition} and the KD-lemma
    (\Cref{lemma:decomp}), any $s\in S_{X,Y}$ partitions
    $\nabla_{X,Y}=E(X)\triangle E(Y)$ into edge sets of {$(X,Y)$-alternating}
    trails, let that decomposition be
    \begin{equation*}
        \nabla_{X,Y}=W_1^s\uplus W_2^s\uplus\cdots\uplus W_{p_s}^s.
    \end{equation*}
    Let
    \begin{equation*}
        G^{X,Y}_k=X\triangle \bigcup_{i=1}^{k}W_i^s,
    \end{equation*}
    so that $G^{X,Y}_0=X$ and $G^{X,Y}_{p_s}=Y$.
    By definition, $s|_{W^s_k}$ is connected, so $(G^{X,Y}_{k-1},G^{X,Y}_{k},s|_{W^s_k})\in
    \mathfrak{C}_\mathrm{thin}$ for $k=1,\ldots,p_s$. Let us confirm that
    $G^{X,Y}_k\in \mathcal{G}(\ell,u)$. If $s_k$ is a closed trail, then the
    degree sequences of $G^{X,Y}_k$ and $G^{X,Y}_{k-1}$ are identical. If $s_k$
    is an open trail whose end-vertices are $v$ and $w$, then the degree
    sequences of $G^{X,Y}_{k}$ and
    $G^{X,Y}_{k-1}$ differ by 1 precisely on $v$ and $w$; since
    these end-vertices are distinct from any other end-vertices of another open
    trail $s_j$, such a change of the degree of $v$ and $w$ not occur for any
    other $k$. Thus the degree $v$ satisfies:
    \begin{equation}\label{eq:ext_degv}
        \deg_{G^{X,Y}_i}(v)\in\{\deg_X(v),\deg_Y(v)\}\text{ for any
        }i=1,\ldots,p_s\text{\ and any }v\in [n],
    \end{equation}
    and so $\deg_{G^{X,Y}_i}\in [\ell_{X,Y},u_{X,Y}]$.

    \medskip

    It is easy to see that
    $(G^{X,Y}_{k-1},G^{X,Y}_{k},s|_{W^s_k})\in\mathfrak{D}_M$. If $e\in E(X)\cap
    E(Y)$, then $e\notin W^s_i$ for any $i$. Similarly, if $e\in
    E(\overline{X})\cap E(\overline{Y})$, then $e\notin W^s_i$ for any $i$.

    \medskip

    We may now define
    $\varUpsilon_M$ on $\mathfrak{R}_\mathrm{thin}$ recursively: concatenate the
    sequences $\varUpsilon_M(G^{X,Y}_{k-1},G^{X,Y}_k,s|_{W_k^s})$ in increasing
    order of $k$ to obtain
    \begin{equation}
        \varUpsilon_M(X,Y,s)={\left(\varUpsilon_M\left(G^{X,Y}_{k-1},G^{X,Y}_k,s|_{W_k^s}\right)\right)}_{k=1}^{p_s},
    \end{equation}
    where the concatenation keeps only one of the last and first element of consecutive sequences. For $Z\in
    \varUpsilon_M\left(G^{X,Y}_{k-1},G^{X,Y}_k,s|_{W^s_k}\right)$ (take the maximal $k$ such
    that the relation holds) let
    \begin{align}
		\pi_M(X,Y,s,Z)&=\left(\bigcup_{i=1}^{k-1}s|_{W^s_i}\right)\cup
        \pi_M\left(G^{X,Y}_{k-1},G^{X,Y}_k,s|_{W^s_k},Z\right)\cup\left(\bigcup_{i=k+1}^{p_s}
        s|_{W^s_i}\right)\label{eq:piextend}\\
		B_M(X,Y,s,Z)&=\left(k-1,B_M\left(G^{X,Y}_{k-1},G^{X,Y}_k,s|_{W^s_k},Z\right)\right)\label{eq:Bextend}
    \end{align}

    We claim that the extended functions provide a precursor on
    $\mathfrak{R}_\mathrm{thin}$. Let us check the non-trivial properties of
    \Cref{def:precursor}. Suppose that $Z\in
    \varUpsilon_M\left(G^{X,Y}_{k-1},G^{X,Y}_k,s|_{W^s_k}\right)$. Then $\deg_Z\in
    [\ell_{X,Y},u_{X,Y}]$, since $\deg_{G^{X,Y}_{k-1}},\deg_{G^{X,Y}_{k}}\in
    [\ell_{X,Y},u_{X,Y}]$.

    \emph{Checking \Cref{def:precursor}\ref{prop:cexceptions}.}
    By \Cref{def:precursor}\ref{prop:cexceptions},
    $\left|\left(E\left(G^{X,Y}_{k-1}\right)\triangle E(Z)\right)\setminus
    W^s_k\right|\le c$, and
    \begin{equation}\label{eq:nonnablachanged}
        (E(X)\triangle E(Z))\setminus \nabla_{X,Y}=\left(E\left(G^{X,Y}_{k-1}\right)\triangle
        E(Z)\triangle \bigcup_{i=1}^{k-1} W^s_i\right)\setminus
        \nabla_{X,Y}\subseteq \left(E\left(G^{X,Y}_{k-1}\right)\triangle
        E(Z)\right)\setminus
        W^s_k,
    \end{equation}
    therefore the LHS has cardinality at most $c$ as
    well.\hfill$\qedsymbol_{\ref{prop:cexceptions}}$

    \medskip

    \emph{Checking \Cref{def:precursor}\ref{prop:ctight}.} The precursor
    property holds for $\mathfrak{C}_\mathrm{thin}$ and
    $\left(G^{X,Y}_{k-1},G^{X,Y}_k,s_k\right)\in\mathfrak{C}_\mathrm{thin}\cap
    \mathfrak{D}_M$,
    therefore $M-A_Z$ is $c$-tight.

    \medskip

    \emph{Checking \Cref{def:precursor}\ref{prop:picexceptions}.}
    By~\cref{eq:nonnablachanged}, $\pi_M(X,Y,s,Z)$
    alternates in $Z$ with at most $2c$ extra exceptions on top of the $c$
    non-alternations of $\pi_M\left(G^{X,Y}_{k-1},G^{X,Y}_k,s|_{W^s_k},Z\right)$.
    \hfill$\qedsymbol_{\ref{prop:picexceptions}}$

    \medskip

    \emph{Checking \Cref{def:precursor}\ref{prop:Bpoly}.}
    The cardinality of the range of $B_M(X,Y,s,Z)$ grows by a factor of at most
    $n^2$, due to the one extra integer $k-1$.
    \hfill$\qedsymbol_{\ref{prop:Bpoly}}$

    \medskip

    \emph{Checking \Cref{def:precursor}\ref{prop:Psi}.}
    The last missing piece to proving that the extended functions are a
    precursor on $\mathfrak{R}_\mathrm{thin}$ is showing that $\varPsi$ is
    well-defined on the larger domain. By
    \Cref{def:precursor}\ref{prop:connected}, the connected components of
    $L(\nabla_{X,Y},\pi_M(X,Y,s,Z))$ determine $W^s_i$ and $s|_{W^s_i}$ for any
    $i\neq k$, and also $\pi_M(G^{X,Y}_{k-1},G^{X,Y}_k,s|_{W^s_k},Z)$ and
    $W^s_k$.

    \medskip

    The number $k-1$ is recorded in $B_M(X,Y,s,Z)$ (see \cref{eq:Bextend}), which
    is an argument of $\varPsi$. Knowing $k$, we can select
    $\pi_M(G^{X,Y}_{k-1},G^{X,Y}_k,s|_{W^s_k},Z)$ from the components of
    $L(\nabla_{X,Y},\pi_M(X,Y,s,Z))$, see \cref{eq:piextend}. Because the
    original functions provide a precursor on
    $\mathfrak{C}_\mathrm{thin}$, the original $\varPsi$ function is well-defined,
    so the following value
    \begin{equation*}
		\varPsi\Bigg(Z,W^s_k,\pi_M\left(G^{X,Y}_{k-1},G^{X,Y}_k,s|_{W^s_k},Z\right),
		B_M\left(G^{X,Y}_{k-1},G^{X,Y}_k,s|_{W^s_k},Z\right)\Bigg)=
		\left(G^{X,Y}_{k-1},G^{X,Y}_k,s|_{W^s_k}\right).
    \end{equation*}
    is determined.
    Since
    \begin{align*}
        E(X)=E\left(G^{X,Y}_{k-1}\right)\triangle\bigcup_{i=1}^{k-1}W^s_i,\qquad\qquad
        E(Y)=E(X)\triangle \nabla_{X,Y},\qquad\qquad s=\bigcup_{i=1}^{p_s} s|_{W^s_i},
    \end{align*}
    we have shown that $\varPsi$ is well-defined even on the extended
    domain. %
\end{proof}
In the proof of \Cref{lemma:extension}, we extensively used the fact that the degree
sequence intervals in $\mathcal{I}$ are thin.
\begin{theorem}\label{thm:precursor}
    Let $\mathcal{I}$ be a set of weakly $P$-stable degree sequence intervals.
    If there exists a precursor on $\mathfrak{R}_\mathrm{thin}$ with parameter
    $c$ then the degree interval Markov chain $\mathcal{G}(\ell,u)$ is rapidly
    mixing for any $[\ell,u]\in\mathcal{I}$.
\end{theorem}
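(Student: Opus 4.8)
The plan is to feed the precursor on $\mathfrak{R}_\mathrm{thin}$ into Sinclair's machinery (\Cref{thm:sinclair}) exactly as in~\cite{erdos_mixing_2022}, with the $c$-tight matrix count (\Cref{lemma:ctight}) replacing the $P$-stability count used for the switch chain. First I would check that $\mathbb{G}(\ell,u)$ is irreducible, symmetric, reversible and lazy so that \Cref{thm:sinclair} applies: laziness and symmetry are immediate from the definition of the chain, reversibility follows from symmetry together with the uniform stationary distribution $\sigma\equiv|\mathcal{G}(\ell,u)|^{-1}$, and irreducibility follows because switches alone connect all realizations of any fixed $d\in[\ell,u]$ while hinge-flips/edge-toggles connect realizations of adjacent degree sequences in the (connected) interval.

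Next I would define the multicommodity-flow. For an ordered pair $X,Y\in\mathcal{G}(\ell,u)$, note $\|\deg_X-\deg_Y\|_\infty\le 1$ since both lie in a thin interval, so every $s\in S_{X,Y}$ gives $(X,Y,s)\in\mathfrak{R}_\mathrm{thin}$; route an equal share $\sigma(X)\sigma(Y)/|S_{X,Y}|$ of the $X$-to-$Y$ commodity along each canonical path $\varUpsilon_M(X,Y,s)$ (for a suitable fixed choice of $M$, e.g.\ $M$ the matrix that is $2$ on $E(X)\cap E(Y)$ and $0$ on $E(\overline X)\cap E(\overline Y)$ and $1$ elsewhere, so that $(X,Y,s)\in\mathfrak{D}_M$). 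By \Cref{def:precursor}\ref{prop:length} every such path has length at most $c\cdot|\nabla_{X,Y}|=O(n^2)$, giving $\ell(f)=O(n^2)$, and $\log|\mathbb{G}(\ell,u)|=O(n\log n)$, so the only thing left to control is $\rho(f)$.

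The heart of the argument is the load bound. Fix an oriented edge $(Z,Z')$ of the Markov graph and a path $\gamma=\varUpsilon_M(X,Y,s)$ passing through $Z$. I would show that the triple $(X,Y,s)$ is recovered from the data $\bigl(Z,\nabla_{X,Y},\pi_M(X,Y,s,Z),B_M(X,Y,s,Z)\bigr)$ via the well-defined map $\varPsi$ of \Cref{def:precursor}\ref{prop:Psi}; so it suffices to bound the number of possible tuples of this data. The graph $Z$ is the given one; $\pi_M(X,Y,s,Z)\in\Pi(\nabla_{X,Y})$ is $Z$-alternating with at most $c$ exceptions, so by \Cref{lemma:flowcexceptions} the pair $(\nabla,\pi)$ ranges over at most $n^{3c}\prod_v\lceil\tfrac12\deg_\nabla(v)\rceil!$ choices for each fixed $\nabla$; and $B_M(X,Y,s,Z)$ ranges over the set $\mathfrak{B}_n$ of size $O(n^c)$ by \Cref{def:precursor}\ref{prop:Bpoly}. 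Crucially, $M-A_Z$ is $c$-tight by \Cref{def:precursor}\ref{prop:ctight}, and its row-sums equal $\deg_M-\deg_Z=\deg_X+\deg_Y-\deg_Z$ which (since $\deg_X,\deg_Y\in[\ell,u]$ thin) lie in a slightly enlarged but still $O(1)$-bounded interval around $[\ell,u]$; hence by \Cref{lemma:ctight} the number of admissible $M$ (equivalently, of admissible $\nabla$'s compatible with $Z$, once we recover $A_X+A_Y=M+A_Z$ and then split it) is at most $n^{2c}p(n)|\mathcal{G}(\ell,u)|$. Multiplying these counts, summing $\prod_v\lceil\tfrac12\deg_\nabla(v)\rceil! = |S_{X,Y}|$ against the per-path flow $\sigma(X)\sigma(Y)/|S_{X,Y}|$ (so the factorials cancel, exactly as in~\cite{erdos_mixing_2022}), and dividing by $\sigma(Z)\Pr(Z\to Z')\ge |\mathcal{G}(\ell,u)|^{-1}\cdot\Omega(n^{-4})$, yields $\rho(f)\le \mathrm{poly}(n)$.

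The main obstacle is making the counting in the previous paragraph line up precisely with the bookkeeping of~\cite{erdos_mixing_2022}: one must verify that once $Z$, $\nabla_{X,Y}$, $\pi_M(X,Y,s,Z)$ and $B_M(X,Y,s,Z)$ are known, the matrix $M$ (and hence $A_X+A_Y$, and then, using $\pi$ to split $\nabla$ into its $X$- and $Y$-parts, the triple $(X,Y,s)$) is uniquely determined — this is exactly the content of $\varPsi$ being well-defined, but one has to be careful that the extra $c$ exceptions in $\pi$ and the extra $c$ "non-$\nabla$" changes in \ref{prop:cexceptions} are absorbed into the $n^{O(c)}$ overhead rather than destroying injectivity. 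I would handle this by invoking \Cref{def:precursor}\ref{prop:Psi} verbatim for $\mathfrak{R}_\mathrm{thin}$ (which exists by hypothesis, or via \Cref{lemma:extension} from $\mathfrak{C}_\mathrm{thin}$), so that no new recovery argument is needed; the contribution of this paper's \Cref{thm:precursor} is then simply to assemble the already-established pieces — \Cref{lemma:ctight}, \Cref{lemma:flowamount}, \Cref{lemma:flowcexceptions}, and the precursor axioms — into the polynomial bound on $\rho(f)\cdot\ell(f)\cdot(\log|\mathbb{G}|-\log\varepsilon)$ demanded by \Cref{thm:sinclair}, with the degree of the resulting polynomial depending only on $c$ and on the stability polynomial $p$ of $\mathcal{I}$.
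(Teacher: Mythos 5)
Your proposal follows essentially the same route as the paper's own proof: route $\sigma(X)\sigma(Y)/|S_{X,Y}|$ per pairing function along the precursor path, bound $\ell(f)$ by \Cref{def:precursor}\ref{prop:length}, and bound $\rho(f)$ by counting $(\nabla,\pi,B)$ tuples using \Cref{lemma:ctight}, \Cref{lemma:flowcexceptions}, and \Cref{def:precursor}\ref{prop:Bpoly}, with the factorial cancellation from \Cref{lemma:flowamount}. One small refinement worth noting: your claim that the row-sums of $M-A_Z$ land in a ``slightly enlarged'' interval is weaker than what actually holds and what is needed — since the precursor forces $\deg_Z\in[\ell_{X,Y},u_{X,Y}]$ and $\deg_X+\deg_Y=\ell_{X,Y}+u_{X,Y}$ coordinate-wise, one gets $\deg_{M-A_Z}=\deg_X+\deg_Y-\deg_Z\in[\ell_{X,Y},u_{X,Y}]\subseteq[\ell,u]$ exactly, which is precisely the hypothesis of \Cref{lemma:ctight}; also, your ``suitable choice of $M$'' is just $A_X+A_Y$, which is indeed the choice the paper makes.
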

\begin{proof}
    This proof is not new and fairly straightforward, but it is presented for
    the sake of completeness. The core of this approach had already appeared in
    the paper of~\citet{kannan_simple_1999}. We will practically repeat the
    skeleton of the proof of~\cite{erdos_mixing_2022} using the definitions of
    the precursor, which hides the majority of the technical difficulties. 
    We will take $M=A_X+A_Y$, but we want to be explicit about the dependence on
    $X$ and $Y$ even when $M$ appears as an index, so let $X+Y$ denote the
    matrix $A_X+A_Y$ in this proof.

    \medskip

    Let $[\ell,u]\in\mathcal{I}$, where $\ell$ and $u$ are degree sequences on
    $[n]$. Let us define the multicommodity-flow $f$ on the Markov-graph of
    $\mathbb{G}(\ell,u)$: for every $X,Y\in \mathcal{G}(\ell,u)$ and $s\in
    S_{X,Y}$, send ${\sigma(X)\sigma(Y)}/{|S_{X,Y}|}$ amount of flow on
    $\varUpsilon_{X+Y}(X,Y,s)$. The total flow in $f$ from $X$ to $Y$ sums to
    $\sigma(X)\sigma(Y)$.
	
	\medskip

    Let us recall~\cref{eq:sinclair}:
    \begin{equation}\label{eq:mixingtime1}
        \tau_{\mathbb{G}(\ell,u)}(\varepsilon)\le \rho(f)\,\ell(f)
        \left(\log|\mathbb{G}(\ell,u)|-\log \varepsilon\right)\le \rho(f)\,
        \ell(f) \left(\binom{n}{2}-\log \varepsilon\right).
    \end{equation}
    By \Cref{def:precursor}\ref{prop:length}, $\ell(f)\le c\cdot\binom{n}{2}$.
    It only remains to show that $\rho(f)$ is polynomial in $n$. Continuing
    \cref{eq:rho} with the substitution $\mathbb{G}=\mathbb{G}(\ell,u)$:
    \begin{align}
        \rho(f)=&\max_{ZW\in E(\mathbb{G})}\frac{1}{\sigma(Z)\Pr_\mathbb{G}(Z\to W)}
        \sum_{\substack{ X,Y\in \mathcal{G}(\ell,u),\ s\in S_{X,Y}\\ ZW\in
        E(\varUpsilon_{X+Y}(X,Y,s))}}f(\varUpsilon_{X+Y}(X,Y,s))\nonumber\\
        \rho(f)\le &\max_{ZW\in E(\mathbb{G})}\frac{1}{\sigma(Z)\cdot 6/n^4}
        \sum_{\substack{ X,Y\in \mathcal{G}(\ell,u),\ s\in S_{X,Y}\\ ZW\in
        E(\varUpsilon_{X+Y}(X,Y,s))}}\frac{\sigma(X)\sigma(Y)}{|S_{X,Y}|}\nonumber\\
        \rho(f)\le &\frac{n^4}{6|\mathcal{G}(\ell,u)|}\cdot\max_{ZW\in
        E(\mathbb{G})}
        \sum_{\substack{ X,Y\in \mathcal{G}(\ell,u),\ s\in S_{X,Y}\\ ZW\in
        E(\varUpsilon_{X+Y}(X,Y,s))}}\frac{1}{|S_{X,Y}|}\nonumber\\
        \rho(f)\le &\frac{n^4}{6|\mathcal{G}(\ell,u)|}\cdot\max_{Z\in
        V(\mathbb{G})}
        \sum_{\substack{ X,Y\in \mathcal{G}(\ell,u),\ s\in S_{X,Y}\\ Z\in
        \varUpsilon_{X+Y}(X,Y,s)}}\frac{1}{|S_{X,Y}|}\label{eq:mixingtime2}
    \end{align}
    According to \Cref{def:precursor}\ref{prop:Psi}, given $Z$, $\nabla_{X,Y}$,
    $\pi_{X+Y}(X,Y,s,Z)$, and $B_{X+Y}(X,Y,s,Z)$, the function $\varPsi$
    determines $(X,Y,s)$. Therefore the relation $Z\in \varUpsilon_{X+Y}(X,Y,s)$ is
    equivalent to saying that there exists a triple $(\nabla,\pi,B)$ such that
    $(Z,\nabla,\pi,B)\in \varPsi^{-1}(X,Y,s)$:
    \begin{equation}
        \rho(f)\le \frac{n^4}{6|\mathcal{G}(\ell,u)|}\cdot\max_{Z\in
        V(\mathbb{G})}\sum_{\substack{(Z,\nabla,\pi,B)\in \varPsi^{-1}(X,Y,s)}}\frac{1}{|S_{X,Y}|}\label{eq:mixingtime3}
    \end{equation}
    Next, we use \Cref{lemma:flowamount}, which shows that $|S_{X,Y}|$ is
    determined by $\nabla_{X,Y}$, and its value does not depend directly on $X$
    or $Y$.
    \begin{equation}
        \rho(f)\le \frac{n^4}{6|\mathcal{G}(\ell,u)|}\cdot\max_{Z\in
        V(\mathbb{G})}\sum_{\substack{(Z,\nabla,\pi,B)\in
    \varPsi^{-1}(X,Y,s)}}\quad\prod_{v\in
        [n]}{\left(\left\lceil\frac{\deg_{\nabla}(v)}{2}\right\rceil\text{\Large\rm
    !}\right)}^{-1}\label{eq:mixingtime4}
    \end{equation}
    Given $Z$, the matrix $\widehat M(X,Y,Z)$ (\Cref{def:widehatM}) determines
    $\nabla_{X,Y}=E(X)\triangle E(Y)$: the edges
    that belong to $\nabla_{X,Y}$ are precisely those where the sum of the
    adjacency matrices $A_X+A_Y=\widehat M(X,Y,Z)+A_Z$ takes 1. Furthermore, by
    a property of the precursor, for
    any $Z\in \varUpsilon_{X+Y}(X,Y,s)$, we have $\deg_Z\in
    [\ell_{X,Y},u_{X,Y}]$, therefore
    \begin{equation*}
        \deg_{\widehat M(X,Y,Z)}=\deg_{A_X+A_Y-A_Z}=\deg_X+\deg_Y-\deg_Z\in [\ell_{X,Y},u_{X,Y}]\subseteq
        [\ell,u].
    \end{equation*}
    Now using that $\widehat M(X,Y,Z)$ is $c$-tight, it follows from
    \Cref{lemma:ctight,lemma:flowcexceptions} that
    \begin{equation}
        \rho(f)\le \frac{n^4}{6|\mathcal{G}(\ell,u)|}\cdot\max_{Z\in
        V(\mathbb{G})}\sum_{B\in \mathfrak{B}_n} n^{5c}\cdot p(n)\cdot |\mathcal{G}(\ell,u)|\le
        \frac16n^{5c+4}\cdot p(n)\cdot |\mathfrak{B}_n|,\label{eq:mixingtime5}
    \end{equation}
    where the right hand side is dominated by a polynomial of $n$
    (according to \Cref{def:precursor}\ref{prop:Bpoly}). In conclusion, the mixing time in
    \cref{eq:mixingtime1} is polynomial.
\end{proof}

To prove \Cref{thm:main}, it only remains to construct a precursor on
$\mathfrak{C}_\mathrm{thin}$. The next section proceeds with the construction in
two separate stages.

\section{Constructing the precursor}\label{sec:construction}

We will construct a precursor on $\mathfrak{C}_\mathrm{thin}$ for any weakly
$P$-stable thin set of degree sequence intervals $\mathcal{I}$ in two stages.
In the first stage, we show that there exists a precursor on
$\mathfrak{C}_\mathrm{id}$ (see \Cref{def:Ct}),
and then we will extend this precursor to $\mathfrak{C}_\mathrm{thin}$ in the
second stage. Then we will apply \Cref{lemma:extension} and \Cref{thm:precursor}
to prove \Cref{thm:main}.

\subsection{Stage 1: closed trails}
\begin{definition}\label{def:Ct} Let us define
\begin{align*}
    \mathfrak{C}_\mathrm{id}&=\Big\{ (X,Y,s)\ \Big|\ s\in S_{X,Y},\
L(\nabla_{X,Y},s)\text{\ is connected, and }\deg_X=\deg_Y\Big\},\\
\mathfrak{R}_\mathrm{id}&=\Big\{ (X,Y,s)\ \Big|\ s\in S_{X,Y}\text{\ and }
        \deg_X=\deg_Y\Big\}.
\end{align*}
\end{definition}
The graph $L(\nabla_{X,Y},s)$ is a cycle for any $(X,Y,s)\in
\mathfrak{C}_\mathrm{id}$, because the degree sequences of $X$ and
$Y$ are identical. To handle this case, a large machinery was developed
in~\cite{erdos_mixing_2022}. However, there the range of auxiliary matrices $M$
was much smaller. Because of the larger range of auxiliary matrices in the
current paper, we had to introduce and explicitly define the precursor.
Therefore, we unfortunately need to repeat some parts of the proof
of~\cite{erdos_mixing_2022} to obtain those claims in the desired generality.
The following lemma collects the necessary technical lemmas proved
in~\cite{erdos_mixing_2022}.
\begin{lemma}\label{lemma:Cthin}
    There exists a precursor on $\mathfrak{C}_\mathrm{id}$ with
    parameter $c=12$.
\end{lemma}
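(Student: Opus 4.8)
The plan is to import the machinery of~\cite{erdos_mixing_2022} verbatim and package it into the language of \Cref{def:precursor}. Recall that for $(X,Y,s)\in\mathfrak{C}_\mathrm{id}$ we have $\deg_X=\deg_Y$, so $L(\nabla_{X,Y},s)$ is a single cycle, i.e.\ $s$ is a closed Eulerian trail on $\nabla_{X,Y}$; this is precisely the situation handled in~\cite{erdos_mixing_2022}, where the multicommodity-flow for the switch Markov chain was built by pushing flow along a ``sweep'' of switches that transforms $X$ into $Y$ one step of the trail at a time. First I would set $\varUpsilon_M(X,Y,s)$ to be exactly the canonical switch-sequence path produced there (it lives in $\mathbb{G}(\ell_{X,Y},u_{X,Y})=\mathbb{G}(\deg_X,\deg_X)$ since every intermediate graph has the common degree sequence), discarding the states that are not switch-neighbours only if necessary; then set $\pi_M(X,Y,s,Z)$ to be the pairing function tracking ``how far along the trail'' the transformation has progressed at the intermediate graph $Z$, and $B_M(X,Y,s,Z)$ to be the tuple of bookkeeping parameters (the analogue of Kannan--Tetali--Vempala's and Greenhill's auxiliary data: which edge of the trail is currently being processed, the at most constantly many ``defect'' positions, orientation bits, etc.) that~\cite{erdos_mixing_2022} already uses to recover $(X,Y,s)$ from $Z$.

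Next I would verify properties \ref{prop:length}--\ref{prop:Psi} of \Cref{def:precursor} one at a time, citing the corresponding estimates from~\cite{erdos_mixing_2022}. The length bound \ref{prop:length} is immediate because a single switch sweep uses $O(|\nabla_{X,Y}|)$ switches; \ref{prop:cexceptions} holds because the switch-sequence only toggles edges inside $\nabla_{X,Y}$ together with a bounded number of ``borrowed'' extra edges; \ref{prop:ctight} is where \Cref{lemma:removeplus2s} enters — the auxiliary matrix $M-A_Z=\widehat M(X,Y,Z)$ has at most a few $+2$ and $-1$ entries concentrated on one vertex, so it is $5$-tight, and after absorbing the extra slack from $M\in\{0,1,2\}^{n\times n}$ (rather than an adjacency matrix) one still gets $c$-tightness for $c=12$. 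Properties \ref{prop:picexceptions}, \ref{prop:pistartend}, \ref{prop:connected} are structural statements about the pairing function $\pi_M$ that mirror the corresponding invariants in~\cite{erdos_mixing_2022} (the trail stays connected throughout the sweep, and $\pi$ agrees with $s$ at the endpoints $Z=X$ and $Z=Y$), and \ref{prop:Bpoly} holds because the bookkeeping tuple has constantly many coordinates each ranging over $O(n)$ values. Finally \ref{prop:Psi}, the well-definedness of $\varPsi$, is exactly the content of the counting lemma of~\cite{erdos_mixing_2022}: from $Z$, $\nabla_{X,Y}$, the pairing $\pi_M(X,Y,s,Z)$, and the parameters $B_M(X,Y,s,Z)$ one reconstructs the full trail and hence $X$, $Y$, and $s$.

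The main obstacle I anticipate is not conceptual but one of \emph{generality}: in~\cite{erdos_mixing_2022} the auxiliary matrix was always of the form $A_X+A_Y-A_Z$ with $\deg_X=\deg_Y$ and took a restricted set of values, whereas \Cref{def:precursor} demands that $\varUpsilon_M$, $\pi_M$, $B_M$ be defined for an \emph{arbitrary} symmetric zero-diagonal $M\in\{0,1,2\}^{n\times n}$ subject only to the membership constraint $(X,Y,s)\in\mathfrak{D}_M$ (the $M_{vw}=2$ positions forced into $E(X)\cap E(Y)$ and the $M_{vw}=0$ positions forced out of both). So the real work is to re-run the arguments of~\cite{erdos_mixing_2022} while treating those forced edges/non-edges as ``frozen'' and checking that the switch sweep, the tightness bound, and the reconstruction all survive; I expect this to cost a constant-factor blow-up in $c$ (hence the value $c=12$) but no new ideas. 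I would organize this as a sequence of short sub-lemmas, each quoting the relevant numbered lemma from~\cite{erdos_mixing_2022} and noting the (routine) modification needed to accommodate the frozen entries and the wider range of $M$.
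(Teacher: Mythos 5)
Your approach is the right one — the paper also proceeds by packaging the machinery of~\cite{erdos_mixing_2022} into the precursor framework: $\varUpsilon_M(X,Y,s)$ is the concatenation of $\textsc{Sweep}$ switch-sequences along the primitive-circuit decomposition of the single closed trail, $\pi_M=\pi_{Z'}$ from Lemma~5.15 there, and $B_M$ is the parameter tuple from Lemma~5.21, with $\varPsi$ well-defined by Lemma~5.22. You also correctly spot the one genuine source of friction, namely that $M$ is no longer forced to equal $A_X+A_Y$.

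But your proposal does not actually resolve that friction, and this is where the gap lies. You write that one should ``treat the forced edges/non-edges as frozen and re-run the arguments,'' but the matrix $M$ enters the tightness argument in a very specific place: \Cref{lemma:removeplus2s} only fires if the row-sum in $M|_{V\times V}$ is \emph{minimized at the distinguished vertex} $v$. In~\cite{erdos_mixing_2022} the distinguished vertex is the \emph{cornerstone} of each $\textsc{Sweep}$, and there it was chosen to minimize row-sums of $A_X+A_Y-A_{X\triangle\cdots}$. In the present setting the cornerstone must instead be the vertex of $V(C_r)$ minimizing the row-sum of $\bigl(M - A_{X\triangle\uplus_{i<r}C_i}\bigr)\big|_{V(C_r)\times V(C_r)}$ — i.e.\ the choice of cornerstone must track $M$, not $A_X+A_Y$. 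This is the only place the index $M$ enters $\varUpsilon_M$, and without re-targeting the cornerstone in this way, you cannot invoke \Cref{lemma:removeplus2s} and the $c$-tightness claim \ref{prop:ctight} fails. Your ``freeze the forced entries'' heuristic does not by itself produce this choice.

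A secondary issue is the accounting for $c=12$. You attribute the increase from $5$ to $12$ to tightness slack coming from the wider range of $M$. In fact the tightness bound one gets is $7$ (the $5$ from \Cref{lemma:removeplus2s} plus one extra switch when $R_Z$ falls under case (e) of Lemma~2.7 of~\cite{erdos_mixing_2022}, giving $7$-tightness). The constant $12$ comes from property \ref{prop:picexceptions}: $\pi_{Z'}$ alternates in $Z$ with at most $4+2|R_Z|\le 12$ exceptions. These are different estimates and it is worth keeping them distinct, since the first is what constrains the $c$-tightness in \Cref{lemma:ctight} and the second constrains the enumeration in \Cref{lemma:flowcexceptions}.
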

\emph{Proof.} Let $(X,Y,s)\in\mathfrak{C}_\mathrm{id}$ be arbitrary
with $X,Y\in\mathcal{G}(d)$. Since $s$ is an ${(X,Y)}$-alternating closed trail,
$|\nabla_{X,Y}|$ is even.
In~\cite{erdos_mixing_2022}, the path $\varUpsilon(X,Y,s)$ in the switch Markov
graph is defined exactly when the degree sequences of $X$ and $Y$ are identical
and $s\in S_{X,Y}$. We use the definition of $\varUpsilon(X,Y,s)$
from~\cite{erdos_mixing_2022} only when $s\in S_{X,Y}$ and $L(\nabla_{X,Y},s)$
is a cycle, so when $p_s=1$.

\medskip

First of all, let us recall that $\varUpsilon(X,Y,s)$
in~\cite{erdos_mixing_2022} describes a sequence of graphs such that each two
consecutive graphs can be obtained from each other by a switch.
In~\cite[Definition~4.2]{erdos_mixing_2022}, for any $s\in S_{X,Y}$, the path
$\varUpsilon(X,Y,s)$ is composed by concatenating a number of \textsc{Sweep}
sequences:
\begin{equation*}
	\varUpsilon(X,Y,s)=
	{\left({\left(\textsc{Sweep}(G^k_r,C^k_r)\right)}_{r=1}^{\mu_{k}+1} \right)}_{k=1}^{p_s},
\end{equation*}
where $C^k_r$ are circuits and $G^k_{r+1}=G^k_r\triangle C^k_r$, where
$G^k_0=X\triangle \uplus_{i=1}^{k-1} W^s_i$ and $G^k_{\mu_k+1}=X\triangle
\uplus_{i=1}^{k} W^s_i$, and $W^s_k=\uplus_{r=1}^{\mu_k+1} E(C^k_r)$ and
$\nabla_{X,Y}=\uplus_{k=1}^{p_s} W^s_k$. It is easy to check
in~\cite[Algorithm~2.1]{erdos_mixing_2022}, that $\textsc{Sweep}(G^k_r,C^k_r)$
is a sequence of switches such that each switch is incident with all four
vertices on $V(C^k_r)$.

\medskip

When $(X,Y,s)\in\mathfrak{C}_\mathrm{id}\cap \mathfrak{D}_M$,
by definition $L(\nabla_{X,Y},s)$ is connected and $p_s=1$, thus we may define
\begin{equation}\label{eq:defUpsilon}
	\varUpsilon_{M}(X,Y,s)={\left(\textsc{Sweep}(X\triangle
    \uplus_{i=1}^{r-1}C_i,C_r)\right)}_{r=1}^{\mu+1},
\end{equation}
where $s$ decomposes $\nabla_{X,Y}$ into (primitive) circuits ${(C_r)}_{r=1}^\mu$ such
that $\nabla=\uplus_{r=1}^\mu E(C_r)$, see~\cite[Lemma~5.13]{erdos_mixing_2022}.
The circuit $C_r$ defines a cyclical order on its vertices, but $\textsc{Sweep}$
takes a linear order, so we still need to select the \textbf{cornerstone}, where
the linear order starts to enumerate the vertices in the given cyclical order.
The choice of the cornerstone (\cite[eq.~(5.11)]{erdos_mixing_2022}) only
plays a role in proving that $\widehat M(X,Y,Z)$ is close to the adjacency
matrix of an appropriate graph in $\ell_1$-norm. From the rest of
\cite{erdos_mixing_2022}'s point of view, the cornerstone is arbitrarily chosen.

\medskip

In this adaptation of the proof in~\cite{erdos_mixing_2022}, the index $M$ of
$\varUpsilon_M(X,Y,s)$ matters only in the choice of the cornerstones. The
current proof is slightly more general than that of~\cite{erdos_mixing_2022},
because we not only consider $M=X+Y$, but also any other $M$ such that
$(X,Y,s)\in\mathfrak{D}_M$ (recall \cref{eq:DM}). In the path
$\varUpsilon_{M}(X,Y,s)$ incorporating $\textsc{Sweep}(G_r,C_r)$ (see
\cref{eq:defUpsilon}) choose the
cornerstone $v_r$ of the $\textsc{Sweep}(G_r,C_r)$ as follows:
\begin{align}\label{eq:cornerstone}
	\begin{split}
	&\text{Let }v_r\in V(C_r)\text{\ be the vertex which minimizes the row-sum
    in}\\
    &\qquad \left(M-A_{X\triangle \uplus_{i=1}^{r-1}C_i}\right)\Big|_{V(C_r)\times V(C_r)}\\
    &\text{and $v_r$ is lexicographically minimal with respect to this condition.}\\
	\end{split}
\end{align}

\medskip

Because $X,Y\in \mathcal{G}(d)$ for some $d$, Lemma~2.6
of~\cite{erdos_mixing_2022} applies, which claims that
$\textsc{Sweep}(X\triangle \uplus_{i=1}^{r-1}C_i,C_r)$ is a sequence of at most
$\frac12|E(C_r)|-1$ switches that connect $X\triangle \uplus_{i=1}^{r-1}C_i$ to
$X\triangle \uplus_{i=1}^{r}C_i$. Thus the total length of the switch sequence
$\varUpsilon_M(X,Y,s)$ is at most $\frac12|\nabla_{X,Y}|-1$. For any $Z\in
\varUpsilon_M(X,Y,s)$, the degree sequences of $X$, $Y$ and $Z$ are identical,
because switches preserve the degree sequence. Note that for any $j\neq r$, the sequence
$\textsc{Sweep}(X\uplus_{i=1}^{j-1}C_i,C_j)$ does not depend on the
cornerstone $v_r$.

\medskip

For any $Z\in \varUpsilon_M(X,Y,s)$, the matrix $M-A_Z$ belongs to
${\{-1,0,1,2\}}^{n\times n}$. Recall \cref{eq:DM}. If $(M-A_Z)_{vw}=2$, then
$vw$ is an edge in both $X$ and $Y$, but $vw$ is not present in $Z$ as an edge.
If, however, $(M-A_Z)_{vw}=-1$, then $vw\notin E(X),E(Y)$ and $vw\in E(Z)$. With
formulae,
\begin{align}
    \{ vw\ |\ (M-A_Z)_{vw}=+2\}&\subseteq E(X)\setminus E(Z)\setminus
    \nabla_{X,Y}\label{eq:p2}\\
    \{ vw\ |\ (M-A_Z)_{vw}=-1\}&\subseteq E(Z)\setminus E(X)\setminus
    \nabla_{X,Y}\label{eq:m1}
\end{align}
respectively. In~\cite[Lemma~2.7]{erdos_mixing_2022}, the set $R=R_Z$ is defined, and it has
cardinality at most 4. By its definition, the set of edges in $R$ is a superset
of $\left(E(X)\triangle E(Z)\right)\setminus \nabla_{X,Y}$, which is the union of the right hands sides
of \cref{eq:p2,eq:m1}. In short, every $+2$ and $-1$ entry of $M-A_Z$ is in a
position which is associated to an edge in $R$.

\medskip

We will show that $M-A_Z$ is $7$-tight. Lemma~8.2 in~\cite{erdos_mixing_2022}
is the analogue of this tightness statement, and its proof can be repeated for this
case with little to no modification. Suppose first that
every edge in $R$ is incident on $v_r$ from~\cref{eq:cornerstone}:
then~\cite[Lemma~7.1]{erdos_mixing_2022} claims that the entries in
$A_X+A_Y-A_Z$ associated to edges in $R$ consist of at most two pairs of symmetric $+2$
entries, and at most one pair of symmetric $-1$ entries. By \cref{eq:p2,eq:m1},
$M-A_Z$ also contains at most two pairs of symmetric $+2$ entries, and at most one
pair of symmetric $-1$ entries.

\medskip

Recall that $Z$ is obtained from $X\triangle
\uplus_{i=1}^{r-1}C_i$ through a series of switches that
only touch edges whose vertices are contained in $V(C_r)$. Thus the row-
and columns-sums of the submatrices
\begin{equation*}
(M-A_Z)|_{V(C_r)\times V(C_r)}\quad\text{and}\quad\left(M-A_{X\triangle
\uplus_{i=1}^{r-1}C_i}\right)\Big|_{V(C_r)\times V(C_r)}
\end{equation*}
are identical. Let $v$ and $w$ be two distinct vertices in $V(C_r)$.
\begin{itemize}
	\item If $M_{vw}=2$, then by \cref{eq:p2},  $vw\in E(X)$ and $vw\notin
		\nabla_{X,Y}$, thus
		\begin{equation*}
			{\left(M-A_{X\triangle \uplus_{i=1}^{r-1}C_i}\right)}_{vw}={(M-A_X)}_{vw}=2-1=1.
		\end{equation*}
	\item If $M_{vw}=0$, then by \cref{eq:m1}, $vw\notin E(X)$ and $vw\notin
		\nabla_{X,Y}$, thus
		\begin{equation*}
			{\left(M-A_{X\triangle \uplus_{i=1}^{r-1}C_i}\right)}_{vw}={(M-A_X)}_{vw}=0-0=0.
		\end{equation*}
	\item If $M_{vw}=1$ and $vw\in E\left(X\triangle \uplus_{i=1}^{r-1}C_i\right)$,
		then ${\left(M-A_{X\triangle \uplus_{i=1}^{r-1}C_i}\right)}_{vw}=0$.
	\item If $M_{vw}=1$ and $vw\notin E\left(X\triangle \uplus_{i=1}^{r-1}C_i\right)$,
		then ${\left(M-A_{X\triangle \uplus_{i=1}^{r-1}C_i}\right)}_{vw}=1$.
\end{itemize}
Every entry of $\left(M-A_{X\triangle
\uplus_{i=1}^{r-1}C_i}\right)\big|_{V(C_r)\times V(C_r)}$ is either a 0 or a 1,
and the diagonal is identically zero. Since $C_r$ is alternating in $X\triangle
\uplus_{i=1}^{r-1}C_i$, there is at least one $0$ entry and one $1$ entry in
every row and every column. Therefore the row- and column-sums of
$(M-A_Z)|_{V(C_r)\times V(C_r)}$ are at least $1$ and at most $|V(C_r)|-2$.
Moreover, \cref{eq:cornerstone} ensures that the row-sum corresponding to $v_r$
in ${(M-A_Z)}_{V(C_r)\times V(C_r)}$ is minimal. By \Cref{lemma:removeplus2s},
$M-A_Z$ is $5$-tight.

\medskip

We will again use~\cite[Lemma~2.7]{erdos_mixing_2022} to understand the more
detailed structure of $R_Z$. If there is an edge in $R_Z$ which is not incident
on $v_r$, then $R$ falls under case~(e) of~\cite[Lemma~2.7]{erdos_mixing_2022}.
Let $Z\triangle F$ be the next graph in the \textsc{Sweep} sequence, where $F$
is a $C_4$. By~\cite[Lemma~2.7(d)]{erdos_mixing_2022}, every edge in the set
$R_{Z\triangle F}$ is incident on $v_r$. As
previously,~\Cref{lemma:removeplus2s} implies that $M-A_{Z\triangle F}$
is $5$-tight, and thus $M-A_Z$ is $7$-tight.

\medskip

Next, we will cite 3 lemmas from~\cite{erdos_mixing_2022}. The first of these
lemmas refers to the graph $Z'=Z\triangle R$, which is defined
in~\cite[eq.~(13)]{erdos_mixing_2022}. Note that the graph $Z'$ is just a slight
perturbation of $Z$.
\begin{lemma}[adapted from Lemma~5.15 in~\cite{erdos_mixing_2022}]\label{lemma:WkSplit}
    For any $Z\in\varUpsilon(X,Y,s)$ for $s\in S_{X,Y}$, there exists
    $\pi_{Z'}\in \Pi(\nabla)$ which defines a closed Eulerian trail on
    $\nabla_{X,Y}$
    which is alternating in $Z'$ with at most 4 exceptions.
\end{lemma}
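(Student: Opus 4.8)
The plan is to adapt the argument of~\cite[Lemma~5.15]{erdos_mixing_2022}, keeping track of the (modest) generalization caused by the wider choice of auxiliary matrix $M$. Recall the structure set up above: $s$ is an $(X,Y)$-alternating closed Eulerian trail on $\nabla=\nabla_{X,Y}$, the decomposition of~\cite[Lemma~5.13]{erdos_mixing_2022} writes $\nabla=\uplus_{r=1}^{\mu}E(C_r)$ into primitive circuits (simple, $X$-alternating cycles) with $\varUpsilon(X,Y,s)$ the concatenation of the sweeps $\textsc{Sweep}(X\triangle\uplus_{i<r}C_i,C_r)$, and $Z\in\varUpsilon(X,Y,s)$ is an intermediate graph of a single $\textsc{Sweep}(G_\rho,C_\rho)$, obtained from $G_\rho=X\triangle\uplus_{i<\rho}C_i$ by switches confined to $V(C_\rho)$ (see~\cref{eq:defUpsilon}). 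Hence $Z$ agrees with $Y$ along each circuit $C_i$ with $i<\rho$ (flipping $X$ along an $(X,Y)$-alternating circuit turns it into $Y$ there), agrees with $X$ along each $C_i$ with $i>\rho$, and along $C_\rho$ it consists of a bounded number of arcs of $X$- and $Y$-configuration glued together by a bounded number of chords lying outside $\nabla$. By~\cite[Lemma~2.7]{erdos_mixing_2022} the set $R=R_Z$ has at most four edges and contains $(E(X)\triangle E(Z))\setminus\nabla$, so $Z'=Z\triangle R$ keeps this ``block'' structure on $\nabla$ except for at most four edges.

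I would \emph{not} take $\pi_{Z'}=s$: since $s$ pairs a circuit-edge with an edge of another circuit at each splice vertex, $s$ can fail to alternate in $Z'$ at \emph{every} vertex where an $X$-configuration circuit meets a $Y$-configuration circuit, and there may be arbitrarily many such vertices. Instead I would define $\pi_{Z'}$ circuit by circuit. On each $C_i$ with $i\neq\rho$, let $\pi_{Z'}$ pair the two $C_i$-edges at every vertex of $C_i$; since $Z'$ is in $X$- or $Y$-configuration on the alternating cycle $C_i$, each such pair contains exactly one edge of $Z'$ and one non-edge of $Z'$, hence is not an exception. On $C_\rho$, let $\pi_{Z'}$ traverse $C_\rho$ respecting the arcs of its partial configuration; since there are only $O(1)$ arcs, only $O(1)$ pairs fail to alternate. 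At this point $L(\nabla,\pi_{Z'})$ is a disjoint union of $\mu$ cycles. To connect it, fix a spanning tree of the ``intersection graph'' of the circuits (vertices $C_1,\dots,C_\mu$; an edge whenever two circuits share a vertex, which is connected because $s$ is a single closed trail on $\nabla$), and for each tree-edge re-pair $\pi_{Z'}$ at a common vertex $v$ of the two circuits so that the two incident components become one closed trail: at $v$ each component offers one edge and one non-edge of $Z'$, so exactly one of the two merging re-pairings keeps each new pair mixed, and the merge creates no new exception. The outcome is a single closed Eulerian trail on $\nabla$ — so $L(\nabla,\pi_{Z'})$ is connected — with only $O(1)$ exceptions, all located at the arc-boundaries inside $C_\rho$ and at the at most four edges of $R$.

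The main obstacle is the final bound of exactly $4$ on the number of exceptions: this is the bookkeeping already done in~\cite[proof of Lemma~5.15]{erdos_mixing_2022}, which I would transcribe, and it rests on the explicit descriptions of $\textsc{Sweep}$ in~\cite[Lemma~2.6]{erdos_mixing_2022} and of $R_Z$ in~\cite[Lemma~2.7]{erdos_mixing_2022} to confirm that the partial state of $C_\rho$ in $Z'$ really decomposes into just a bounded number of $X$- and $Y$-arcs. The one genuinely new point here is that we must check this count is not affected by replacing the cornerstone rule by~\cref{eq:cornerstone}, which now depends on the more general matrix $M$; but \cref{eq:cornerstone} only selects \emph{which} vertex of $C_\rho$ the linear order of the sweep starts from, so it changes the endpoints of the $X$/$Y$-arcs but not their number. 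Everything else — the per-circuit pairing and the merge-along-a-spanning-tree step — is elementary.
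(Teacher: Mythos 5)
The paper supplies no proof of this lemma at all: it is imported verbatim from \cite[Lemma~5.15]{erdos_mixing_2022}, and the only ``adaptation'' the paper needs is the observation (made in the surrounding proof of \Cref{lemma:Cthin}) that the cornerstone choice in \cref{eq:cornerstone} does not influence it. So there is no in-text argument to compare against, and what you have written is a reconstruction of the cited proof from first principles. That reconstruction is sensible and identifies the right obstacle: since $Z'$ is in $Y$-configuration on the already-swept circuits and in $X$-configuration on the rest, $s$ itself is non-alternating at every splice vertex where $s$ crosses between circuits of opposite configuration, and this count is not $O(1)$. Your alternative---pair within each circuit (exception-free on every alternating cycle that $Z'$ sees in pure $X$- or $Y$-configuration), handle the $O(1)$ arc boundaries inside $C_\rho$, then reconnect along a spanning tree of the circuit-intersection graph using the unique exception-free cross-repairing at each chosen splice vertex---is a standard and workable device that produces a single closed Eulerian trail on $\nabla_{X,Y}$ with a constant number of non-alternations, and you rightly defer pinning the constant to exactly 4 to the earlier paper's bookkeeping. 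The one place to be a bit more careful than your sketch suggests: you assume $R_Z$ (hence the chords) lie entirely outside $\nabla_{X,Y}$, while the paper's own downstream use of the lemma (``alternating in $Z$ with at most $4+2|R_Z|$ exceptions'') is only meaningful if $R_Z$ can touch $\nabla$; since $Z'=Z$ on $\nabla\setminus R_Z$ but may differ on $\nabla\cap R_Z$, a per-circuit pairing on a circuit grazed by $R_Z$ may itself incur up to two exceptions per such edge. This does not invalidate your construction (the constant is still $O(1)$), but it is a detail the earlier lemma's $\le 4$ tally must already account for, and it is worth re-examining when you transcribe the exact bound. Also, your closing remark about the cornerstone is correct but largely moot---the paper explicitly says the cornerstone choice plays no role in producing $\pi_{Z'}$, only in the $\ell_1$-tightness of $\widehat M(X,Y,Z)$.
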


\begin{lemma}[Lemma~5.21 in~\cite{erdos_mixing_2022}]\label{lemma:Bsize}
    For a fixed number $n$ of vertices of $X$ and $Y$, the cardinality of the set of possible
    tuples $B(X,Y,Z,s)$ is $\mathcal{O}(n^{8})$, where $s\in S_{X,Y}$
    and $Z\in\varUpsilon(X,Y,s)$ are arbitrary.
\end{lemma}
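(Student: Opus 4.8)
Since this lemma is quoted verbatim from~\cite[Lemma~5.21]{erdos_mixing_2022}, one may simply invoke that reference; for completeness I sketch a direct argument. The plan is to recall the explicit form of the tuple $B(X,Y,Z,s)$ built in~\cite{erdos_mixing_2022} and to bound, coordinate by coordinate, the number of values each component can take. The governing principle is that $B$ is allowed to carry only \emph{bounded-size, local} data: all global information about the primitive-circuit decomposition of $\nabla_{X,Y}$ and about the position of $Z$ along $\varUpsilon(X,Y,s)$ must be reconstructible from $(Z,\nabla_{X,Y},\pi_{Z'})$ alone — which is exactly the role of \Cref{lemma:WkSplit} and \Cref{def:precursor}\ref{prop:connected} — so none of that is stored in $B$.

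First, I would single out the dominant component of $B$: the perturbation set $R=R_Z$, that is, the set of edges on which $Z$ disagrees with $X$ off $\nabla_{X,Y}$, equivalently the edges carrying the $+2$ and $-1$ entries of $\widehat M(X,Y,Z)$. By~\cite[Lemma~2.7]{erdos_mixing_2022} we have $|R|\le 4$, and $B$ records these edges together with a $\{+2,-1\}$-flag for each; this is precisely what lets $\varPsi$ recover the off-$\nabla_{X,Y}$ part of $X$ (hence of $Y$) from $Z$. Each such edge ranges over $\binom{[n]}{2}$, contributing a factor $\mathcal{O}(n^2)$, and each flag over a two-element set; with $|R|\le 4$ this block has $\mathcal{O}(n^8)$ possibilities. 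The remaining coordinates of $B$ record only which of the finitely many structural cases of~\cite[Lemma~2.7]{erdos_mixing_2022} occurs (e.g.\ whether an extra $C_4$-step is needed before the tightness argument closes) and, where relevant, which of the $\le 4$ edges of $R$ is distinguished; these range over sets of size bounded independently of $n$, contributing only an $\mathcal{O}(1)$ factor. Multiplying, the number of distinct tuples $B(X,Y,Z,s)$ is $\mathcal{O}(n^8)$ — comfortably inside the $\mathcal{O}(n^{c})$ budget of \Cref{def:precursor}\ref{prop:Bpoly} afforded by the parameter $c=12$ of \Cref{lemma:Cthin}.

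The step I expect to be the real work is checking that nothing else in $B$ secretly ranges over more than four edges' worth of values — in particular that the sweep index $r$, the position of $Z$ within $\textsc{Sweep}(G_r,C_r)$, the at most four alternation-exceptions of $\pi_{Z'}$, and the chosen cornerstones are all determined by $(Z,\nabla_{X,Y},\pi_{Z'})$ together with the bounded flags, and hence need not be recorded at all. This leans on the facts that $\pi_{Z'}$ is alternating in $Z'=Z\triangle R$ with at most $4$ exceptions and that $L(\nabla_{X,Y},\pi_{Z'})$ is connected: together these pin down the Eulerian trail $s$ and its primitive-circuit decomposition up to a bounded correction, so $r$ and the local sweep position can be read off from how $Z$ sits relative to that decomposition. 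Once this recoverability is verified, the coordinate-wise count above goes through and delivers the stated $\mathcal{O}(n^8)$ bound.
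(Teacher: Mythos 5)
The paper does not reprove this lemma; it imports it verbatim from the cited reference (it appears as one of the three black-box lemmas, together with \Cref{lemma:WkSplit} and \Cref{lemma:sigma}, quoted in the proof of \Cref{lemma:Cthin}), so there is no ``paper's own proof'' against which to check your reconstruction. That said, your sketch is consistent with everything the present paper does reveal about the reference: the paper states that $R=R_Z$ from Lemma~2.7 of the reference has $|R|\le 4$, and the $\mathcal{O}(n^8)$ budget indeed matches four edges from $\binom{[n]}{2}$, each contributing $\mathcal{O}(n^2)$. So your identification of $R$ as the dominant coordinate of $B$ is almost certainly correct.

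What you rightly flag as ``the real work'' --- that the sweep index $r$, the position inside a $\textsc{Sweep}$, the cornerstones, and the $\le 4$ alternation exceptions of $\pi_{Z'}$ are all derivable from $(Z,\nabla_{X,Y},\pi_{Z'})$ plus $\mathcal{O}(1)$ flags, and hence need not be stored in $B$ --- is precisely what cannot be checked from this paper alone; it is exactly the content of the reference's Section~5. Your heuristic argument for why they must be reconstructible (any of these being stored naively would blow the budget, e.g.\ a single alternation exception already costs order $n^4$ to write down) is sound, but it is an abductive inference, not a verification. In short: plausible reconstruction of a cited result, dominant term correctly accounted for, but the key recoverability claim remains an article of faith without the reference in hand.
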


\begin{lemma}[Lemma~5.22 in~\cite{erdos_mixing_2022}]\label{lemma:sigma}
	The quadruplet composed of the graphs $Z$, $\nabla$, $\pi_{Z'}$, and
    $B(X,Y,Z,s)$ uniquely determines the triplet $(X,Y,s)$.
\end{lemma}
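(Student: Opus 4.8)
The plan is to prove the statement by exhibiting a deterministic decoding procedure which, on input $(Z,\nabla,\pi_{Z'},B(X,Y,Z,s))$ with $\nabla=\nabla_{X,Y}$, reconstructs the triple $(X,Y,s)$; since the procedure never consults the matrix $M$ and returns the triple unambiguously, it establishes that the assignment $\varPsi$ of \Cref{def:precursor}\ref{prop:Psi} is a well-defined function on the domain at hand. This is precisely the statement of~\cite[Lemma~5.22]{erdos_mixing_2022}; the only thing to add is that here $M$ ranges over all matrices with $(X,Y,s)\in\mathfrak{D}_M$, but $M$ enters the construction of $\varUpsilon_M(X,Y,s)$ only through the choice of the cornerstones in~\cref{eq:cornerstone}, and (as already noted) a cornerstone influences only the $\ell_1$-tightness bookkeeping and not the combinatorial skeleton of the $\textsc{Sweep}$ sequences, so this enlargement is immaterial to the decoder.

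The first step is to recover the part of $X$ (equivalently $Y$) lying outside $\nabla$, together with the auxiliary graph $Z'$. On $\binom{[n]}{2}\setminus\nabla$ the graphs $X$ and $Y$ coincide, and by \Cref{def:precursor}\ref{prop:cexceptions} (i.e.~\cite[Lemma~2.7]{erdos_mixing_2022}) they coincide with $Z$ there outside a set $R=R_Z$ of at most four edges; this set, and more generally the bounded perturbation relating $Z$ to $X$ near the active circuit, is exactly the data that $B(X,Y,Z,s)$ is designed to carry. Flipping the edges of $R$ back in $Z$ then yields $E(X)\setminus\nabla=E(Y)\setminus\nabla$, and flipping all of $R$ yields $Z'=Z\triangle R$.

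The second step is to recover $s$ and then the partition of $\nabla$ into its $X$-edges and its $Y$-edges. As $Z$ traverses $\varUpsilon_M(X,Y,s)$, the associated pairing function $\pi_M(X,Y,s,Z)=\pi_{Z'}$ equals $s$ at both ends (\Cref{def:precursor}\ref{prop:pistartend}) and is modified only locally around the $C_4$ currently being swept, so $\pi_{Z'}$ differs from $s$ by a bounded number of re-pairings recorded in $B$; undoing them recovers $s$, which (by \Cref{def:precursor}\ref{prop:connected} and \Cref{lemma:WkSplit}) still describes a connected closed Eulerian trail on $\nabla$. By~\cite[Lemma~5.13]{erdos_mixing_2022}, $s$ decomposes $\nabla$ canonically into primitive circuits $C_1,\dots,C_\mu$, and $Z$ lies in $\textsc{Sweep}(X\triangle\uplus_{i<r}C_i,C_r)$ for a unique index $r$; the tuple $B$ records $r$, the cornerstone $v_r$ (read off rather than recomputed, by the remark above), and the position of $Z$ within that sweep. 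Since $\textsc{Sweep}$ changes only edges among $V(C_r)$, once we undo the perturbation recorded in $B$ the graph $Z$ reveals the $X$-edges on $\bigcup_{i>r}E(C_i)$, the $Y$-edges on $\bigcup_{i<r}E(C_i)$, and --- through $r$, $v_r$ and the sweep position --- the $X$-edges on $E(C_r)$ as well (each $C_i$ being an even closed trail on which the $X$-edges and the $Y$-edges are complementary). This determines $E(X)\cap\nabla$, hence $X$, hence $Y=X\triangle\nabla$, and so the triple $(X,Y,s)$ is pinned down.

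I expect the genuine difficulty to be twofold. The first is the apparent circularity that the cornerstone rule~\cref{eq:cornerstone} refers to the unknowns $X$ and $Y$; I would sidestep it by \emph{storing} $v_r$ in $B$ rather than deriving it, which is legitimate precisely because the cornerstone is combinatorially inert, and this is also what makes the passage from $M=A_X+A_Y$ to arbitrary $M$ with $(X,Y,s)\in\mathfrak{D}_M$ cost nothing. The second, more substantial difficulty is verifying that all of the data the decoder reads off $B$ in the second step really fits into a tuple with only $\mathcal{O}(n^8)$ admissible values --- this is \Cref{lemma:Bsize} --- while leaving no residual ambiguity at any stage. This last verification is the core technical content; it is carried out in the proof of~\cite[Lemma~5.22]{erdos_mixing_2022} and, by the observation above, transfers unchanged to the present more general setting.
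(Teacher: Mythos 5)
The paper does not prove this lemma at all: it is imported verbatim as Lemma~5.22 of~\cite{erdos_mixing_2022}, so there is no in-paper argument to compare against. Your sketch is a faithful outline of the standard decoding argument used there, and your one substantive addition --- that enlarging the family of admissible matrices $M$ only affects the cornerstone choice, which can be recorded in $B$ at polynomial cost and is otherwise combinatorially inert --- is exactly the observation the paper itself makes in the proof of \Cref{lemma:Cthin}; like the paper, you ultimately defer the technical details to the cited proof, which is the intended reading.
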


We define $\pi_M(X,Y,s,Z)=\pi_{Z'}$. \Cref{lemma:WkSplit} implies that $\pi_{Z'}$
is alternating in $Z$ with at most $4+2|R_Z|\le 12$ exceptions. Let $B_M(X,Y,s,Z)$ be identical
with the parameter set $B(X,Y,Z,s)$ defined in~\cite{erdos_mixing_2022}.
\Cref{lemma:WkSplit,lemma:Bsize,lemma:sigma} ensure that every itemized requirement of
\Cref{def:precursor} holds, similarly to the situation
in~\cite{erdos_mixing_2022}.\\ \mbox{ }\ \mbox{ }\hfill $\qed_{\textbf{\Cref{lemma:Cthin}}}$

\medskip

Now we are at the point where \Cref{thm:switch} is reproved by the generalized
machinery: the Markov chain $\mathbb{G}(d)$ (using switches only) is rapidly mixing for any
$d$ from a $P$-stable set. By \Cref{lemma:Cthin,lemma:extension}, there exists a precursor on
$\mathfrak{R}_\mathrm{id}$ with parameter $3c$, and the theorem follows from
\Cref{rem:pstable} and \Cref{thm:precursor}.

\subsection{Stage 2: open trails}
Until now, the degree sequences of $X$ and $Y$ in
$(X,Y,s)\in\mathfrak{C}_\mathrm{thin}$ were identical, that is, $s$ was a closed
trail. In the second stage we deal
with the case when $\|\deg_X-\deg_Y\|_1=2$ (while $\|\deg_X-\deg_Y\|_\infty=1$).
The following lemma is actually a
framework for reducing the construction of the precursor on
$\mathfrak{C}_\mathrm{thin}$ to \Cref{lemma:Cthin}. Note that we do not aim to
optimize our estimate of the mixing time, we are merely interested in bounding
it polynomially. Surprisingly, to construct the precursor on
$\mathfrak{C}_\mathrm{thin}$, it is sufficient to consider only those open trails
$s$ that have odd length.

\medskip

Informally, the forthcoming \Cref{lemma:CI} states that if any open
$(X,Y)$-alternating trail of odd length can be cut up into a constant number of
segments that can be reassembled into at most two $(X,Y)$-alternating trails
that are either closed or can be closed by including $v_0v_\lambda$ or
$v_1v_{\lambda-1}$ to join the two ends (alternation is not required there),
then we can reduce the precursor construction on $\mathfrak{C}_\mathrm{thin}$ to
a precursor construction on $\mathfrak{C}_\mathrm{id}$.

\begin{lemma}\label{lemma:CI}
    Suppose there exists a precursor on $\mathfrak{C}_\mathrm{id}$ with
    parameter $c$, and let $c'$ be a fixed integer. Suppose, moreover, that for
    any $(X,Y,s)\in\mathfrak{C}_\mathrm{thin}$ where $s=v_0v_1\ldots
    v_\lambda\in \Pi(\nabla_{X,Y})$ is an open trail with $v_0<_\mathrm{lex}
    v_\lambda$ for some odd integer $\lambda$, there exist $\nabla_1,\nabla_2$
    and $s_1\in \Pi(\nabla_1),s_2\in\Pi(\nabla_2)$ (where $\nabla_2=\emptyset$
    is allowed) such that
    \begin{enumerate}[label={\rm (\arabic*)}]
        \item $\nabla_{X,Y}\setminus\{v_0v_\lambda\}\subseteq \nabla_1\cup
            \nabla_2\subseteq\left\{
             \begin{array}{ll} \nabla_{X,Y}\cup \{v_0v_\lambda\} & \text{if
                }v_1=v_{\lambda-1}\\ \nabla_{X,Y}\cup
                     \{v_0v_\lambda,v_1v_{\lambda-1}\}& \text{if
            }v_1\neq v_{\lambda-1}\end{array}\right.$,\label{assumption:nablaxy}
        \item $\nabla_{X,Y}\triangle \nabla_1\triangle \nabla_2\subseteq
            \{v_0v_\lambda\}$,\label{assumption:symmdiff}
        \item if $v_1v_{\lambda-1}\in (\nabla_1\cup\nabla_2)\setminus \nabla_{X,Y}$, then $v_0v_\lambda\in
            \nabla_{X,Y}$ and $s_1$ or $s_2$ is equal to
            $v_0v_1v_{\lambda-1}v_\lambda v_0$.\label{assumption:v1vl-1}
    \end{enumerate}
    Moreover, for both $i=1,2$:
    \begin{enumerate}[label={\rm (\arabic*)}]
        \setcounter{enumi}{3}
        \item the line graph $L(\nabla_i,s_i)$ is an even cycle (or an empty
            graph),\label{assumption:linegraph}
        \item $s_i-v_0v_\lambda-v_1v_{\lambda-1}$ is $(X,Y)$-alternating,
        \item $s_i-v_0v_\lambda$ is $(X,Y)$-alternating with 0 or 2 exceptions,
        \item $s_i-v_1v_{\lambda-1}$ is $(X,Y)$-alternating with 0 or 2 exceptions,
        \item the number of components of $L(\nabla_i,s_i)\cap L(\nabla,s)$ is
            at most $c'$.
    \end{enumerate}
    Then there exists a precursor on $\mathfrak{C}_\mathrm{thin}$ with parameter
    $3c+60c'+300$.
\end{lemma}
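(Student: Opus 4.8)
\medskip

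The plan is to reduce the construction of a precursor on $\mathfrak{C}_\mathrm{thin}$ to the already-available precursor on $\mathfrak{C}_\mathrm{id}$ (\Cref{lemma:Cthin}), applied twice — once for $(\nabla_1,s_1)$ and once for $(\nabla_2,s_2)$. The triples in $\mathfrak{C}_\mathrm{thin}$ that are not already in $\mathfrak{C}_\mathrm{id}$ are exactly those where $s=v_0v_1\ldots v_\lambda$ is an open $(X,Y)$-alternating trail. First I would dispose of the easy case $\lambda$ even: an open alternating trail of even length has both end-segments in the same side ($X$-edge or $Y$-edge), so $\deg_X$ and $\deg_Y$ cannot differ at both $v_0$ and $v_\lambda$ in the way forced by $s\in S_{X,Y}$ — hence $\lambda$ is odd, as the statement anticipates. (If $v_0>_\mathrm{lex}v_\lambda$ we relabel, using that the construction is symmetric in the two ends.) For the odd case, I would invoke the hypotheses to obtain $\nabla_1,\nabla_2,s_1,s_2$. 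The key auxiliary graphs are $X_1=X$, $Y_1=X\triangle\nabla_1$, and then $X_2=Y_1$, $Y_2=Y_1\triangle\nabla_2=Y$ (here assumption~\ref{assumption:symmdiff} is what guarantees $Y_1\triangle\nabla_2=Y$ up to the single edge $v_0v_\lambda$, which must be handled carefully — see below). By assumption~\ref{assumption:linegraph} each $L(\nabla_i,s_i)$ is an even cycle, so provided $\deg_{X_i}=\deg_{Y_i}$ we would have $(X_i,Y_i,s_i)\in\mathfrak{C}_\mathrm{id}$ and could feed it into the Stage-1 precursor to obtain a switch path $\varUpsilon_{M}(X_i,Y_i,s_i)$.

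\medskip

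The path $\varUpsilon_M(X,Y,s)$ on $\mathfrak{C}_\mathrm{thin}$ will then be assembled as: a constant-length prefix of hinge-flips and edge-toggles that turns $X$ into $X_1$ (toggling $v_0v_\lambda$ in or out, and if $v_1\neq v_{\lambda-1}$ possibly adjusting $v_1v_{\lambda-1}$ via a hinge-flip — this is precisely the role of assumptions~\ref{assumption:nablaxy} and~\ref{assumption:v1vl-1}); then the switch path $\varUpsilon_M(X_1,Y_1,s_1)$; then a short corrective segment turning $Y_1$ into $X_2$; then $\varUpsilon_M(X_2,Y_2,s_2)$; then a short suffix turning $Y_2$ into $Y$. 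Each corrective segment changes the degree sequence by a bounded amount and stays inside $\mathbb{G}(\ell_{X,Y},u_{X,Y})$ because at every intermediate vertex the degree stays within $\{\deg_X(v),\deg_Y(v)\}$ (this uses thinness and the alternation-with-few-exceptions hypotheses (5)–(7), which control how $\nabla_i$ deviates from $\nabla_{X,Y}$). For the matrices $\pi_M(X,Y,s,Z)$ and parameter tuples $B_M(X,Y,s,Z)$: when $Z$ lies in the $i$-th switch subpath I would set $\pi_M(X,Y,s,Z)$ to be $\pi_M(X_i,Y_i,s_i,Z)$ \emph{re-glued} to the untouched part $s_{3-i}$ (much as in \cref{eq:piextend}), recording in $B_M$ which subpath $Z$ belongs to, the bounded information distinguishing the handful of prefix/suffix/corrective graphs, and the constant-size data needed to recover $\nabla_1,\nabla_2,s_1,s_2$ from $\nabla_{X,Y},\pi,s$ (this is where assumption~(8), bounding the number of components of $L(\nabla_i,s_i)\cap L(\nabla,s)$ by $c'$, is essential: it certifies that $B_M$ has range of size only $\mathcal{O}(n^{c'})$ — equivalently the gluing data is short). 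Verifying \Cref{def:precursor}\ref{prop:length}, \ref{prop:cexceptions}, \ref{prop:ctight}, \ref{prop:picexceptions}, \ref{prop:pistartend}, \ref{prop:connected}, \ref{prop:Bpoly} is then a matter of adding up constants: the length is at most $2\cdot c|\nabla|+\mathcal{O}(1)$; each $\varUpsilon_M(X_i,Y_i,s_i)$ is $c$-tight with respect to $M-A_Z$ and the corrective/prefix/suffix segments add $\mathcal{O}(1)$ to $\|M-A_Z\|_1$; $\pi_M$ inherits alternation with at most $c$ exceptions from the Stage-1 piece plus $\mathcal{O}(c')$ from the re-gluing and the exceptions already present in assumptions~(5)–(7); and connectedness is preserved since an even cycle is connected and the single re-gluing edge cannot disconnect things. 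Tallying gives the parameter $3c+60c'+300$ claimed.

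\medskip

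The main obstacle I anticipate is \textbf{well-definedness of $\varPsi$ on the enlarged domain}, i.e.\ \Cref{def:precursor}\ref{prop:Psi}: from $(Z,\nabla_{X,Y},\pi_M(X,Y,s,Z),B_M(X,Y,s,Z))$ we must recover $(X,Y,s)$ uniquely. The subtlety is that $Z$ itself does not know $\nabla_i$ or the split point; we only know $\nabla_{X,Y}$. Here the argument must proceed as follows: the tuple $B_M$ tells us \emph{which} subpath $Z$ is in and the $\mathcal{O}(1)$ gluing data, so from $\pi_M(X,Y,s,Z)$ we can reconstruct the $\mathfrak{C}_\mathrm{id}$-pairing $\pi_M(X_i,Y_i,s_i,Z)$ on $\nabla_i$ (its edge set is recovered from $\nabla_{X,Y}$ plus the at-most-two special edges $v_0v_\lambda$, $v_1v_{\lambda-1}$, whose identity is in $B_M$). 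Then the Stage-1 $\varPsi$ returns $(X_i,Y_i,s_i)$; from $X_i$ and the recorded prefix we get $X$, from $\nabla_{X,Y}$ we get $Y=X\triangle\nabla_{X,Y}$, and $s$ is reconstructed by concatenating $s_1,s_2$ and undoing the re-gluing — the delicate point being that the edges $v_0v_\lambda$ and $v_1v_{\lambda-1}$ appear in the $s_i$'s but must be stripped out (exactly what $s_i-v_0v_\lambda-v_1v_{\lambda-1}$ in assumption~(5) encodes), and the re-joined trail $s$ must be shown to be the \emph{unique} $(X,Y)$-alternating trail consistent with all this data — that uniqueness follows because $L(\nabla_{X,Y},s)$ is connected (we are in $\mathfrak{C}_\mathrm{thin}$), so $s$ is an Eulerian trail on $\nabla_{X,Y}$ and is pinned down by its behaviour at the two end-vertices plus the component structure of the $s_i$. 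Making this reconstruction airtight — in particular checking that no two distinct triples in $\mathfrak{C}_\mathrm{thin}$ can give rise to the same quadruple after the gluing — is the technical heart of the lemma.
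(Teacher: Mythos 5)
Your high-level plan for the odd-length case matches the paper's strategy: split via $\nabla_1,\nabla_2,s_1,s_2$, run the Stage-1 precursor twice, glue the pairing functions together, and use assumption~(8) to show the gluing data has polynomial range; the emphasis you place on well-definedness of $\varPsi$ is also where the paper spends its care. But there is a genuine gap at the very first step.

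You dispose of the even-$\lambda$ case by claiming it cannot occur, arguing that ``an open alternating trail of even length has both end-segments in the same side.'' This has the parity backwards, and more importantly the conclusion is false. An $(X,Y)$-alternating open trail with $\lambda$ edges has its first and last edges in the same class exactly when $\lambda$ is \emph{odd} (then $\deg_Y=\deg_X\pm\mathds{1}_{\{v_0,v_\lambda\}}$), and in different classes when $\lambda$ is \emph{even} (then $\deg_Y=\deg_X\pm(\mathds{1}_{\{v_0\}}-\mathds{1}_{\{v_\lambda\}})$). Both are compatible with $\|\deg_X-\deg_Y\|_\infty\le 1$, so $\mathfrak{C}_\mathrm{thin}$ genuinely contains open trails of even length, and a precursor must be constructed on them too. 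The paper handles these separately: Case~B ($v_0=v_{\lambda-1}$) via a hinge-flip on $\{v_{\lambda-2}v_0,v_{\lambda-2}v_\lambda\}$, and Case~C ($v_0\neq v_{\lambda-1}$) by dropping the last edge $v_{\lambda-1}v_\lambda$ to reduce to the odd case, then replacing the edge-toggle on $v_0v_{\lambda-1}$ with a hinge-flip between $v_0v_{\lambda-1}$ and $v_{\lambda-1}v_\lambda$ so that intermediate degree sequences remain in $\{\deg_X,\deg_Y\}$. Without this, the precursor you propose is simply not defined on all of $\mathfrak{C}_\mathrm{thin}$.

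A secondary imprecision: in your seam construction you suggest possibly ``adjusting $v_1v_{\lambda-1}$ via a hinge-flip.'' The paper never performs a graph operation on $v_1v_{\lambda-1}$; that edge only affects the auxiliary matrix $M'$ in \cref{eq:Mprime}, while the sole non-switch operation between $X$ and $Y$ in the odd case is one edge-toggle on $v_0v_\lambda$. The auxiliary graphs $X_1,Y_1,X_2,Y_2$ are chosen (depending on whether $s_1$, resp.\ $s_2$, alternates in the current graph) so that exactly one of the three potential seams $X\triangle X_1$, $Y_1\triangle X_2$, $Y_2\triangle Y$ is nonempty and equals $\{v_0v_\lambda\}$; this is proved via a mod-4 parity argument on $\|\deg_X-\deg_Y\|_1$, which your sketch does not mention and which is needed to keep the path inside $\mathbb{G}(\ell_{X,Y},u_{X,Y})$.
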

We are aware that such a huge parameter is nowhere near a practical
bound. We made virtually zero effort to optimize the parameter.
\begin{proof}
    Let $(X,Y,s)\in \mathfrak{C}_\mathrm{thin}$ be such that $s=v_0v_1\ldots
    v_\lambda$ and $v_0<_\mathrm{lex} v_\lambda$. We will
    now consider the case when $\lambda$ is odd. As discussed earlier, the case
    of even $\lambda$ will be handled by a reduction to the odd case. For an odd $\lambda$, we must
    have either $\deg_Y=\deg_X+\mathds{1}_{\{v_0,v_\lambda\}}$ or
    $\deg_Y=\deg_X-\mathds{1}_{\{v_0,v_\lambda\}}$, because $s$ is
    $(X,Y)$-alternating and its length $\lambda$ is odd.

    \medskip

    Let $\nabla_i$ and $s_i\in \Pi(\nabla_i)$ for $i=1,2$ be the
    set of edges and pairing function assumed to exist in the statement of this
    lemma. Let $M$ be such that $(X,Y,s)\in\mathfrak{D}_M$, and we will first define
    $\varUpsilon_M(X,Y,s)$, then we will also define $\pi_M(X,Y,s,Z)$ and $B_M(X,Y,s,Z)$ for any
    $Z\in\varUpsilon_M(X,Y,s)$.

    \medskip

    Let us modify the auxiliary matrix $M$. Recall from
    \Cref{def:DM} that if $ab\in\nabla_{X,Y}$, then $M_{ab}=1$. By
    assumption~\ref{assumption:v1vl-1} of this lemma, if
    $M_{v_1v_{\lambda-1}}\neq 1$ and $v_1v_{\lambda-1}\in
    \nabla_i$, then $v_0v_\lambda\in \nabla_{X,Y}$ and $M_{v_0v_\lambda}=1$.
    Let us define
    \begin{equation}\label{eq:Mprime}
        M'=\left\{
        \begin{array}{ll}
            M+A_{(v_0v_\lambda)} & \text{if }M_{v_0v_\lambda}=0,\\
            M-A_{(v_0v_\lambda)} & \text{if }M_{v_0v_\lambda}=2,\\
            M+A_{(v_1v_{\lambda-1})}-A_{(v_0v_1)}-A_{(v_{\lambda-1}v_\lambda)} &
            \text{if }M_{v_0v_\lambda}=1\text{ and }M_{v_1v_{\lambda-1}}=0\text{
            and }v_1\neq v_{\lambda-1},\\
            M-A_{(v_1v_{\lambda-1})}+A_{(v_0v_1)}+A_{(v_{\lambda-1}v_\lambda)} &
            \text{if }M_{v_0v_\lambda}=1\text{ and }M_{v_1v_{\lambda-1}}=2\text{
            and }v_1\neq v_{\lambda-1},\\
            M &\text{if }M_{v_0v_\lambda}=1\text{ and
            }M_{v_1v_{\lambda-1}}=1\text{ or }v_1=v_{\lambda-1},
        \end{array}\right.
    \end{equation}
    so that $M'_{v_0v_\lambda}=1$. Also, $M'_{v_1v_{\lambda-1}}=1$ if
    $v_1v_{\lambda-1}\in \nabla_1\cup
    \nabla_2$. The row-sums of $M$
    and $M'$ are equal on every vertex except possibly on $v_0$ and $v_\lambda$.

    \medskip

    By
    assumption~\ref{assumption:linegraph}, $|\nabla_i|$ is even. From
    assumptions~\ref{assumption:nablaxy}~and~\ref{assumption:symmdiff} it follows
    that any $v_{j}v_{j+1}$ is contained in either $\nabla_1$ or $\nabla_2$, but
    not both, except if $\{v_{j},v_{j+1}\}=\{v_0,v_\lambda\}$ or
    $\{v_{j},v_{j+1}\}=\{v_1,v_{\lambda-1}\}$. Therefore $\nabla_1\cap
    \nabla_2\subseteq \{ v_0v_\lambda, v_1v_{\lambda-1}\}$. Let us start to
    extend the precursor. Without loss of generality, we may assume that
    $\nabla_1\neq \emptyset$.

    \paragraph{Case A1: $\nabla_2=\emptyset$.}
    Note that $|\nabla_{X,Y}|=\lambda$ is odd. Since $|\nabla_1|$ is even and
    $\nabla_1\setminus \nabla_{X,Y}\subseteq \{v_0v_\lambda\}$, we must have
    $v_0v_{\lambda}\notin \nabla_{X,Y}\Leftrightarrow v_0v_\lambda\in \nabla_1$.
    Also, $v_1v_{\lambda-1}\in \nabla_1\Leftrightarrow v_1v_{\lambda-1}\in
    \nabla_{X,Y}$. Let us slightly change $X$ and $Y$, so that the symmetric
    difference of the modified graphs $X_1,Y_1$ is exactly $\nabla_1$:
    \begin{align*}
    X_1&=\left\{
        \begin{array}{ll}
    		X\triangle v_0v_{\lambda}, &\text{if $s_1$ is not alternating in
            $X$};\\
    		X, &\text{if $s_1$ is alternating in $X$};\\
        \end{array}
    \right.\\
    Y_1&=\left\{
        \begin{array}{ll}
    		Y, &\text{if $s_1$ is not alternating in $X$};\\
    		Y\triangle v_0v_{\lambda}, &\text{if $s_1$ is alternating in $X$};\\
        \end{array}
    \right.
    \end{align*}
    Suppose $s_1-v_0v_\lambda$ is not alternating in $X$: then
    $v_1v_{\lambda-1}\in \nabla_{X,Y}$ and the two non-alternations of
    $s_1-v_0v_\lambda$ are located at $v_1$ and $v_{\lambda-1}$. But because
    $s_1-v_0v_\lambda-v_1v_\lambda$ is alternating in $X$, we have
    $|\deg_{E(X)\cap\nabla_{X,Y}}(v_1)-\deg_{E(Y)\cap\nabla_{X,Y}}(v_1)|=2$, so $s$ cannot
    possibly be $(X,Y)$-alternating, a contradiction. It follows that $s_1$ is
    alternating in $X_1$ (and thus $Y_1$): indeed, if $s_1$ is not alternating in $X$,
    then the two exceptions are the endpoints of $v_0v_\lambda$. Therefore
    $(X_1,Y_1,s_1)\in \mathfrak{C}_\mathrm{id}\cap \mathfrak{D}_{M'}$.

    \medskip

    We extend the precursor to $(X,Y,s)$ as follows.
    \begin{align}
        \varUpsilon_{M}(X,Y,s)&=\left\{
        \begin{array}{ll}
            X\xrightarrow{\text{toggle $v_0v_\lambda$}}
            \varUpsilon_{M'}(X_1,Y_1,s_1) &\text{if $s_1$ is not alternating in $X$}\\
    		\varUpsilon_{M'}(X_1,Y_1,s_1)\xrightarrow{\text{toggle $v_0v_\lambda$}}Y &\text{if $s_1$ is alternating in $X$},
        \end{array}
        \right.\\
        \pi_M(X,Y,s,Z)&=\left\{
        \begin{array}{ll}
            s & \text{if }Z=X,Y\\
            \pi_{M'}(X_1,Y_1,s_1,Z)|_{\nabla_{X,Y}} & \text{if }Z\in
            \varUpsilon_{M'}(X_1,Y_1,s_1)\setminus \{X,Y\}
        \end{array}\right.\\
        B_M(X,Y,s,Z)&=\left\{
        \begin{array}{l}
             (0,\mathrm{true}) \hspace{8em}\text{if }Z=X\\
             \\
             (0,\mathrm{false}) \hspace{8em}\text{if }Z=Y\\
             \\
             \big(1,\lambda,v_0v_\lambda,v_{0}v_{\lambda}\in
             E(X),\pi_M(X,Y,s,Z)\triangle\pi_{M'}(X_1,Y_1,s_1,Z),B_{M'}(X_1,Y_1,s_1,Z)\big) \\
                           \hspace{11em}\text{if }Z\in
                           \varUpsilon_{M'}(X_1,Y_1,s_1)\setminus \{X,Y\}
        \end{array}\right.
    \end{align}

    Let us verify that \Cref{def:precursor} holds for the extension. The defined
    path $\varUpsilon_M(X,Y,s)$ in the Markov-graph utilizes one edge-toggle,
    while the rest of the steps are switches. When the edge-toggle occurs, the
    degree sequence of the then current graph changes from $\deg_X$
    to $\deg_Y$, because the rest of the steps do not change
    the degree sequence.

    \medskip

    If $Z=X,Y$, then $M-A_Z$ is 0-tight, because
    $(X\triangle
    Z)\setminus \nabla=(Y\triangle Z)\setminus \nabla=\emptyset$. Suppose next,
    that $Z\in \varUpsilon_{M'}(X_1,Y_1,s_1)$. If $M'=M$, then $M-A_Z$ is $c$-tight
    by induction. If $M'=M\pm A_{v_0v_\lambda}$, then note that the row-sums of
    $M'-A_{X}$ are equal to the row-sums of $M-A_Y$, and the row-sums of $M'-A_Y$
    are equal to the row-sums of $M-A_X$. The degree sequence of $Z$ is
    equal to $\deg_X$ or $\deg_Y$, so $M'-A_Z$ is $c$-tight and therefore $M-A_Z$ is $c+1$-tight.

    \medskip

    The length of $\varUpsilon_M(X,Y,s)$ is at most
    $1+c|\nabla_1|=1+c|\nabla_{X,Y}|+c$, still linear.
    The symmetric difference of $X$ and $Z$ outside $\nabla_{X,Y}$ may also include $v_0v_\lambda$, so
    the upper bound in \Cref{def:precursor}\ref{prop:cexceptions} increases by at most one.

    \medskip

    The maximum number of exceptions to alternation of $\pi_M(X,Y,s,Z)$ in $Z$
    is no more than the number of exceptions to alternation of
    $\pi_{M'}(X_1,Y_1,s_1,Z)$ in $Z$, because $v_0v_\lambda\notin \nabla_{X,Y}$. Since
    $\pi_{M'}(X_1,Y_1,s_1,Z)$ is a closed trail, even if we restrict its domain
    from $\nabla_1$ to $\nabla_{X,Y}$, it remains connected. The range of $B_M(X,Y,s,Z)$
    increases by a polynomial multiplicative factor (of at most $4n^4$, but this
    will be dwarfed by the bound in the next case).

    \medskip

    Lastly, $\varPsi$ is still well-defined. Trivially, if
    $B_M(X,Y,s,Z)=(0,\mathrm{true})$ (alternatively $(0,\mathrm{false})$), then
    $X=Z$ ($Y=Z$) and $Y=Z\triangle
    \nabla_{X,Y}$ ($X=Z\triangle \nabla_{X,Y}$). If $B_M(X,Y,s,Z)=(1,\cdots)$,
    then we can recover
    $\pi_{M'}(X_1,Y_1,s_1,Z)$ from $\pi_M(X,Y,s,Z)$ using their symmetric
    difference, and subsequently, we can recover $X_1$ and $Y_1$ via $\varPsi$,
    because we have a precursor on $\mathfrak{C}_\mathrm{id}$. From these graphs we can easily recover both $X$ and $Y$, as $B_M(X,Y,s,Z)$ describes
    whether $v_{0}v_{\lambda}$ is in $E(X)$ or not (and the same containment
    relation holds for $E(Y)$ because $v_0v_\lambda\notin \nabla_{X,Y}$).

    \paragraph{Case A2: $\nabla_1\neq\emptyset$ and $\nabla_2\neq \emptyset$.}
    \emph{Task 1: constructing $\varUpsilon_M(X,Y,s)$.}
    Obviously, $|\nabla_i|\ge 4$ for $i=1,2$ ($s_i$ is an even length closed trail), and $|\nabla_1\cap \nabla_2|\le
    2$, so $\lambda\ge 5$. The reduction is similar to the previous case,
    however, the construction of the precursor on $(X,Y,s)$ will be reduced to
    not one, but two elements of $\mathfrak{C}_\mathrm{id}$. Recall, that any
    $v_{j}v_{j+1}\neq v_0v_\lambda,v_1v_{\lambda-1}$ appears in exactly one of
    $\nabla_1$ and $\nabla_2$. If $v_1v_{\lambda-1}\in
    \nabla_{X,Y}\cup\nabla_1\cup\nabla_2$, then
    the edge $v_1v_{\lambda-1}$ appears in exactly
    two of $\nabla_{X,Y}$, $\nabla_1$, $\nabla_2$. Observe, that for any vertex
    $v\in [n]$, we have
    \begin{equation}
        \deg_X(v)-\deg_{E(X)\cap\nabla_{X,Y}}+\deg_{E(Y)\cap\nabla_{X,Y}}=\deg_Y(v).
    \end{equation}
    Thus, for any $v\neq v_0,v_\lambda$, we have
    \begin{equation}
        \deg_{E(X)\cap\nabla_{X,Y}}=\deg_{E(Y)\cap\nabla_{X,Y}}.
    \end{equation}

    \medskip

    Suppose that $s_i-v_0v_\lambda$ is not alternating in
    $X$ for $i=1$ and $i=2$. Then $s_i-v_0v_{\lambda}$ is not alternating at
    $v_1$ and $v_{\lambda-1}$, which implies that
    $v_1v_{\lambda-1}\in \nabla_1,\nabla_2$ and $v_1v_{\lambda-1}\notin
    \nabla_{X,Y}$. If, say, $v_1v_{\lambda-1}\in E(X)$, then
    $s_i(v_1,v_1v_{\lambda-1})\in E(X)$, but $s_i-v_0v_\lambda-v_1v_{\lambda-1}$ is alternating (for $i=1,2$);
    thus we have $\deg_{E(X)\cap\nabla_{X,Y}}(v_1)=\deg_{E(Y)\cap\nabla_{X,Y}}(v_1)+2$,
    so $s$ cannot possibly be $(X,Y)$-alternating, a contradiction. The case
    $v_1v_{\lambda-1}\notin E(X)$ similarly leads to a contradiction, therefore
    at least one of $s_1-v_0v_\lambda$ and $s_2-v_0v_\lambda$ must be
    alternating in $X$.

    \medskip

    By swapping $\nabla_1$ with $\nabla_2$ and $s_1$ with $s_2$, we may assume
    that $s_1-v_0v_\lambda$ is alternating in $X$. We claim that
    \begin{equation}\label{eq:s2claim}
        s_2-v_0v_\lambda\text{ is not alternating in }X\Longleftrightarrow
        v_1v_{\lambda-1}\in \nabla_1\cap \nabla_2.
    \end{equation}
    If $v_1v_{\lambda-1}\in \nabla_1,\nabla_2$, then $v_1v_{\lambda-1}\notin
    \nabla_{X,Y}$, and as before, we get a contradiction if
    $s_i-v_0v_{\lambda}$ is alternating in $X$ for both $i=1,2$, so
    $s_2-v_0v_{\lambda}$ must not alternate in $X$. If $s_2-v_0v_\lambda$ is not
    alternating in $X$, then $v_1v_{\lambda-1}\in\nabla_2$. Thus if $s_2-v_0v_\lambda$ is not
    alternating in $X$ and $v_1v_{\lambda-1}\notin \nabla_1$, then
    $v_1v_{\lambda-1}\in \nabla_{X,Y}$, and so
    $|\deg_{E(X)\cap\nabla_{X,Y}}(v_1)-\deg_{E(Y)\cap\nabla_{X,Y}}(v_1)|=2$, a
    contradiction.

    \medskip

    Let us define now 4 auxiliary graphs.
    \begin{align*}
    X_1&=\left\{
        \begin{array}{ll}
    		X\triangle v_0v_{\lambda}, &\text{if $s_1$ is not alternating in $X$}\\
    		X, &\text{if $s_1$ is alternating in $X$}\\
        \end{array}
    \right.\\
    Y_1&=X_1\triangle\nabla_1\\
    X_2&=\left\{
        \begin{array}{ll}
    		Y_1\triangle v_0v_{\lambda}, &\text{if $s_2$ is not alternating in
            $Y_1$}\\
    		Y_1, &\text{if $s_2$ is alternating in $Y_1$}\\
        \end{array}
    \right.\\
    Y_2&=X_2\triangle\nabla_2.
    \end{align*}
    By our assumptions, $s_1$ is alternating in $X_1$.
    Furthermore, from~\eqref{eq:s2claim} it follows that $s_2$ is alternating in
    $X_2$. Because $s_i$ defines an alternating trail in $X_i$, we have
    $\deg_{X_i}=\deg_{Y_i}$. Trivially, $E(X_1)\triangle E(X)\subseteq
    \{v_0v_\lambda\}$, and by the assumptions of the lemma,
    \begin{align*}
        E(Y_2)\triangle E(Y)&\subseteq \{v_0v_\lambda\}\cup \big(E(Y_1)\triangle
        \nabla_2\triangle E(Y)\big) \subseteq \{v_0v_\lambda\}\cup
        \big(E(X)\triangle \nabla_1\triangle\nabla_2\triangle E(Y)\big)\\
         E(Y_2)\triangle E(Y)&\subseteq \{v_0v_\lambda\}\cup
                            \big(\nabla_{X,Y}\triangle
                            \nabla_1\triangle\nabla_2\big)\\
         E(Y_2)\triangle E(Y)&\subseteq \{v_0v_\lambda\}
    \end{align*}
    We claim that
    \begin{equation}\label{eq:onealternates}
        \text{$s_1$ is alternating in $X$ or $s_2$ is alternating in $Y_1$ (or
        both).}
    \end{equation}
    Suppose that $s_1$ is not alternating in $X$ and $s_2$ is not alternating
    in $Y_1$. Then $X_1=X\triangle v_0v_\lambda$ and $X_2=Y_1\triangle
    v_0v_\lambda$. Because $s_1$ is not alternating in $X$, we have $v_0v_\lambda\in
    \nabla_1$. Also, because $s_2-v_1v_{\lambda-1}$ is not alternating in $Y_1=X_1\triangle
    \nabla_1=X\triangle (\nabla_1\setminus \{v_0v_\lambda\})$,
    $s_2-v_1v_{\lambda-1}$ is not alternating in $X$ either. But this implies
    that $|\deg_{X\cap \nabla_{X,Y}}(v_0)-\deg_{Y\cap \nabla_{X,Y}}(v_0)|\in
    \{2,3\}$ (depends on whether $v_0v_\lambda$ is in $\nabla_{X,Y}$ or not),
    which is a contradiction.

    \medskip

    From now on, we assume that $X_1=X$ or $X_2=Y_1$. In other words, at least
    one of the following three symmetric differences is an empty set:
    \begin{equation}\label{eq:threetriangle}
        E(X)\triangle E(X_1),E(Y_1)\triangle E(X_2),E(Y_2)\triangle
        E(Y)\subseteq \{v_0v_\lambda\}.
    \end{equation}
    If exactly one of them is an empty set, then observe that
    \begin{equation*}
        2={\|\deg_X-\deg_Y\|}_1\equiv
        {\|\deg_{X}-\deg_{X_1}\|}_1+{\|\deg_{Y_1}-\deg_{X_2}\|}_1+{\|\deg_{Y_2}-\deg_{Y}\|}_1\equiv
        2+2\pmod{4},
    \end{equation*}
    which is a contradiction. Thus there are exactly two empty sets on the left hand side of
    \cref{eq:threetriangle}. From
    $\deg_{X_i}=\deg_{Y_i}$ for $i=1,2$, it follows that
    \begin{equation}\label{eq:degXiYi}
        \deg_{X_i},\deg_{Y_i}\in \{\deg_X,\deg_Y\}\text{\ for }i=1,2.
    \end{equation}
    In other words, we have shown that
    $(X_i,Y_i,s_i)\in \mathfrak{C}_\mathrm{id}\cap \mathfrak{D}_{M'}$ for $i=1,2$, and we may
    proceed with the reduction. By~\eqref{eq:onealternates}, we have three
    cases:
    \begin{align*}
        \varUpsilon_{M}(X,Y,s)&=\left\{
        \begin{array}{ll}
            X\xrightarrow{\text{toggle $v_0v_\lambda$}}
            \varUpsilon_{M'}(X_1,Y_1,s_1)\to
            \varUpsilon_{M'}(X_2,Y_2,s_2)\to Y &\text{if $s_1$ is not
            alternating in $X$},\\
            X\to
            \varUpsilon_{M'}(X_1,Y_1,s_1)\xrightarrow{\text{toggle $v_0v_\lambda$}}
            \varUpsilon_{M'}(X_2,Y_2,s_2)\to Y &\text{if $s_2$ is not
            alternating in $Y_1$},\\
            X\to
            \varUpsilon_{M'}(X_1,Y_1,s_1)\to
            \varUpsilon_{M'}(X_2,Y_2,s_2)\xrightarrow{\text{toggle $v_0v_\lambda$}}Y &\text{otherwise},\\
        \end{array}
        \right.
    \end{align*}
    where the $\to$ signs simply represent joining two sequences (repeated graphs
    are dropped from the sequence). By the above observations about the symmetric
    differences and~\eqref{eq:degXiYi}, $\varUpsilon_M(X,Y,s)$ is indeed a path
    in the desired Markov-graph.

    \medskip

    $M'-A_Z$ is $c$-tight by the properties of the precursor on
    $\mathfrak{C}_\mathrm{id}$. Therefore $M-A_Z$ is $(c+3)$-tight.

    \medskip

    \emph{Task 2: constructing $\pi_M(X,Y,s)$.} We have to construct a connected
    $\pi_M(X,Y,s,Z)$ from the current $\pi_{M'}(X_i,Y_i,s_i,Z)$ (where $Z\in
    \varUpsilon_{M'}(X_i,Y_i,s_i)$). Notice that $L(\nabla_{X,Y},s)-\nabla_i$
    (delete $\nabla_i$ from the vertex set of the line graph) has at most $c'+1$
    components, since $L(\nabla_{X,Y},s)$ is a path. Furthermore,
    $L(\nabla_i,\pi_{M'}(X_i,Y_i,s_i))-(\nabla_i\setminus \nabla_{X,Y})$ has at
    most $2$ components (since $|\nabla_i\setminus \nabla_{X,Y}|\le 2$). For
    $Z\in \varUpsilon_{M'}(X_i,Y_i,s_i)$, let
    \begin{equation*}
        \sigma_Z=\pi_{M'}(X_i,Y_i,s_i,Z)|_{\nabla_{X,Y}}\cup
        s|_{\nabla_{X,Y}\setminus \nabla_i}.
    \end{equation*}
    The graph $L(\nabla_{X,Y},\sigma_Z)$ has at most $c'+3$ components because
    $\pi_{M'}(X_i,Y_i,s_i)|_{\nabla_{X,Y}}$ and $s|_{\nabla_{X,Y}\setminus
    \nabla_i}$ are composed of at most $2$ and $c'+1$ trails, respectively.
    Note, that
    \begin{equation*}
        \sigma_{X_i}=s_i|_{\nabla_{X,Y}}\cup s|_{\nabla_{X,Y}\setminus
        \nabla_i},
    \end{equation*}
    and thus $|\sigma_{X_i}\triangle s|\le 2(c'+3)$.

    \medskip

    We claim that there exists $\sigma'_Z\in \Pi(\nabla_{X,Y})$ such that
    $\sigma'_Z\supseteq \sigma_Z$ (extends $\sigma_Z$) and
    $|\sigma'_Z\triangle \sigma_Z|\le 2(c'+3)$. Let $U_x=\{xy\in
    \nabla_{X,Y}\ |\ (x,xy)\notin \mathrm{dom}(\sigma_Z)\}$ be the set of
    unpaired edges incident to $x$. In total, we have $\sum_{x\in
    [n]}|U_x|\le 2(c'+3)$. It is sufficient now to define
     $\sigma'_Z(x,\bullet)$ on $U_x$ for every $x\in [n]$. To do so, observe that:
    \begin{equation*}
        |U_x|=\deg_{\nabla_{X,Y}}(x)-|\{
            (x,xy)\in\mathrm{dom}(\pi_{M'}(X_i,Y_i,s_i)|_{\nabla_{X,Y}})\}|-|\{
    (x,xy)\in\mathrm{dom}(s|_{\nabla_{X,Y}\setminus \nabla_i})\}|.
    \end{equation*}
    Then the parity of $|U_x|$ satisfies:
    \begin{align*}
             |U_x|&\equiv \deg_{\nabla_{X,Y}}(x)+|\{
    (x,xy)\in\mathrm{dom}(s|_{\nabla_{X,Y}\setminus \nabla_i})\}|\pmod{2}\\
             |U_x|&\equiv\deg_{\nabla_{X,Y}}(x)+|\{ xy\in {\nabla_{X,Y}\setminus \nabla_i}\ |\
        s(x,xy)\in {\nabla_{X,Y}\setminus \nabla_i}\}|\pmod{2}\\
                 |U_x|&\equiv \deg_{\nabla_{X,Y}}(x)+I_{x=v_0}\cdot I_{v_0v_1\notin
                 \nabla_i}+I_{x=v_\lambda}\cdot I_{v_{\lambda-1}v_\lambda\notin \nabla_i}\pmod{2}
     \end{align*}
     From the last congruence it follows that $|U_x|$ is even for $x\neq
     v_0,v_\lambda$, so we may choose $\sigma'_Z(x,\bullet)$ such that it pairs
     the edges in $U_x$. If $v_0v_1\notin\nabla_i$, then $|U_{v_0}|$ is even, and
     we may choose $\sigma'_Z(v_0,\bullet)$ such that it pairs the edges in $U_x$
     (note that $\sigma'_Z(v_0,v_0v_1)=\sigma_Z(v_0,v_0v_1)=v_0v_1$). If
     $v_0v_1\in \nabla_i$, then $|U_{v_0}|$ is odd and by definition
     $\pi_{M'}(X_i,Y_i,s_i,Z)$ cannot map $(v_0,v_0v_1)$ to $v_0v_1$; thus
     $\sigma'_Z(v_0,\bullet)$ can pair all edges of $U_{v_0}$ except one, which
     $\sigma'_Z(v_0,\bullet)$ will map to itself. Define
     $\sigma'_Z(v_\lambda,\bullet)$ on $U_{v_\lambda}$ analogously. In any case,
     $L(\nabla_{X,Y},\sigma'_Z)$ is composed of a path and a certain number of
     cycles, in total still no more than $c'+3$ components.

    \medskip

    Furthermore, we claim that there exists $\pi_Z\in \Pi(\nabla_{X,Y})$ such that
    $|\pi_Z\triangle \sigma'_Z|\le 4(c'+3)$ and $L(\nabla_{X,Y},\pi_Z)$ is
    connected. The pairing function $\sigma'_Z$ defines one open trail and at most
    $2(c'+3)-1$ closed trails in $\nabla_{X,Y}$, and these trails partition the
    edge set of the connected trail $s$. Any closed trail intersecting the open
    trail can be incorporated into the open trail by changing the pairing
    function such that the symmetric difference increases by 4. 
    \medskip

    Let the pairing function associated to $Z$ be
    \begin{align*}
    \pi_M(X,Y,s,Z)=\pi_Z.
    \end{align*}
    We know that $\sigma_Z$ alternates with at most $3c$ exceptions in $Z$, since
    $|(E(X_i)\triangle E(Z))\setminus \nabla_{i}|\le c$ and
    $\pi_{M'}(X_i,Y_i,s_i)$ alternates in $Z$ with at most $c$ exceptions. Since $|\pi_Z\triangle \sigma_Z|\le 6(c'+3)$, we get that
    $\pi_Z$ alternates in $Z$ with at most $9(c'+3)$ exceptions.

    \medskip

    \emph{Task 3: constructing $B_M(X,Y,s)$.} Let us identify the ends of intervals of $\nabla_i$ edges in a pairing function
    $\vartheta$:
    \begin{align*}
        T_Z(\vartheta)&=\{ (x,xy)\ |\ xy\in \nabla_i\text{ and }
         \left((x,xy)\notin\mathrm{dom}(\vartheta)\text{ or }\vartheta(x,xy)\notin
     \nabla_i\right)\}\\
         C_Z(\vartheta)&=\{ \min_\mathrm{lex}V(L)\ |\ L\text{ is a component in
         }L(\nabla_i\cap \nabla_{X,Y},\vartheta)\},
    \end{align*}
    where $\displaystyle\min_\mathrm{lex}V(L)$ stands for the lexicographically minimal edge
    in $V(L)$. Retracing the steps by which $\pi_Z$ is obtained, we have
    \begin{align}
        |T_Z(\pi_Z)|&\le |T_Z(\sigma_Z)|+6(c'+3)\le
        \big|T_Z(\pi_{M'}(X_i,Y_i,s_i)|_{\nabla_{X,Y}}\big|+6(c'+3)\le
        8+6(c'+3),\\
        |C_Z(\pi_Z)|&\le |C_Z(\sigma_Z)|+6(c'+3)\le
        \big|C_Z(\pi_{M'}(X_i,Y_i,s_i)|_{\nabla_{X,Y}}\big|+6(c'+3)\le
        2+6(c'+3).
    \end{align}
    Let
    \begin{align*}
    B_M(X,Y,s,Z)&=\left\{
    \begin{array}{ll}
         (0,Z\equiv X) \hspace{6em} \text{if }Z=X,Y\\ \\
         \big(2,\lambda,v_0v_\lambda,v_{0}v_{\lambda}\in
         E(X),v_1,v_{\lambda-1},(s_i|_{\nabla_{X,Y}}\cup s|_{\nabla_{X,Y}\setminus
     \nabla_i})\triangle s,T_Z(\pi_Z),C_Z(\pi_Z),\\
         \hspace{1em}i,\pi_Z\triangle \sigma_Z,\pi_M(X,Y,s,Z)|_{\nabla_i}\triangle\pi_{M'}(X_i,Y_i,s_i,Z),B_{M'}(X_i,Y_i,s_i,Z)\big)
 \\
\hspace{11em}\text{if }Z\in \varUpsilon_{M'}(X_i,Y_i,s_i)\setminus \{X,Y\}\\
    \end{array}\right.
    \end{align*}
    Every set listed in $B_M(X,Y,s,Z)$ has at most a constant size, so
    the size of the range of $B_M$ increases by a polynomial factor of $n$ (of
    at most $n^{60c'+240}$).
    It remains to show that $\varPsi$ is still well-defined. This is trivial if
    $Z=X,Y$. Suppose from now on, that $Z\in \varUpsilon_{M'}(X_i,Y_i,s_i)$.
    Since $L(\nabla_{X,Y},\pi_Z)$ is composed of paths and cycles, $T_Z$
    determines the ends of intervals of consecutive $\nabla_i$ edges in the trails determined by $\pi_Z$,
    and $C_Z$ determines those $L(\nabla_{X,Y},\pi_Z)$ components whose
    vertex set is a subset of $\nabla_i$. Therefore $\nabla_{X,Y},T_Z,C_Z$ and
    $\pi_Z$ determine $\nabla_i$. Thus $\pi_Z|_{\nabla_i}=\pi_M(X,Y,s,Z)|_{\nabla_i}$ can be
    determined, and in turn $\pi_{M'}(X_i,Y_i,s_i,Z)$ can be reconstructed too.
    Because we have a precursor on $\mathfrak{C}_\mathrm{id}$, we get
    \begin{equation*}
         (X_i,Y_i,s_i)=\varPsi(Z,\nabla_i,\pi_{M'}(X_i,Y_i,s_i,Z),B_{M'}(X_i,Y_i,s_i,Z)).
    \end{equation*}
    Notice, that $X-v_0v_\lambda=X_1-v_0v_\lambda$ and
    $Y-v_0v_\lambda=Y_2-v_0v_\lambda$.
    Since $v_0v_\lambda\in E(Y)$ if and only if $v_0v_\lambda\in
    E(X)\triangle \nabla_{X,Y}$, both $X$ and $Y$ are determined by $(X_i,Y_i)$.
    Furthermore, $\sigma_Z|_{\nabla_{X,Y}\setminus
    \nabla_i}=s|_{\nabla_{X,Y}\setminus \nabla_i}$ is already determined, and
    together with $s_i|_{\nabla_{X,Y}}$ and the auxiliary parameters, they
    determine $s$.

    \medskip

    We have now defined the precursor on any
    $(X,Y,s)\in\mathfrak{C}_\mathrm{thin}$ where $s$ is an
    open trail of odd length. Suppose from now on that
    $(X,Y,s)\in\mathfrak{C}_\mathrm{thin}$ where $s$ is an open trail of even
    length.

    \paragraph{Case B: $s=v_0v_1\ldots v_{\lambda-1}v_\lambda$ is an open trail of
    even length and $v_0=v_{\lambda-1}$.}
    We will perform exactly one
    hinge-flip $\{v_{\lambda-2}v_0,v_{\lambda-2}v_\lambda\}$. This case is very
    similar to when $s$ is an open trail of odd length and $\nabla_2=\emptyset$,
    so we will give the construction, but checking the precursor properties is
    left to the diligent reader. Let
    \begin{align*}
        s_1&=v_0v_1\ldots v_{\lambda-2}v_\lambda v_0\\
        \nabla_1&=\nabla_{X,Y}\triangle
        \{v_{\lambda-2}v_0,v_{\lambda-2}v_\lambda\}\\
        M'&=\left\{
            \begin{array}{ll}
                M+A_{(v_{\lambda-2}v_\lambda)} & \text{if
                    }M_{v_{\lambda-2}v_\lambda}=0\\
                M-A_{(v_{\lambda-2}v_\lambda)} & \text{if
                    }M_{v_{\lambda-2}v_\lambda}=2\\
                M & \text{if
                    }M_{v_{\lambda-2}v_\lambda}=1
            \end{array}
            \right.\\
    X_1&=\left\{
        \begin{array}{ll}
    		X\triangle \{v_{\lambda-2}v_0,v_{\lambda-2}v_\lambda\}, &\text{if $s_1$ is not alternating in $X$}\\
    		X, &\text{if $s_1$ is alternating in $X$}\\
        \end{array}
    \right.\\
    Y_1&=\left\{
        \begin{array}{ll}
    		Y, &\text{if $s_1$ is not alternating in $X$}\\
    		Y\triangle \{v_{\lambda-2}v_0,v_{\lambda-2}v_\lambda\}, &\text{if $s_1$ is alternating in $X$}\\
        \end{array}
    \right.\\
        \varUpsilon_{M}(X,Y,s)&=\left\{
        \begin{array}{ll}
            X\xrightarrow{\text{hinge-flip}}
            \varUpsilon_{M'}(X_1,Y_1,s_1) &\text{if $s_1$ is not alternating in $X$}\\
    		\varUpsilon_{M'}(X_1,Y_1,s_1)\xrightarrow{\text{hinge-flip}}Y &\text{if $s_1$ is alternating in $X$}
        \end{array}
        \right.
    \end{align*}
    We define $\pi_M(X,Y,s,Z)$ simply by replacing the
    $(v_{\lambda-2},v_{\lambda-2}v_{\lambda})$ with
    $(v_{\lambda-2},v_{\lambda-2}v_0)$ in the pairing
    $\varUpsilon_{M'}(X_1,Y_1,s_1)$, remove
    $(v_{\lambda},v_{\lambda-2}v_{\lambda})$ from the pairing (and create the
    self-paired edges at $v_0$ and $v_{\lambda}$). Defining a suitable $B_M(X,Y,s,Z)$ is
    straightforward and it is also left to the reader.

    \paragraph{Case C: $s=v_0v_1\ldots v_{\lambda-1}v_\lambda$ is an open trail of
    even length and $v_0\neq v_{\lambda-1}$.}
    Let $s'=s-v_{\lambda-1}v_\lambda$ and observe that we have already defined
    $\varUpsilon_M(X,Y\triangle v_{\lambda-1}v_\lambda,s')$ in the previous
    subsection, since $L(\nabla-v_{\lambda-1}v_\lambda,s')$ is a path of odd
    length.

    \medskip

    However, choosing $\varUpsilon_M(X,Y,s)=\varUpsilon_M(X,Y\triangle
    v_{\lambda-1}v_\lambda,s')\to Y$ violates the precursor property because the
    degree at $v_{\lambda-1}$ may become too small or too large when the
    edge-toggle is performed on $v_0v_{\lambda-1}$ (the rest of the steps are
    switches). Fortunately, this is very easy to fix: simply replace the
    edge-toggle on $v_0v_{\lambda-1}$ in the previous definitions of
    $\varUpsilon_M$ with the hinge-flip between $v_0v_{\lambda-1}$ and
    $v_{\lambda-1}v_\lambda$ to obtain $\varUpsilon_M(X,Y\triangle
    {v_\lambda-1}v_\lambda,s')$. Since every other
    step in $\varUpsilon_M(X,Y\triangle v_{\lambda-1}v_\lambda,s')$ is a switch,
    this ensures that for any $Z\in \varUpsilon_M(X,Y,s)$ we have $\deg_Z\in
    \{\deg_X,\deg_Y\}$.

    \medskip

    We also need to define $\pi_M$ and $B_M$. Since the odd length case already
    describes a trail from $v_0$ to $v_{\lambda-1}$, we can join
    $v_{\lambda-1}v_\lambda$ to the edge ending the trail at $v_{\lambda-1}$ to
    obtain a suitable $\pi_M(X,Y,s,Z)$ (in the derived bounds, this
    essentially increases $c'$ by 1). Furthermore, we also need to store in
    $B_M(X,Y,s,Z)$ that the identify of $v_{\lambda-1}$ and $v_\lambda$. Since
    for any $Z\in\varUpsilon_M(X,Y,s)$. As a result, the range of $B_M(X,Y,s,Z)$
    increases by a polynomial factor (also note that the parameter of the
    precursor has to be increased by a constant to accommodate
    $v_{\lambda-1}v_\lambda$).

    \medskip

    The well-definedness of $\varPsi$ follows, because the constant number of
    differences compared to the previous case are all stored/noted in $B_M(X,Y,s,Z)$.

    \medskip

    One can say that this is proof is not very detailed, but we think it is not
    worth describing the details, because it would be an almost verbatim
    repetition of the first two cases.
\end{proof}

\section{Proof of \Cref{thm:main}.}

Let $\mathcal{I}$ be a set of weakly $P$-stable thin degree sequence intervals.
By \Cref{lemma:Cthin}, there exists a precursor with parameter $c=12$ on
$\mathfrak{C}_\mathrm{id}$. We want to apply \Cref{lemma:CI} to prove that
there exists a precursor on $\mathfrak{C}_\mathrm{thin}$ with some fixed
parameter. Showing this, \Cref{thm:main} follows: the precursor can be
extended to $\mathfrak{R}_\mathrm{thin}$ by \Cref{lemma:extension}, which is
sufficient for proving rapid maxing of $\mathbb{G}(\ell,u)$ on every
$(\ell,u)\in\mathcal{I}$ by \Cref{thm:precursor}. Suppose $(X,Y,s)\in
\mathfrak{C}_\mathrm{thin}$. If $s$ is a closed trail, then
$(X,Y,s)\in\mathfrak{C}_\mathrm{id}$, on which we have already defined a
precursor.

\medskip

Suppose from now on that $s=v_0v_1\ldots v_{\lambda}$ is an open trail of odd
length (possibly 1). By the KD-lemma (\Cref{lemma:decomp}), $v_0\neq v_\lambda$.
To apply \Cref{lemma:CI}, it is enough to define $s_1$ and $s_2$, since their
domains determine $\nabla_1,\nabla_2$. The premises of \Cref{lemma:CI} are
elementary and trivial to check once $s_1\in \Pi(\nabla_1)$ and $s_2\in
\Pi(\nabla_2)$ are given. We will finish the proof by a complete case analysis,
where we provide a suitable $s_1$ and $s_2$ for each case.

\medskip

We will prove that \Cref{lemma:CI} holds for $\mathfrak{C}_\mathrm{thin}$ with
$c'=2$. We will distinguish between 8 main cases, 3 of these have 2 subcases.
The cases will be distinguished based on the relationship between
$v_0,v_1,v_{\lambda-1},v_{\lambda}$ and $s$. Recall that $v_0\neq v_\lambda$. On
the corresponding figures, by exchanging $X$ and $Y$, we may suppose that
$v_0v_1\in E(X)$. Thus the edges of $X$ are drawn with solid lines, edges of $Y$
with dashed lines, and unknown is dotted. Those pairs that are contained in
$\nabla_{X,Y}$ are joined by thick solid or dashed lines. The similarly thick
dash-dotted lines represent $(X,Y)$-alternating segments of the trail $s$.
Recall, that a trail may visit a vertex multiple times, but it can only traverse
an edge at most once.

\paragraph{Case 1.} First we assume that $v_0 v_\lambda\notin \nabla_{X,Y}$.

\begin{minipage}[c]{.5\textwidth}
\begin{align*}
    s_1 &=v_0v_1\ldots v_{\lambda-1}v_{\lambda}v_0,\\
    s_2 &=\emptyset.
\end{align*}
\end{minipage}%
\begin{minipage}[c]{.5\textwidth}
    \centering
    \begin{tikzpicture}
        \node[draw, circle, inner sep=0pt, minimum size=14pt] (v0) at (0,-1) {$v_0$};
        \node[draw, circle, inner sep=0pt, minimum size=14pt] (v1) at (-1,-1) {$v_1$};
        \node[draw, circle, inner sep=0pt, minimum size=14pt] (v2) at (-1.5,-2) {$v_2$};
        \node[draw, circle, inner sep=0pt, minimum size=14pt] (vl0) at (2,-1) {$v_\lambda$};
        \node[draw, circle, inner sep=0pt, minimum size=14pt] (vl1) at (3,-1) {\scriptsize $v_{\lambda-1}$};
        \node[draw, circle, inner sep=0pt, minimum size=14pt] (vl2) at (3.5,-2) {\scriptsize $v_{\lambda-2}$};

        \draw (v0) edge[dotted] (vl0);
        \draw[thick] (v0)--(v1) (v1) edge[dashed] (v2) (vl0)--(vl1) (vl1)
            edge[dashed] (vl2);

        \draw[thick,dashdotted] plot [smooth, tension=2] coordinates { (v2.south)
            (1,-3.0) (vl2.south) };
    \end{tikzpicture}
\end{minipage}

\medskip

\textbf{From now on,} we assume that $v_0 v_\lambda\in \nabla_{X,Y}$. In other words, the
open trail $s\in \Pi(\nabla_{X,Y})$ traverses $v_0v_\lambda$, that is, there exists
$2\le j\le \lambda-2$ such that $\{v_0,v_\lambda\}=\{v_j,v_{j+1}\}$.

\paragraph{Case 2.} We assume in this case that $j$ is even.

\paragraph{Case 2a.} If $v_0=v_j$ and $v_\lambda=v_{j+1}$, then let

\begin{minipage}[c]{.5\textwidth}
\begin{align*}
	s_1&=v_0v_1\ldots v_{j-1}v_j,\\
    s_2&=v_{j+1}v_{j+2}\ldots v_{\lambda-1}v_{\lambda}.
\end{align*}
\end{minipage}%
\begin{minipage}[c]{.5\textwidth}
    \centering
    \begin{tikzpicture}
        \node[draw, circle, inner sep=0pt, minimum size=14pt] (v0) at (0,0) {$v_0$};
        \node[draw, circle, inner sep=0pt, minimum size=14pt] (v1) at (1,-1) {$v_1$};
        \node[draw, circle, inner sep=0pt, minimum size=14pt] (vj1) at (-1,-1) {\scriptsize $v_{j-1}$};
        \node[draw, circle, inner sep=0pt, minimum size=14pt] (vj2) at (4,-1) {\scriptsize $v_{j+2}$};
        \node[draw, circle, inner sep=0pt, minimum size=14pt] (vl0) at (3,0) {$v_\lambda$};
        \node[draw, circle, inner sep=0pt, minimum size=14pt] (vl1) at (2,-1) {\scriptsize $v_{\lambda-1}$};

        \draw[thick] (v0)--(v1) (v0) edge[dashed] (vj1) (vl0)--(vl1) (vl0) edge[dashed] (vj2) (v0)--(vl0);
        \draw[thick,dashdotted] plot [smooth, tension=2] coordinates { (v1.south) (0,-2) (vj1.south) };
        \draw[thick,dashdotted] plot [smooth, tension=2] coordinates { (vj2.south) (3,-2) (vl1.south) };
    \end{tikzpicture}
\end{minipage}

\paragraph{Case 2b.} If $v_0=v_{j+1}$ and $v_\lambda=v_{j}$, then let

\begin{minipage}[c]{.5\textwidth}
\begin{align*}
	s_1&=v_0v_1\ldots v_{j-1}v_{j}v_{\lambda-1}v_{\lambda-2}\ldots v_{j+2}v_{j+1},\\
    s_2&=\emptyset.
\end{align*}
\end{minipage}%
\begin{minipage}[c]{.5\textwidth}
    \centering
    \begin{tikzpicture}
        \node[draw, circle, inner sep=0pt, minimum size=14pt] (v0) at (0,0) {$v_0$};
        \node[draw, circle, inner sep=0pt, minimum size=14pt] (v1) at (1,-1) {$v_1$};
        \node[draw, circle, inner sep=0pt, minimum size=14pt] (vj1) at (-1,-1) {\scriptsize $v_{j+2}$};
        \node[draw, circle, inner sep=0pt, minimum size=14pt] (vj2) at (4,-1) {\scriptsize $v_{j-1}$};
        \node[draw, circle, inner sep=0pt, minimum size=14pt] (vl0) at (3,0) {$v_\lambda$};
        \node[draw, circle, inner sep=0pt, minimum size=14pt] (vl1) at (2,-1) {\scriptsize $v_{\lambda-1}$};

        \draw[thick] (v0)--(v1) (v0) edge[dashed] (vj1) (vl0)--(vl1) (vl0) edge[dashed] (vj2) (v0)--(vl0);
        \draw[thick,dashdotted] plot [smooth, tension=2] coordinates {
            (v1.south) (2.5,-2) (vj2.south) };
        \draw[thick,dashdotted] plot [smooth, tension=2] coordinates { (vl1.south) (0.5,-2) (vj1.south) };
    \end{tikzpicture}
\end{minipage}

\textbf{From now on,} we assume that $j$ is odd.

\paragraph{Case 3.} If $v_{j}=v_\lambda$ and $v_{j+1}=v_0$, then let

\begin{minipage}[c]{.5\textwidth}
\begin{align*}
	s_1&=v_0v_1\ldots v_j v_{j+1},\\
	s_2&=v_{j}v_{j+1}\ldots v_{\lambda-1}v_{\lambda}.
\end{align*}
\end{minipage}%
\begin{minipage}[c]{.5\textwidth}
    \centering
    \begin{tikzpicture}
        \node[draw, circle, inner sep=0pt, minimum size=14pt] (v0) at (0,0) {$v_0$};
        \node[draw, circle, inner sep=0pt, minimum size=14pt] (v1) at (1,-1) {$v_1$};
        \node[draw, circle, inner sep=0pt, minimum size=14pt] (vj1) at (-1,-1) {\scriptsize $v_{j+2}$};
        \node[draw, circle, inner sep=0pt, minimum size=14pt] (vj2) at (4,-1) {\scriptsize $v_{j-1}$};
        \node[draw, circle, inner sep=0pt, minimum size=14pt] (vl0) at (3,0) {$v_\lambda$};
        \node[draw, circle, inner sep=0pt, minimum size=14pt] (vl1) at (2,-1) {\scriptsize $v_{\lambda-1}$};

        \draw[thick] (v0)--(v1) (v0) -- (vj1) (vl0)--(vl1) (vl0) -- (vj2) (v0) edge[dashed] (vl0);
        \draw[thick,dashdotted] plot [smooth, tension=2] coordinates {
            (v1.south) (2.5,-2) (vj2.south) };
        \draw[thick,dashdotted] plot [smooth, tension=2] coordinates { (vl1.south) (0.5,-2) (vj1.south) };
    \end{tikzpicture}
\end{minipage}

\medskip

\textbf{From now on,} we assume that $v_{j}=v_0$ and
$v_{j+1}=v_\lambda$.
\paragraph{Case 4.} If $v_1=v_{\lambda-1}$, then let

\begin{minipage}[c]{.5\textwidth}
\begin{align*}
	s_1&=v_{1}v_{2}\ldots v_{j}v_{j+1}v_{1},\\
	s_2&=v_j v_{j+1}\ldots v_{\lambda-2}v_{\lambda-1}v_0.
\end{align*}
\end{minipage}%
\begin{minipage}[c]{.5\textwidth}
    \centering
    \begin{tikzpicture}
        \node[draw, circle, inner sep=0pt, minimum size=14pt] (v0) at (0,0) {$v_0$};
        \node[draw, circle, inner sep=0pt, minimum size=14pt] (v1) at (1,-1) {$v_1$};
        \node[draw, circle, inner sep=0pt, minimum size=14pt] (vj1) at (-1,-1) {\scriptsize $v_{j-1}$};
        \node[draw, circle, inner sep=0pt, minimum size=14pt] (vj2) at (3,-1) {\scriptsize $v_{j+2}$};
        \node[draw, circle, inner sep=0pt, minimum size=14pt] (vl0) at (2,0) {$v_\lambda$};

        \draw[thick] (v0)--(v1) (v0) -- (vj1) (vl0)--(v1) (vl0) -- (vj2) (v0) edge[dashed] (vl0);
        \draw[thick,dashdotted] plot [smooth, tension=1]
            coordinates { (v1.south west) (0,-2) (vj1.south east) };
        \draw[thick,dashdotted] plot [smooth, tension=1]
            coordinates { (v1.south east) (2,-2) (vj2.south west) };
    \end{tikzpicture}
\end{minipage}

\medskip

\textbf{From now on,} we assume that $v_1\neq v_{\lambda-1}$.

\paragraph{Case 5.} If $v_1v_{\lambda-1}\notin \nabla_{X,Y}$.

\begin{minipage}[c]{.5\textwidth}
\begin{align*}
	s_1&=v_{j}v_{j-1}\ldots v_2v_1v_{\lambda-1}v_{\lambda-2}\ldots v_{j+1} v_j,\\
	s_2&=v_0v_1v_{\lambda-1}v_{\lambda}v_0.
\end{align*}
\end{minipage}%
\begin{minipage}[c]{.5\textwidth}
    \centering
    \begin{tikzpicture}
        \node[draw, circle, inner sep=0pt, minimum size=14pt] (v0) at (0,0) {$v_0$};
        \node[draw, circle, inner sep=0pt, minimum size=14pt] (v1) at (1,-1) {$v_1$};
        \node[draw, circle, inner sep=0pt, minimum size=14pt] (v2) at (1,-2) {$v_2$};
        \node[draw, circle, inner sep=0pt, minimum size=14pt] (vj1) at (-1,-1) {\scriptsize $v_{j-1}$};
        \node[draw, circle, inner sep=0pt, minimum size=14pt] (vj2) at (5,-1) {\scriptsize $v_{j+2}$};
        \node[draw, circle, inner sep=0pt, minimum size=14pt] (vl0) at (4,0) {$v_\lambda$};
        \node[draw, circle, inner sep=0pt, minimum size=14pt] (vl1) at (3,-1) {\scriptsize $v_{\lambda-1}$};
        \node[draw, circle, inner sep=0pt, minimum size=14pt] (vl2) at (3,-2) {\scriptsize $v_{\lambda-2}$};

        \draw[dotted] (v1) -- (vl1);
        \draw[thick] (v0)--(v1) (v0) -- (vj1) (vl0)--(vl1) (vl0) -- (vj2) (v0)
            edge[dashed] (vl0) (v1) edge[dashed] (v2) (vl1) edge[dashed] (vl2);
        \draw[thick,dashdotted] plot [smooth, tension=2] coordinates { (v2.south) (-0.5,-2.5) (vj1.south west) };
        \draw[thick,dashdotted] plot [smooth, tension=2] coordinates { (vl2.south) (4.5,-2.5) (vj2.south east) };
    \end{tikzpicture}
\end{minipage}

\medskip

\textbf{From now on,} we assume that $v_1 v_{\lambda-1}\in \nabla_{X,Y}$. In other words, the
open trail $s\in \Pi(\nabla_{X,Y})$ traverses $v_1v_{\lambda-1}$, that is, there exists
$1\le k\le \lambda-1$ such that $\{v_1,v_{\lambda-1}\}=\{v_k,v_{k+1}\}$.

\medskip

First, we assume that $\boldmath k<j$; the case $k>j$ will follow
easily by symmetry.

\paragraph{Case 6.} Suppose that $k$ is even.

\paragraph{Case 6a.} If $v_k=v_{1}$ and $v_{k+1}=v_{\lambda-1}$, then let

\begin{minipage}[c]{.5\textwidth}
\begin{align*}
	s_1&=v_{k+1}v_{k+2}\ldots v_{j-1}v_{j}v_{j+1}v_{\lambda-1},\\
	s_2&=v_0v_1\ldots v_{k-1}v_{k}v_{\lambda-1}v_{\lambda-2}\ldots v_{j+2}v_{j+1}v_j
\end{align*}
\end{minipage}%
\begin{minipage}[c]{.5\textwidth}
    \centering
    \begin{tikzpicture}
        \node[draw, circle, inner sep=0pt, minimum size=14pt] (v0) at (0,0) {$v_0$};
        \node[draw, circle, inner sep=0pt, minimum size=14pt] (v1) at (1,-1) {$v_1$};
        \node[draw, circle, inner sep=0pt, minimum size=14pt] (v2) at (1,-2) {$v_2$};
        \node[draw, circle, inner sep=0pt, minimum size=14pt] (vj1) at (-1,-1) {\scriptsize $v_{j-1}$};
        \node[draw, circle, inner sep=0pt, minimum size=14pt] (vj2) at (5,-1) {\scriptsize $v_{j+2}$};
        \node[draw, circle, inner sep=0pt, minimum size=14pt] (vl0) at (4,0) {$v_\lambda$};
        \node[draw, circle, inner sep=0pt, minimum size=14pt] (vl1) at (3,-1) {\scriptsize $v_{\lambda-1}$};
        \node[draw, circle, inner sep=0pt, minimum size=14pt] (vl2) at (3,-2) {\scriptsize $v_{\lambda-2}$};

        \node[draw, circle, inner sep=0pt, minimum size=14pt] (vk1) at (0,-1.5)
            {\scriptsize $v_{k-1}$};
        \node[draw, circle, inner sep=0pt, minimum size=14pt] (vk2) at (4,-1.5)
            {\scriptsize $v_{k+2}$};

        \draw[thick] (v0)--(v1) (v0) -- (vj1) (vl0)--(vl1) (vl0) -- (vj2) (v0)
            edge[dashed] (vl0) (v1) edge[dashed] (v2) (vl1) edge[dashed] (vl2)
            (vk1) edge[dashed] (v1) (v1) -- (vl1) (vl1) edge[dashed] (vk2);
        \draw[thick,dashdotted] plot [smooth, tension=2] coordinates {
            (v2.south) (0.25,-2.5) (vk1.south west) };
        \draw[thick,dashdotted] plot [smooth, tension=2] coordinates {
            (vk2.south) (1,-3) (vj1.south west) };
        \draw[thick,dashdotted] plot [smooth, tension=2] coordinates {
            (vj2.south) (4.5,-2.5) (vl2.south east) };
    \end{tikzpicture}
\end{minipage}

\paragraph{Case 6b.} If $v_{k+1}=v_{1}$ and $v_{k}=v_{\lambda-1}$, then let 

\begin{minipage}[c]{.5\textwidth}
\begin{align*}
	s_1&=v_0v_{1}\ldots v_{k-1}v_{k}v_{\lambda}v_0,\\
    s_2&=v_{k}v_{k+1}v_{k+2}\ldots v_{\lambda-2}v_{\lambda-1}.
\end{align*}
\end{minipage}%
\begin{minipage}[c]{.5\textwidth}
    \centering
    \begin{tikzpicture}
        \node[draw, circle, inner sep=0pt, minimum size=14pt] (v0) at (0,0) {$v_0$};
        \node[draw, circle, inner sep=0pt, minimum size=14pt] (v1) at (1,-1) {$v_1$};
        \node[draw, circle, inner sep=0pt, minimum size=14pt] (v2) at (1,-2) {$v_2$};
        \node[draw, circle, inner sep=0pt, minimum size=14pt] (vj1) at (-1,-1) {\scriptsize $v_{j-1}$};
        \node[draw, circle, inner sep=0pt, minimum size=14pt] (vj2) at (5,-1) {\scriptsize $v_{j+2}$};
        \node[draw, circle, inner sep=0pt, minimum size=14pt] (vl0) at (4,0) {$v_\lambda$};
        \node[draw, circle, inner sep=0pt, minimum size=14pt] (vl1) at (3,-1) {\scriptsize $v_{\lambda-1}$};
        \node[draw, circle, inner sep=0pt, minimum size=14pt] (vl2) at (3,-2) {\scriptsize $v_{\lambda-2}$};

        \node[draw, circle, inner sep=0pt, minimum size=14pt] (vk1) at (0,-1.5)
            {\scriptsize $v_{k+2}$};
        \node[draw, circle, inner sep=0pt, minimum size=14pt] (vk2) at (4,-1.5)
            {\scriptsize $v_{k-1}$};

        \draw[thick] (v0)--(v1) (v0) -- (vj1) (vl0)--(vl1) (vl0) -- (vj2) (v0)
            edge[dashed] (vl0) (v1) edge[dashed] (v2) (vl1) edge[dashed] (vl2)
            (vk1) edge[dashed]  (v1) (v1) -- (vl1) (vl1) edge[dashed] (vk2);
        \draw[thick,dashdotted] plot [smooth, tension=2] coordinates {
            (v2.south) (3.0,-3.0) (vk2.south east) };
        \draw[thick,dashdotted] plot [smooth, tension=2] coordinates {
            (vk1.south) (-1.0,-2) (vj1.south west) };
        \draw[thick,dashdotted] plot [smooth, tension=2] coordinates {
            (vj2.south) (4.5,-2.5) (vl2.south east) };
    \end{tikzpicture}
\end{minipage}

\paragraph{Case 7.} Suppose that $k$ is odd.

\paragraph{Case 7a.} If $v_k=v_{1}$ and $v_{k+1}=v_{\lambda-1}$, then let

\begin{minipage}[c]{.5\textwidth}
\begin{align*}
	s_1&=v_{k+1}v_{k+2}\ldots v_{\lambda-2}v_{\lambda-1},\\
	s_2&=\left\{\begin{array}{ll}
        v_0v_1v_2\ldots v_{k-1}v_k v_{k+1}v_{\lambda}v_0 &\text{ if }k>1,\\
        v_0v_1v_{2}v_{\lambda}v_0 &\text{ if }k=1.
        \end{array}\right.
\end{align*}
\end{minipage}%
\begin{minipage}[c]{.5\textwidth}
    \centering
    \begin{tikzpicture}
        \node[draw, circle, inner sep=0pt, minimum size=14pt] (v0) at (0,0) {$v_0$};
        \node[draw, circle, inner sep=0pt, minimum size=14pt] (v1) at (1,-1) {$v_1$};
        \node[draw, circle, inner sep=0pt, minimum size=14pt] (v2) at (1,-2) {$v_2$};
        \node[draw, circle, inner sep=0pt, minimum size=14pt] (vj1) at (-1,-1) {\scriptsize $v_{j-1}$};
        \node[draw, circle, inner sep=0pt, minimum size=14pt] (vj2) at (5,-1) {\scriptsize $v_{j+2}$};
        \node[draw, circle, inner sep=0pt, minimum size=14pt] (vl0) at (4,0) {$v_\lambda$};
        \node[draw, circle, inner sep=0pt, minimum size=14pt] (vl1) at (3,-1) {\scriptsize $v_{\lambda-1}$};
        \node[draw, circle, inner sep=0pt, minimum size=14pt] (vl2) at (3,-2) {\scriptsize $v_{\lambda-2}$};

        \node[draw, circle, inner sep=0pt, minimum size=14pt] (vk1) at (0,-1.5)
            {\scriptsize $v_{k-1}$};
        \node[draw, circle, inner sep=0pt, minimum size=14pt] (vk2) at (4,-1.5)
            {\scriptsize $v_{k+2}$};

        \draw[thick] (v0)--(v1) (v0) -- (vj1) (vl0)--(vl1) (vl0) -- (vj2) (v0)
            edge[dashed] (vl0) (v1) edge[dashed] (v2) (vl1) edge[dashed] (vl2)
            (vk1) -- (v1) (v1) edge[dashed]  (vl1) (vl1) -- (vk2);
        \draw[thick,dashdotted] plot [smooth, tension=2] coordinates {
            (v2.south) (0.25,-2.5) (vk1.south west) };
        \draw[thick,dashdotted] plot [smooth, tension=2] coordinates {
            (vk2.south) (1,-3) (vj1.south west) };
        \draw[thick,dashdotted] plot [smooth, tension=2] coordinates {
            (vj2.south) (4.5,-2.5) (vl2.south east) };
    \end{tikzpicture}
\end{minipage}
\paragraph{Case 7b.} If $v_{k+1}=v_{1}$ and $v_{k}=v_{\lambda-1}$, then let

\begin{minipage}[c]{.5\textwidth}
\begin{align*}
	s_1&=v_{0}v_{1}v_2\ldots v_{k-2}v_{k-1}v_{\lambda-1}v_{\lambda-2}\ldots
    v_{j+2}v_{j+1}v_j,\\
    s_2&=v_{k}v_{k+1}v_{k+2}\ldots v_{j-1}v_{j}v_{j+1}v_{\lambda-1}.
\end{align*}
\end{minipage}%
\begin{minipage}[c]{.5\textwidth}
    \centering
    \begin{tikzpicture}
        \node[draw, circle, inner sep=0pt, minimum size=14pt] (v0) at (0,0) {$v_0$};
        \node[draw, circle, inner sep=0pt, minimum size=14pt] (v1) at (1,-1) {$v_1$};
        \node[draw, circle, inner sep=0pt, minimum size=14pt] (v2) at (1,-2) {$v_2$};
        \node[draw, circle, inner sep=0pt, minimum size=14pt] (vj1) at (-1,-1) {\scriptsize $v_{j-1}$};
        \node[draw, circle, inner sep=0pt, minimum size=14pt] (vj2) at (5,-1) {\scriptsize $v_{j+2}$};
        \node[draw, circle, inner sep=0pt, minimum size=14pt] (vl0) at (4,0) {$v_\lambda$};
        \node[draw, circle, inner sep=0pt, minimum size=14pt] (vl1) at (3,-1) {\scriptsize $v_{\lambda-1}$};
        \node[draw, circle, inner sep=0pt, minimum size=14pt] (vl2) at (3,-2) {\scriptsize $v_{\lambda-2}$};

        \node[draw, circle, inner sep=0pt, minimum size=14pt] (vk1) at (0,-1.5)
            {\scriptsize $v_{k+2}$};
        \node[draw, circle, inner sep=0pt, minimum size=14pt] (vk2) at (4,-1.5)
            {\scriptsize $v_{k-1}$};

        \draw[thick] (v0)--(v1) (v0) -- (vj1) (vl0)--(vl1) (vl0) -- (vj2) (v0)
            edge[dashed] (vl0) (v1) edge[dashed] (v2) (vl1) edge[dashed] (vl2)
            (vk1) -- (v1) (v1) edge[dashed]  (vl1) (vl1) -- (vk2);
        \draw[thick,dashdotted] plot [smooth, tension=2] coordinates {
            (v2.south) (3.0,-3.0) (vk2.south east) };
        \draw[thick,dashdotted] plot [smooth, tension=2] coordinates {
            (vk1.south) (-1.0,-2) (vj1.south west) };
        \draw[thick,dashdotted] plot [smooth, tension=2] coordinates {
            (vj2.south) (4.5,-2.5) (vl2.south east) };
    \end{tikzpicture}
\end{minipage}

\paragraph{Case 8.} The remaining case is when $k>j$. By taking the reverse order
$v'_i=v_{\lambda-i}$ for $i=0,\ldots,\lambda$, we have
$\lambda-k-1<\lambda-j-1$, so one of the previous subcases of Case 6 or Case 7 applies to
$s'=v'_0v'_1\ldots v'_{\lambda-1}v'_{\lambda}$. Clearly, the relevant properties
of $s_i$ are preserved by reversing the order of the indices.

\nocite{*}
\renewcommand*{\bibfont}{\small}
\printbibliography{}

\end{document}